\pgfplotsset{compat=1.18}
\newcommand{\mbR}{\mathbb{R}}
\newcommand{\E}{\mathsf{E}}
\newcommand{\1}{1\!\!\,{\rm I}}
\newcommand{\Pb}{\mathsf{P}}
\providecommand{\U}[1]{\protect\rule{.1in}{.1in}}
\theoremstyle{plain}
\newtheorem{theorem}{Theorem}
\newtheorem{corollary}[theorem]{Corollary}
\newtheorem{lemma}[theorem]{Lemma}
\newtheorem{remark}[theorem]{Remark}
\DeclareMathOperator{\sgn}{sgn}
\begin{document}

\title{The hard membrane process and transport barriers of turbulent flows}
%\author{Olga Aryasova, Franco Flandoli, Andrey Pilipenko}
%    Information for first author
\author{Olga Aryasova\footnote{Institute of Mathematics, Friedrich Schiller University Jena, Ernst--Abbe--Platz 2,
07743 Jena, Germany \emph{and} Institute of Geophysics, National Academy of Sciences of Ukraine, Palladin ave.\ 32, 03680 Kyiv-142,
Ukraine   \emph{and} Igor Sikorsky Kyiv Polytechnic Institute, Beresteiskyi ave.\ 37, 03056, Kyiv, Ukraine; \texttt{oaryasova@gmail.com}}, 
Franco Flandoli\footnote{Scuola normale superiore di Pisa, Piazza dei Cavalieri, 7, 56126 Pisa PI, Italy
\texttt{franco.flandoli@sns.it} },
Andrey Pilipenko
\footnote{University of Geneva, 
Section de math\'ematiques
UNI DUFOUR
24, rue du G\`en\`eral Dufour
Case postale 64
1211 Geneva 4, Switzerland \emph{and} Institute of Mathematics of  Ukrainian National Academy of Sciences
 Tereschenkivska str. 3, Kiev-4, 01601, Ukraine; \texttt{pilipenko.ay@gmail.com}}}

\maketitle
\section*{Abstract}

Motivated by the phenomenon of transport barriers in fusion plasma devices, we
write a mathematical model of heat dispersion in a turbulent fluid with a
transport barrier, properly idealized; in a scaling limit of the turbulence
model with separation of scales we get a heat equation with space-dependent
diffusion coefficient, poorly diffusing near the barrier; then we investigate
the scaling limit when the diffused barrier converges to a sharp separating
surface and describe the limit by means of the stochastic process called
Brownian motion with hard membrane. 

\section*{Keywords and classification}

heat equation; turbulence; transport barrier; Brownian motion with hard membrane

35K05, 76F45, 60J65

\section{Introduction}

\subsection{Physical motivation}

In confined fusion plasma devices like tokamaks, turbulence is always present
to some degree and it has the bad effect of dispersing heat and particles from
the central very hot and dense region to the boundary and then to the
surrounding areas and walls. Sometimes barriers arise in intermediate regions,
which reduce this dispersion. These barriers, also called zonal flows, are
thin layers of plasma where the fluid velocity is not turbulent as everywhere
else but ordered, roughly laminar, directed perpendicularly to the direction
of dispersion. See \cite{Diamond} for a review and \cite{Ding}, \cite{Garbet}
for more recent numerical simulations. 

There is still active research to understand how these barriers arise, whether
they can be triggered, how they evolve, the precise links with other plasma
dynamical features and parameters. Our aim here is not to contribute to these
difficult questions but only to rigorously define a mathematical model of a
sharp heat-diffusion barrier.

\subsection{Our aim}

The aim of this paper is to describe a sharp mathematical notion of
heat-diffusion barrier. More precisely, first we introduce the heat
diffusion-transport equation where the transport velocity field is a synthetic
turbulent fluid, a stochastic process with properties which are suitable for
mathematical investigation and relatively close to the reality of a turbulent
fluid with strong separation of scales. Very important, the velocity field
incorporates a transport barrier, in the sense that its turbulent features are
reduced and made laminar in a certain intermediate region.

Then we deduce, by a rigorous scaling limit theorem (Theorem 1 below), a heat
diffusion equation where the heat coefficient is space dependent and,
precisely, damped to zero in the neighborhood of an ideal surface (the surface
$x=0$ in the model below). It is very important that the space-dependent
coefficient appears in the heat operator in divergence form and this is
precisely motivated by the rigorous scaling limit theorem.

The model just obtained could be already considered a reasonable model of
diffusion with a transport barrier. In Section 2.1 we show by analytical
computations and by Figure 1 that the temperature profile corresponds to the
expectations:\ traditional diffusion between the domain boundaries and the
barrier, and a rapid temperature jump at the barrier. 

However, mathematically the object constructed until now is not a sharp
object. The barrier region is a neighbor of an ideal surface. We think it is
conceptually very interesting to understand the limit when this region becomes
a true surface. The barrier then, in the appropriate limit, is the separating
surface. We reach this goal by means of hard membrane. A probabilistic construction of {\it the Brownian motion with hard membrane} or the so called {\it snapping out Brownian motion (SNOB)} was introduced  in \cite{Lejay, ManPil} and can be described as follows. Assume that a (one-dimensional) process starts at the positive half-line and behaves as a  Brownian motion reflected at 0 into the positive half-line, until its local time at $0$  reaches a certain exponentially distributed random variable. At this instant, the SNOB process ``switches orientation from positive to negative'' and behaves as  a  Brownian motion reflected at 0 into the negative half-line, again  until its local time reaches another exponentially distributed random variable. Then it switches back to the positive orientation, reflects at 0 upward, and so on. It is assumed that the behavior of SNOB after each switch is independent of the past, and all exponential random variables are independent of each other  and also of the driving Brownian motion. The point 0 can be considered as a tight membrane that prevent particles from passing through in most collisions.  

The snapping out Brownian motion arises naturally    in \cite{ Lejay, ManPil} as a limit of a Brownian motions with a potential   localized in a small neighborhood of 0, which  strongly pushes the diffusion away from 0. Interestingly, the semigroup associated with SNOB corresponds to Newton's law of colling.   The semigroup approach for SNOB was introduced in \cite{BobrowskiSNOB1, BobrowskiSNOB2}. See also some applications of the process in chemistry \cite{ANDREWS, Tanner}, magnetic resonance imaging \cite{Fieremans},  and in modelling inhibitory synaptic receptor dynamic \cite{Bressloff}.

\subsection{The stochastic fluid model}%

%TCIMACRO{\FRAME{itbpF}{2.6493in}{1.6069in}{0in}{}{}{Figure}%
%{\special{ language "Scientific Word";  type "GRAPHIC";
%maintain-aspect-ratio TRUE;  display "USEDEF";  valid_file "T";
%width 2.6493in;  height 1.6069in;  depth 0in;  original-width 6.8668in;
%original-height 4.1483in;  cropleft "0";  croptop "1";  cropright "1";
%cropbottom "0";  tempfilename 'SVQ3E800.wmf';tempfile-properties "XPR";}} }%
%BeginExpansion
%{\includegraphics[
%natheight=4.148300in,
%natwidth=6.866800in,
%height=1.6069in,
%width=2.6493in
%]%
%{SVQ3E800.wmf}%
%}
%EndExpansion

Our idealized model is composed by the domain $D=\left[  -1,1\right]
\times\left[  0,2\pi\right]  $ with periodic conditions in the vertical
direction. The fluid occupies this domain and is given by a velocity field
$\mathbf{u}\left(  x,y,t\right)  $, $x\in\left[  -1,1\right]  $, $y\in\left[
0,2\pi\right]  $. In this section, we specify the properties of the velocity field, while the transport of heat in $D$
is considered in Section \ref{section:heat_equation} 
We assume that $\mathbf{u}\left(  x,y,t\right)$ satisfies the following conditions:
\begin{enumerate}
\item[C1)]
\[
\operatorname{div}\mathbf{u}\left(  x,y,t\right)  =0,
\]
\item[C2)]
periodic conditions in the vertical direction, 
\item[C3)] slip boundary condition at
the boundaries $x=\pm1$:%
\[
\mathbf{u}\left(  \pm1,y,t\right)  \cdot\mathbf{e}_{1}=0,
\]
where $\mathbf{e}_{1}=\left(  1,0\right)  $. 
\end{enumerate}
Moreover, we shall tune
$\mathbf{u}\left(  x,y,t\right)  $ so that it produces a transport barrier
along the vertical line $x=0$:
\begin{enumerate}
\item[C4)]
transport barrier around $x=0$.
\end{enumerate}

The fluid will have the form%
\[
\mathbf{u}\left(  x,y,t\right)  =\nabla^{\perp}\psi\left(  x,y,t\right),
\]
where $\nabla^{\perp}=\left(  -\partial_{y},\partial_{x}\right)  $ and
$\psi\left(  x,y,t\right)  $ is a given stream function, periodic in the
vertical direction and with constant values $\psi_{\pm1}$ at the boundaries
$x=\pm1$%
\[
\psi\left(  \pm1,y,t\right)  =\psi_{\pm1}\text{ for all }y,t.
\]
The conditions C1--C3 on $\mathbf{u}$\ are satisfied; in particular, since the
boundaries $x=\pm1$ are level curves of $\psi$, $\nabla\psi$ is directed as
$\mathbf{e}_{1}$, and $\mathbf{u}=\nabla^{\perp}\psi$ is orthogonal to
$\mathbf{e}_{1}$.

The stream function $\psi\left(  x,y,t\right)  =\psi_{N,\epsilon}\left(
x,y,t\right)  $ is parametrized by $N\in\mathbb{N}$ and $\epsilon\in\left(
0,1\right)  $ and given by
\[
\psi_{N,\epsilon}\left(  x,y,t\right)  =\sqrt{\kappa_{T}^{\epsilon}}%
\sum_{\mathbf{k}\in\mathbb{Z}_{0}^{2}}C\left(  \mathbf{k},N\right)
\psi_{\mathbf{k}}^{\epsilon}\left(  x,y\right)  \overset{\cdot
}{W}_{t}^{\mathbf{k}},
\]
where $\kappa_{T}^{\epsilon}>0$ is a parameter describing the strength of the  
turbulent diffusion, $\mathbb{Z}_{0}^{2}$ is defined below, $\left(  W_{t}^{\mathbf{k}}\right)  _{\mathbf{k}\in
\mathbb{Z}_{0}^{2}}$ is a family of independent Brownian motions,
$\psi_{\mathbf{k}}^{\epsilon}\left(  x,y\right)  $ are functions parametrized
by $\mathbf{k}\in\mathbb{Z}_{0}^{2}$ and $\epsilon\in\left(  0,1\right)  $
specified below and satisfying the boundary conditions mentioned above for
$\psi\left(  x,y,t\right)  $ (hence $\psi\left(  x,y,t\right)  $ satisfies
such boundary conditions), $C\left(  \mathbf{k},N\right)  $ are suitable
constants depending on $\mathbf{k}\in\mathbb{Z}_{0}^{2}$ and $N\in\mathbb{N}$
such that, for every $N\in\mathbb{N}$, the coefficients $\left(  C\left(
\mathbf{k},N\right)  \right)  _{\mathbf{k}\in\mathbb{Z}_{0}^{2}}$ are equal to
zero except for a finite number of indexes $\mathbf{k}$; hence the sum in the
equation is always finite.

For every $\epsilon\in\left(  0,1\right)  $, let $\chi_{\epsilon}\left(
x\right)  $ be a smooth function defined for $x\in\mathbb{R}$, equal to 1 in
$\left(  -2\epsilon,2\epsilon\right)  ^{c}$, such that $\chi_\epsilon\left(  \left(
-2\epsilon,2\epsilon\right)  \right)  \subset\left[  0,1\right]  $ and
$\chi_\epsilon\left(  0\right)  =0$. We shall use this function to define the barrier.
We shall make a very precise choice below, motivated by scaling limit
arguments, but let us keep it less determined for the time being.%

%TCIMACRO{\FRAME{itbpFU}{2.437in}{1.2998in}{0in}{\Qcb{Shape of $\chi_{\epsilon
%}$}}{}{Figure}{\special{ language "Scientific Word";  type "GRAPHIC";
%maintain-aspect-ratio TRUE;  display "USEDEF";  valid_file "T";
%width 2.437in;  height 1.2998in;  depth 0in;  original-width 6.1947in;
%original-height 3.2846in;  cropleft "0";  croptop "1";  cropright "1";
%cropbottom "0";  tempfilename 'SVQ9LC02.wmf';tempfile-properties "XPR";}} }%
%BeginExpansion
%{\parbox[b]{2.437in}{\begin{center}
%\includegraphics[
%natheight=3.284600in,
%natwidth=6.194700in,
%height=1.2998in,
%width=2.437in
%]%
%{SVQ9LC02.wmf}%
%\\
%Shape of $\chi_{\epsilon}$%
%\end{center}}}
%EndExpansion

Let $\chi_{\epsilon,bdr}\left(  x\right)  $ be a smooth function  with exactly the same properties as  $\chi_{\epsilon}\left(  x\right)  $ (but not
necessarily equal to $\chi_{\epsilon
}\left(  x\right)  $). We shall use this function to define the boundary layer
of the fluid at $x=\pm1$. This boundary layer does not play any role for our
purposes, but we have to include it for coherence, since the fluid should not
penetrate the lateral boundaries.

Call $\mathbb{Z}_{++}^{2}$ the set of all $\mathbf{k}=\left(  k_{1}%
,k_{2}\right)  \in{(\pi\mathbb{Z})\times\mathbb{Z}}$ such that $k_{1}>0, k_{2}>0$.
Similarly, $\mathbb{Z}_{+-}^{2}$ is defined by $k_{1}>0, k_{2}<0$; $\mathbb{Z}_{-+}^{2}$  by $k_{1}<0, k_{2}>0$; $\mathbb{Z}_{--}^{2}$  by $k_{1}<0, k_{2}<0$. Finally, set $\mathbb{Z}_{+}^{2}=\mathbb{Z}_{++}^{2}\cup \mathbb{Z}_{+-}^{2}$, $\mathbb{Z}_{-}^{2}=\mathbb{Z}_{-+}^{2}\cup \mathbb{Z}_{--}^{2}$, $\mathbb{Z}_{0}^{2}=\mathbb{Z}_{+}^{2}\cup \mathbb{Z}_{-}^{2}$.

Set (with the notation $\mathbf{x}=\left(  x,y\right)  $)%
\begin{align*}
\psi_{\mathbf{k}}^{\epsilon}\left(  \mathbf{x}\right)   &  =\frac{1}{\left\vert
\mathbf{k}\right\vert }\chi_{\epsilon,bdr}\left(  x-1\right)  \chi
_{\epsilon,bdr}\left(  x+1\right)  \sin\left(  \mathbf{k}\cdot\mathbf{x}%
\right)  \chi_{\epsilon}\left(  x\right)  \qquad\text{for }\mathbf{k}%
\in\mathbb{Z}_{+}^{2},\\
\psi_{\mathbf{k}}^{\epsilon}\left(  \mathbf{x}\right)   &  =\frac{1}{\left\vert
\mathbf{k}\right\vert }\chi_{\epsilon,bdr}\left(  x-1\right)  \chi
_{\epsilon,bdr}\left(  x+1\right)  \cos\left(  \mathbf{k}\cdot\mathbf{x}%
\right)  \chi_{\epsilon}\left(  x\right)  \qquad\text{for }\mathbf{k}%
\in\mathbb{Z}_{-}^{2}.%
\end{align*}
Here and below, 
$\left\vert
\mathbf{k}\right\vert$  denotes the Euclidean norm of 
$\mathbf{k}$, and 
$\mathbf{k}\cdot \mathbf{x}$ denotes the scalar product of 
$\mathbf{k}$ and $\mathbf{x}$.
These functions satisfy the boundary conditions, since all factors are
periodic in the vertical direction and the factor $\chi_{\epsilon,bdr}\left(
x-1\right)  \chi_{\epsilon,bdr}\left(  x+1\right)  $ vanishes at $x=\pm1$
(recall that $\chi_{\epsilon, bdr}\left(  0\right)  =0$). 
Therefore we have
\[
\mathbf{u}_{N,\epsilon}\left(  \mathbf{x},t\right)  =\sqrt{\kappa_{T}^{\epsilon}}%
\sum_{\mathbf{k}\in\mathbb{Z}_{0}^{2}}C\left(  \mathbf{k},N\right)
\mathbf{\sigma}_{\mathbf{k}}^{\epsilon}\left(  \mathbf{x}\right)  \overset{\cdot
}{W}_{t}^{\mathbf{k}},
\]
where, with the notation %
\[
\overline{\chi}_{\epsilon}\left(  x\right)  =\chi_{\epsilon, bdr}\left(
x-1\right)  \chi_{\epsilon, bdr}\left(  x+1\right)  \chi_{\epsilon}\left(
x\right),
\]
\begin{align*}
\sigma_{\mathbf{k}}^{\epsilon}\left(  \mathbf{x}\right)   &  =\frac{1}{\left\vert
\mathbf{k}\right\vert }\nabla^{\perp}\left[  \sin\left(  \mathbf{k}%
\cdot\mathbf{x}\right)  \overline{\chi}_{\epsilon}\left(  x\right)  \right]
\qquad\text{for }\mathbf{k}\in\mathbb{Z}_{+}^{2},\\
\sigma_{\mathbf{k}}^{\epsilon}\left(  \mathbf{x}\right)   &  =\frac{1}{\left\vert
\mathbf{k}\right\vert }\nabla^{\perp}\left[  \cos\left(  \mathbf{k}%
\cdot\mathbf{x}\right)  \overline{\chi}_{\epsilon}\left(  x\right)  \right]
\qquad\text{for }\mathbf{k}\in\mathbb{Z}_{-}^{2}.
\end{align*}

Let us interpret this model in terms of turbulence and transport barriers. The
terms $\frac{1}{\left\vert \mathbf{k}\right\vert }\sin\left(  \mathbf{k}%
\cdot\mathbf{x}\right)  $ and $\frac{1}{\left\vert \mathbf{k}\right\vert }%
\cos\left(  \mathbf{k}\cdot\mathbf{x}\right)  $ are a typical choice (see
\cite{Galeati}, or Chapters 3 and 4 of \cite{FlaLuongo} and references
therein) for the description of a turbulent fluid. There are of course other
choices and our results, in principle, do not depend on this factor of the
model, but the choice above is particularly good for the simplicity of the
final analytic expressions.

The turbulent fluid, however, should satisfy the slip boundary conditions at
$x=\pm1$, and this is caused by the cut-off function $\chi_{\epsilon
,bdr}\left(  x-1\right)  \chi_{\epsilon,bdr}\left(  x+1\right)  $.

Finally, essental for our model, there is a barrier at $x=0$. The barrier is
defined by a depletion of the vortical
structures produced by the combination of the functions $\frac{1}{\left\vert
\mathbf{k}\right\vert }\sin\left(  \mathbf{k}\cdot\mathbf{x}\right)  $ and
$\frac{1}{\left\vert \mathbf{k}\right\vert }\cos\left(  \mathbf{k}%
\cdot\mathbf{x}\right)  $. This depletion is given by the factor
$\chi_{\epsilon}\left(  x\right)  $. 

We shall be more specific in the choice of the turbulent part. The intuition
is that we model a fluid with small-scale (rapidly varying) turbulence in the
full domain (except a small boundary layer at $x=\pm1$ and except for the
transport barrier at $x=0$). Hence we introduce the set of indexes
\[
K_{N}=\left\{  \mathbf{k}\in\mathbb{Z}_{0}^{2}: {\sqrt{\left(\frac{k_1}{\pi}\right)^2+k_2^2}}%
\in[N,2N]\right\}
\]
and we limit the sum defining $\mathbf{u}_{N,\epsilon}\left(  x,y,t\right)  $
by taking
\begin{align*}
C\left(  \mathbf{k},N\right)   &  =0\text{ for }\mathbf{k}\in K_{N}^{c},\\
C\left(  \mathbf{k},N\right)   &  =\frac{1}{c_{N}}\text{ for }\mathbf{k}\in
K_{N}.
\end{align*}
Moreover, as we shall see below, in order to normalize the sum in Section 2, we choose
\[
c_{N}=\sqrt{\frac{Card\left(  K_{N}\cap\mathbb{Z}_{++}^{2}\right)  }{2}}.%
\]
Asymptotically as
$N\rightarrow\infty$,
\[
Card\left(  K_{N}\cap\mathbb{Z}_{++}^{2}\right)  \sim\frac{1}{4}\left[
4\pi N^{2}-\pi N^{2}\right]  =\frac34\pi{N^{2}}.
\]

\subsection{The heat equation}\label{section:heat_equation}

We consider diffusion and transport of heat in $D=\left[  -1,1\right]
\times\left[  0,2\pi\right]  $ according to the equation%

\begin{align*}
\partial_{t}T_{N,\epsilon}+\mathbf{u}_{N,\epsilon}\left(  x,y,t\right)
\cdot\nabla T_{N,\epsilon}  &  =\kappa_{\epsilon}\Delta T_{N,\epsilon},\\
T_{N,\epsilon}\left(  -1,y,t\right)   &  =T_{+}>0\text{ for all }y,t,\\
T_{N,\epsilon}\left(  1,y,t\right)   &  =0\text{ for all }y,t
\end{align*}
with a given initial condition $\theta_{0}\left(  x,y\right)  $, which however will
play only a transitory role, since we are mostly interested in the stationary
regime. The boundary condition $T_{N,\epsilon}\left(  -1,y,t\right)  =T_{+}$
corresponds to the fact that heat is continuouly supplied at $x=-1$, keeping
constant the temperature; similarly, heat is subtracted to the system at
$x=1$. Therefore, there is a continuous flux of heat from $x=-1$ to $x=1$.

Heat diffuses with the very small molecular diffusion constant $\kappa
_{\epsilon}>0$ and it is transported by the turbulent fluid velocity
$\mathbf{u}_{N,\epsilon}$ given above, incorporating a transport barrier at
$x=0$.

Equation above is a Stochastic Partial Differential Equation (SPDE), more
properly written in the form%
\[
dT_{N,\epsilon}+\sqrt{\kappa_{T}^{\epsilon}}\sum_{\mathbf{k}\in\mathbb{Z}%
_{0}^{2}}C\left(  \mathbf{k},N\right)  \mathbf{\sigma}_{\mathbf{k}}^{\epsilon
}\left(  x,y\right)  \cdot\nabla T_{N,\epsilon}\circ dW_{t}^{\mathbf{k}%
}=\kappa
_{\epsilon}\Delta T_{N,\epsilon}dt,
\]
where we adopt the Stratonovich integration, as in all the analogous
investigations (see \cite{FlaLuongo} and references therein), to preserve
conservation properties and Wong Zakai approximation meaning. Moreover, at the
rigorous level, to avoid working with Stratonovich integrals (which require
more detaled martingale properties of solutions, possible but annoynig), we
shall formulate the equation in It\^{o} form%
\begin{equation}
dT_{N,\epsilon}+\sqrt{\kappa_{T}^{\epsilon}}\sum_{\mathbf{k}\in\mathbb{Z}%
_{0}^{2}}C\left(  \mathbf{k},N\right)  \mathbf{\sigma}_{\mathbf{k}}^{\epsilon
}\left(  x,y\right)  \cdot\nabla T_{N,\epsilon}dW_{t}^{\mathbf{k}}%
=\kappa
_{\epsilon}\Delta T_{N,\epsilon}dt+\kappa_{T}^{\epsilon}\mathcal{L}%
_{N,\epsilon}T_{N,\epsilon}dt,\label{SPDE Ito form}%
\end{equation}
where $\mathcal{L}_{N,\epsilon}T_{N,\epsilon}dt$ is the It\^{o}-Stratonovich
corrector, discussed below.

We want to study, in sequence, two limits:

\begin{enumerate}
\item First, the limit as $N\rightarrow\infty$, the so-called diffusion limit.
It will generate the deterministic model%
\begin{align}
\partial_{t}T_{\epsilon} &  =\kappa_{\epsilon}\Delta T_{\epsilon}+\kappa_{T}^{\epsilon
}\mathcal{L}_{\epsilon}T_{\epsilon}, \label{eq:Teps}\\
T_{\epsilon}\left(  -1,y,t\right)   &  =T_{+}>0\text{ for all }y,t, \label{eq:Teps1}\\
T_{\epsilon}\left(  1,y,t\right)   &  =0\text{ for all }y,t,\label{eq:Teps2}
\end{align}
where $\mathcal{L}_{\epsilon}$ is some limit operator.

\item Second, the limit of the boundary problem \eqref{eq:Teps}-\eqref{eq:Teps2} as $\epsilon\rightarrow0$. This second limit is the
main contribution of the paper.
\end{enumerate}

In Section \ref{Section diffusion limit} we discuss the limit problem 1, which is relatively classical but necessary to motivate problem 2, and the specific form of the operator
 $\mathcal{L}_{\epsilon}$. Then, in Section
\ref{Section hard membrane}, we discuss the limit problem 2.%

%TCIMACRO{\FRAME{itbpFU}{2.4436in}{1.2619in}{0in}{\Qcb{Expected shape of
%$T_{\epsilon}$}}{}{Figure}{\special{ language "Scientific Word";
%type "GRAPHIC";  maintain-aspect-ratio TRUE;  display "USEDEF";
%valid_file "T";  width 2.4436in;  height 1.2619in;  depth 0in;
%original-width 7.0551in;  original-height 3.6253in;  cropleft "0";
%croptop "1";  cropright "1";  cropbottom "0";
%tempfilename 'SVQ9KP01.wmf';tempfile-properties "XPR";}} }%
%BeginExpansion
%{\parbox[b]{2.4436in}{\begin{center}
%\includegraphics[
%natheight=3.625300in,
%natwidth=7.055100in,
%height=1.2619in,
%width=2.4436in
%]%
%{SVQ9KP01.wmf}%
%\\
%Expected shape of $T_{\epsilon}$%
%\end{center}}}
%EndExpansion

\section{The diffusion limit\label{Section diffusion limit}}

For existence, uniqueness and regularity results on equation
(\ref{SPDE Ito form}), and for more details on the It\^{o}-Stratonovich
corrector, one can see several works, including \cite{FlaLuongo}, Chapter 3.
In particular, let us recall the following result. If the initial condition
$\theta_0$ is of class $L^{2}\left(  D\right)  $, and $\kappa_{\epsilon}>0$,
there exists a unique solution, continuous adapted process in $L^{2}\left(
D\right)  $, with trajectories of class
\[
T_{N,\epsilon}\in C\left(  \left[  0,T\right]  ;L^{2}\left(  D\right)
\right)  \cap L^{2}\left(  0,T;W^{1,2}\left(  D\right)  \right)
\]
for all $T>0$. Moreover, if $\theta_{0}\geq0$ then also the solution is non
negative for all $t>0$ and if $\theta_0\leq C$ the same bound holds for the
solution. When $\kappa_{\epsilon}=0$, we still have existence and uniqueness
in the class of continuous adapted process in $L^{2}\left(  D\right)  $,
missing only the property $L^{2}\left(  0,T;W^{1,2}\left(  D\right)  \right)
$. Most of the literature on this topic deals with homogeneous boundary conditions. 
To deal with non-homogeneous ones, see the proof of Theorem \ref{Thm diffusion limit} below.

Let us reconstruct in our particular case the computation of the
It\^{o}-Stratonovich corrector. Let us recall that, when a Stratonovich term
is linear in the state function (here $T$), of the form $\sum_{k}B_{k}T\circ
dW_{t}^{k}$ for certain linear operators $B_{k}$, the It\^{o}-Stratonovich
corrector is $\frac{1}{2}\sum_{k}B_{k}B_{k}Tdt$; in our case, since
$
\frac12\operatorname{div}\mathbf{\sigma}_{\mathbf{k}}^{\epsilon}=0$, it is easy to
recognize that $\frac{1}{2}\sum_{k}B_{k}B_{k}T$ has the form $\kappa_T^\epsilon\mathcal{L}_{N,\epsilon} T$, where
\begin{equation}\label{eq:L}
\left(  \mathcal{L}_{N,\epsilon}f\right)  \left(  \mathbf{x}\right)
=\operatorname{div}\left(  Q_{N,\epsilon}\left(  \mathbf{x},\mathbf{x}\right)
\nabla f\right)
\end{equation}
with
\begin{align*}
Q_{N,\epsilon}\left(  \mathbf{x},\mathbf{x}^{\prime}\right)   &  =\frac
{1}{2 c_{N}^{2}}\sum_{\mathbf{k}\in K_{N}}\mathbf{\sigma}_{\mathbf{k}}%
^{\epsilon}\left(  \mathbf{x}\right)  \otimes\mathbf{\sigma}_{\mathbf{k}%
}^{\epsilon}\left(  \mathbf{x}^{\prime}\right),  \\
\mathbf{x} &  =\left(  x,y\right)  ,\mathbf{x}^{\prime}=\left(  x^{\prime
},y^{\prime}\right)  \text{ in }D.
\end{align*}
Let us compute more precisely this matrix-valued function $Q_{N,\epsilon
}\left(  \mathbf{x},\mathbf{x^\prime}\right)  $. Then we have (always with the
notations $\mathbf{x}=\left(  x,y\right)  ,\mathbf{x}^{\prime}=\left(
x^{\prime},y^{\prime}\right)  $) 
\begin{align*}
Q_{N,\epsilon}\left(  \mathbf{x},\mathbf{x}^{\prime}\right)   &  =\frac12\frac
{1}{c_{N}^{2}}\sum_{\mathbf{k}\in K_{N}\cap \mathbb{Z}_+^2}\frac{1}{\left\vert \mathbf{k}%
\right\vert ^{2}}\nabla^{\perp}\left[  \sin\left(  \mathbf{k}\cdot
\mathbf{x}\right)  \overline{\chi}_{\epsilon}\left(  x\right)  \right]
\otimes\nabla^{\perp}\left[  \sin\left(  \mathbf{k}\cdot\mathbf{x}^{\prime
}\right)  \overline{\chi}_{\epsilon}\left(  x^{\prime}\right)  \right]  \\
&  +\frac12\frac{1}{c_{N}^{2}}\sum_{\mathbf{k}\in K_{N}\cap \mathbb{Z}_-^2}\frac{1}{\left\vert
\mathbf{k}\right\vert ^{2}}\nabla^{\perp}\left[  \cos\left(  \mathbf{k}%
\cdot\mathbf{x}\right)  \overline{\chi}_{\epsilon}\left(  x\right)  \right]
\otimes\nabla^{\perp}\left[  \cos\left(  \mathbf{k}\cdot\mathbf{x}^{\prime
}\right)  \overline{\chi}_{\epsilon}\left(  x^{\prime}\right)  \right].
\end{align*}

Decomposing the derivatives $\nabla^{\perp}$ of products, we get several
terms, precisely 8 terms, which are defined by formulas \eqref{eq:Q1}-\eqref{eq:Q8}  in the proof of Theorem \ref{Thm diffusion limit} below:%
\[
Q_{N,\epsilon}\left(  \mathbf{x},\mathbf{x}^{\prime}\right)  =Q_{N,\epsilon
}^{\left(  1\right)  }\left(  \mathbf{x},\mathbf{x}^{\prime}\right)
+....+Q_{N,\epsilon}^{\left(  8\right)  }\left(  \mathbf{x},\mathbf{x}%
^{\prime}\right)  .
\]
The main term, computed on the diagonal, is $Q^{(1)}+Q^{(5)}$  equal to 
\begin{align*}
\overline {Q}_{N,\epsilon}\left(  \mathbf{x},\mathbf{x}\right)   &
=\frac12\frac{1}{c_{N}^{2}}\sum_{\mathbf{k}\in K_{N}\cap \mathbb{Z}_+^2}\frac{\mathbf{k}^{\perp}%
\otimes\mathbf{k}^{\perp}}{\left\vert \mathbf{k}\right\vert ^{2}}%
\overline{\chi}_{\epsilon}^{2}\left(  x\right)  \cos^{2}\left(  \mathbf{k}%
\cdot\mathbf{x}\right)  \\
&  +\frac12\frac{1}{c_{N}^{2}}\sum_{\mathbf{k}\in K_{N}\cap \mathbb{Z}_-^2}\frac{\mathbf{k}^{\perp
}\otimes\mathbf{k}^{\perp}}{\left\vert \mathbf{k}\right\vert ^{2}}%
\overline{\chi}_{\epsilon}^{2}\left(  x\right)  \sin^{2}\left(  \mathbf{k}%
\cdot\mathbf{x}\right),
\end{align*}
where $\mathbf{k}^{\perp}=\left(  -k_{2},k_{1}\right)  $, which is equal to
\[
\frac{\overline{\chi}_{\epsilon}\left(  x\right)  ^{2}}{2c_{N}^{2}}%
\sum_{\mathbf{k}\in K_{N}\cap \mathbb{Z}_+^2}\frac{\mathbf{k}^{\perp}\otimes\mathbf{k}^{\perp}%
}{\left\vert \mathbf{k}\right\vert ^{2}}.
\]
Moreover, splitting $\mathbb{Z}_{+}^{2}$ in the set $\mathbb{Z}_{++}^{2}$ and 
$\mathbb{Z}_{+-}^{2}$ as said above, it is easy to see that
\[
\sum_{\mathbf{k}\in K_{N}\cap \mathbb{Z}_+^2}\frac{\mathbf{k}^{\perp}\otimes\mathbf{k}^{\perp}%
}{\left\vert \mathbf{k}\right\vert ^{2}}=Card\left(  K_{N}\cap\mathbb{Z}% 
_{++}^{2}\right)  I_{2},%
\]
where $I_{2}$ is the identity matrix in two dimensions. Therefore, recalling
the definition of $c_{N}$,
\begin{align*}
\overline{Q}_{N,\epsilon}\left(  \mathbf{x},\mathbf{x}\right)   &
=\frac{\overline{\chi}_{\epsilon}\left(  x\right)  ^{2}}{2c_{N}^{2}}Card\left(
K_{N}\cap\mathbb{Z}_{++}^{2}\right)  I_{2}\\
&  =\overline{\chi}_{\epsilon}\left(  x\right)  ^{2}I_{2}%
\end{align*}
so that
\[
\operatorname{div}\left(  \overline{Q}_{N,\epsilon}\left(  \mathbf{x},\mathbf{x}\right)  \nabla T_{N,\epsilon}\right)
=\operatorname{div}\left(  \overline{\chi}_{\epsilon}^{2}\left(  x\right)
\nabla T_{N,\epsilon}\right)  .
\]

The other terms $Q^{(2)}, Q^{(3)}, Q^{(4)}, Q^{(6)}, Q^{(7)}, Q^{(8)}$ contain a lot of terms, but they always miss
some derivatives of $\sin\left(  \mathbf{k}\cdot\mathbf{x}\right)  $,
$\cos\left(  \mathbf{k}\cdot\mathbf{x}\right)  $. Therefore they keep at least
one factor $\frac{1}{\left\vert \mathbf{k}\right\vert }$ inside the sum,
opposite to the first terms $Q^{(1)}$ and $Q^{(5)}$, where all factors $\frac{1}{\left\vert \mathbf{k}\right\vert }$
cancel out due to the derivatives of sine and cosine. The consequence is that
\[
\lim_{N\rightarrow\infty}Q_{N,\epsilon}\left(  \mathbf{x},\mathbf{x}\right)
=\lim_{N\rightarrow\infty}Q_{N,\epsilon}^{\left(  1\right)  }\left(
\mathbf{x},\mathbf{x}\right)  =2\overline{\chi}_{\epsilon}^{2}\left(
x\right)  I_{2}%
\]
and, in a suitable sense (e.g. of distributions), the operator $\frac{1}%
{2}\operatorname{div}\left(  Q_{N,\epsilon}\left(  \mathbf{x},\mathbf{x}%
\right)  \nabla f\right)  $ converges to $\operatorname{div}\left(
\overline{\chi}_{\epsilon}^{2}\left(  x\right)  \nabla f\right)  $, as
$N\rightarrow\infty$. 

The basic line of proof of the following theorem is classical, see, for instance, \cite{FlaLuongo}, Chapter 3, or the original paper \cite{Galeati}. {However, some details are new and more difficult, hence we give the proof in
Appendix, where we also postpone the definition of the function space
$\mathcal{D}\left( A_{\epsilon}^{2}\right)  $, a dense subspace of $L^{2}\left(
D\right)  $, see Step 0 of the Appendix.}
\begin{theorem}
\label{Thm diffusion limit} {Let $\theta_{0}\in L^{2}\left(  D\right)  $ and $\overline{\chi}_{\epsilon}$
be a smooth function. For every test function $\phi\in \mathcal{D}\left( A_{\epsilon
}^{2}\right)  $,} 
\[
\lim_{N\rightarrow\infty}\mathbb{E}\left[  \left(  \left\langle T_{N,\epsilon
}\left(  t\right)  ,\phi\right\rangle -\left\langle{T}_{\epsilon
}\left(  t\right)  ,\phi\right\rangle \right)  ^{2}\right]  =0,
\]
where $T_{\epsilon}\left(  x,y,t\right)  $ is the solution on $D$ of the
equation%
\begin{align}
\label{eq:theor_1}
\partial_{t}T_{\epsilon}\left(  x,y,t\right)  
&  =\kappa_{\epsilon}\Delta T_{\epsilon}\left(  x,y,t\right)  +\kappa
_{T}^{\epsilon}\operatorname{div}\left(  \overline{\chi}_{\epsilon}^{2}\left(
x\right)  \nabla T_{\epsilon}\left(  x,y,t\right)  \right),  \\
T_{\epsilon}\left(  -1,y,t\right)   &  =T_{+}>0\text{ for all }y,t  \label{bound_cond_left},\\
T_{\epsilon}\left(  1,y,t\right)   &  =0\text{ for all }y,t  \label{bound_cond_right},\\
\label{eq:theor_1_init}
T_{\epsilon}|_{t=0} &  =\theta_0%
\end{align}
with periodic boundary conditions in the vertical direction.
\end{theorem}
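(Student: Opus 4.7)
The plan is to follow the martingale/Galeati-type scheme. \textbf{Reduction and energy bounds.} I would first subtract a harmonic lift of the boundary data, e.g.\ $g(x)=T_+(1-x)/2$, and work with $\tilde T_{N,\epsilon}:=T_{N,\epsilon}-g$, which satisfies the SPDE (\ref{SPDE Ito form}) with zero Dirichlet data on $x=\pm1$ and an additional explicit forcing built from $g$ and $\mathbf{u}_{N,\epsilon}$. Applying It\^o's formula to $\|\tilde T_{N,\epsilon}\|_{L^2}^2$ and using the divergence-free property $\operatorname{div}\sigma_\mathbf{k}^\epsilon=0$ to cancel the stochastic advection cross-term against the It\^o--Stratonovich corrector, I obtain uniform-in-$N$ estimates
\[
\sup_{t\in[0,T]}\mathbb{E}\|\tilde T_{N,\epsilon}(t)\|_{L^2}^2 + \kappa_\epsilon\int_0^T\mathbb{E}\|\nabla\tilde T_{N,\epsilon}(s)\|_{L^2}^2\,ds \leq C_\epsilon.
\]

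\textbf{Weak form and drift convergence.} Fix $\phi\in\mathcal{D}(A_\epsilon^2)$. Testing (\ref{SPDE Ito form}) against $\phi$ and integrating by parts (the only boundary contribution is an explicit term in $T_+$ at $x=-1$, since $\sigma_\mathbf{k}^\epsilon$ vanishes on $x=\pm1$) gives
\[
\langle T_{N,\epsilon}(t),\phi\rangle=\langle\theta_0,\phi\rangle+\int_0^t\langle T_{N,\epsilon}(s),\kappa_\epsilon\Delta\phi+\kappa_T^\epsilon\mathcal{L}_{N,\epsilon}^{\ast}\phi\rangle\,ds+(\text{boundary flux})+M_t^{N,\epsilon},
\]
with $M^{N,\epsilon}$ a scalar martingale. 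The algebra already carried out in the excerpt identifies the diagonal piece $(Q^{(1)}+Q^{(5)})(x,x)=\bar\chi_\epsilon^2(x) I_2$, while each of $Q^{(2)},\ldots,Q^{(4)},Q^{(6)},\ldots,Q^{(8)}$ carries an additional factor $|\mathbf{k}|^{-1}$ and therefore contributes uniformly on $D$ at most $O(N^{-1})$; this yields $\|\mathcal{L}_{N,\epsilon}^{\ast}\phi-\operatorname{div}(\bar\chi_\epsilon^2\nabla\phi)\|_{L^2(D)}\to 0$ since $\phi\in\mathcal{D}(A_\epsilon^2)$ is smooth enough for the divergence to be applied classically.

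\textbf{Vanishing of the stochastic integral.} The quadratic variation is
\[
[M^{N,\epsilon}]_t=\kappa_T^\epsilon\sum_{\mathbf{k}\in K_N} C(\mathbf{k},N)^2\int_0^t|\langle T_{N,\epsilon}(s),\sigma_\mathbf{k}^\epsilon\cdot\nabla\phi\rangle|^2\,ds.
\]
A crude Cauchy--Schwarz estimate on each term gives only $O(1)$ in total. The decisive observation is that, after one integration by parts and an explicit expansion of $\sigma_\mathbf{k}^\epsilon$, each inner product reduces, modulo a remainder of order $|\mathbf{k}|^{-1}$, to a combination $\tfrac{k_1}{|\mathbf{k}|}\hat B(\mathbf{k})-\tfrac{k_2}{|\mathbf{k}|}\hat A(\mathbf{k})$ of sine and cosine Fourier coefficients of the fixed auxiliary fields $A:=T_{N,\epsilon}\bar\chi_\epsilon\partial_x\phi$, $B:=T_{N,\epsilon}\bar\chi_\epsilon\partial_y\phi$. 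Since $A,B$ are uniformly bounded in $H^1(D)$ by step~1, the tail-Parseval bound $\sum_{|\mathbf{k}|\geq N}|\hat h(\mathbf{k})|^2\leq N^{-2}\|h\|_{H^1}^2$ together with $|C(\mathbf{k},N)|^2\leq c_N^{-2}=O(N^{-2})$ produces $\mathbb{E}[[M^{N,\epsilon}]_T]=O(N^{-4})$. Setting $\Delta_N(t):=\langle T_{N,\epsilon}(t)-T_\epsilon(t),\phi\rangle$ and combining the previous two steps yields $\mathbb{E}\Delta_N(t)^2\leq C_\phi\int_0^t\mathbb{E}\Delta_N(s)^2\,ds+o(1)$, and Gronwall concludes. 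The main obstacle is precisely this martingale estimate: one must exploit simultaneously the high-frequency localization $|\mathbf{k}|\gtrsim N$ of the noise, the $H^1$ regularity of $T_{N,\epsilon}$ coming from $\kappa_\epsilon>0$, and the smoothness of $\phi\in\mathcal{D}(A_\epsilon^2)$.
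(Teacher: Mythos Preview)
Your reduction to homogeneous data, the energy estimate, and the identification of the drift remainder $R_{N,\epsilon}=Q_{N,\epsilon}-\overline\chi_\epsilon^2 I_2$ are correct and coincide with the paper. The gap is the final Gronwall step. Testing the difference of the two weak equations against a \emph{fixed} $\phi$ gives
\[
\Delta_N(t)=\int_0^t\bigl\langle T_{N,\epsilon}(s)-T_\epsilon(s),\,A_\epsilon\phi\bigr\rangle\,ds+\int_0^t\bigl\langle T_{N,\epsilon}(s),\,\kappa_T^\epsilon\operatorname{div}\bigl(R_{N,\epsilon}\nabla\phi\bigr)\bigr\rangle\,ds+M_t^{N,\epsilon},
\]
and the first integral is \emph{not} $\int_0^t\Delta_N(s)\,ds$: it is the pairing of the difference with the different test function $A_\epsilon\phi$. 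You therefore cannot bound $\mathbb E\Delta_N(t)^2$ by $C_\phi\int_0^t\mathbb E\Delta_N(s)^2\,ds+o(1)$; the scalar Gronwall inequality does not close. Iterating to $A_\epsilon^k\phi$ terminates after two steps because $\phi\in\mathcal D(A_\epsilon^2)$ only, and a Cauchy--Schwarz closure via $\|T_{N,\epsilon}-T_\epsilon\|_{L^2}\to0$ is not available (strong $L^2$ convergence is in fact not expected in this type of diffusion limit).

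The paper resolves this by working in the mild (Duhamel) formulation with the analytic semigroup $e^{tA_\epsilon}$: the difference $\tilde\theta_{N,\epsilon}(t)$ is written directly as the sum of a stochastic convolution and the remainder $\int_0^t e^{(t-s)A_\epsilon}B_{N,\epsilon}(\tilde T_{N,\epsilon}(s)+\gamma)\,ds$, with no residual $A_\epsilon$-term, so no Gronwall is needed. After pairing with $\phi$ and using self-adjointness, the key regularity input is $\sup_{s\le t}\|\nabla e^{(t-s)A_\epsilon}\phi\|_\infty<\infty$, and this is exactly where the hypothesis $\phi\in\mathcal D(A_\epsilon^2)$ is used. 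For the stochastic term the paper obtains an $O(N^{-2})$ bound using only the $L^2$ energy estimate and Bessel's inequality for the near-orthonormal family $\{\sigma_{\mathbf k}^{\epsilon,0}\}$; your $H^1$ Parseval-tail argument would give a sharper $O(N^{-4})$ but is not needed.
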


\subsection{Stationary solution of the diffusion limit equation}

The stationary problem, for the unknown function $\overline{T}_{\epsilon
}:D\rightarrow\mathbb{R}$, associated to the diffusion limit equation of the
previous theorem is%
\begin{align*}
\kappa_{\epsilon}\Delta\overline{T}_{\epsilon}+\kappa_{T}^{\epsilon
}\operatorname{div}\left(  \overline{\chi}_{\epsilon}^{2}\nabla\overline
{T}_{\epsilon}\right)   &  =0\\
\overline{T}_{\epsilon}\left(  -1,y\right)   &  =T_{+}>0\text{ for all }y\\
\overline{T}_{\epsilon}\left(  1,y\right)   &  =0\text{ for all }y
\end{align*}
with periodicity in the $y$ direction. Recall that the function $\overline{\chi}_\epsilon$ depends only on $x$. The solution is constant in $y$; hence it has the form $\overline{T}_{\epsilon}\left(  x\right)  $
 and can be computed analytically: for some constant $\phi_{q}^{\epsilon}$ (interpreted below) we
have%
\[
\left(  \kappa_{\epsilon}+\kappa_{T}^{\epsilon}\overline{\chi}_{\epsilon}%
^{2}\left(  x\right)  \right)  \overline{T}_{\epsilon}^{\prime}\left(
x\right)  =-\phi_{q}^{\epsilon}.
\]
It follows (recall that $\overline{\chi}_{\epsilon}^{2}\left(  \pm1\right)
=0$) that $\kappa_{\epsilon}\overline{T}_{\epsilon}^{\prime}\left(  -1\right)
=-\phi_{q}^{\epsilon}$ and $\kappa_{\epsilon}\overline{T}_{\epsilon}^{\prime
}\left(  1\right)  =-\phi_{q}^{\epsilon}$, hence also $\overline{T}_{\epsilon
}^{\prime}\left(  1\right)  =\overline{T}_{\epsilon}^{\prime}\left(
-1\right)  $. Then the stationary temperature is equal to%
\[
\overline{T}_{\epsilon}\left(  x\right)  =T_{+}-\int_{-1}^{x}\frac{\phi
_{q}^{\epsilon}}{\kappa_{\epsilon}+\kappa_{T}^{\epsilon}\overline{\chi
}_{\epsilon}^{2}\left(  x^{\prime}\right)  }dx^{\prime}.%
\]
From the boundary condition
$\overline{T}_{\epsilon}\left(  1\right)  =0$, we get%
\[
\phi_{q}^{\epsilon}=\frac{T_{+}}{\int_{-1}^{1}\frac{1}{\kappa_{\epsilon
}+\kappa_{T}^{\epsilon}\overline{\chi}_{\epsilon}^{2}\left(  x^{\prime
}\right)  }dx^{\prime}}.
\]

Consider, for graphical purposes, the following example (the function
$\overline{\chi}_{\epsilon}^{2}\left(  x\right)  $ deviates minimally from our
assumptions but just for numerical simplicity):%
\begin{equation}
\overline{\chi}_{\epsilon}^{2}\left(  x\right)  =\left(  \left(  \frac{2}{\pi
}\right)  ^{3}\arctan\left(  \frac{x}{\epsilon}\right)  \arctan\left(
10^{4}\left(  x-1\right)  \right)  \arctan\left(  10^{4}\left(  x+1\right)
\right)  \right)  ^{2}\label{example}.%
\end{equation}
The graph of $\overline{T}_{\epsilon}\left(  x\right) $ with $ \epsilon=0.2$, $\ T_{+}=2$, $\ \kappa_{T}^{\epsilon}=0.1$, $
\ \kappa_{\epsilon}=0.004$ is the magenta
profile in Figure 1. The main purpose of this paper, after having justified
the limit problem above by a turbulent fluid model, is to investigate the
limit as $\epsilon\rightarrow0$ of this temperature profile, from the
viewpoint of its probabilistic interpretation.%

%TCIMACRO{\FRAME{ftbpFU}{3.1327in}{3.1262in}{0pt}{\Qcb{Temperature profiles
%$\overline{T}\left(  x\right)  $ in three cases: $\epsilon=0.2$ (sky blue),
%$\epsilon=0.01$ (blue), both based on equation (\ref{example}), and the
%profile without barrier (grey).}}{}{Figure}%
%{\special{ language "Scientific Word";  type "GRAPHIC";
%maintain-aspect-ratio TRUE;  display "USEDEF";  valid_file "T";
%width 3.1327in;  height 3.1262in;  depth 0pt;  original-width 3.2895in;
%original-height 3.2831in;  cropleft "0";  croptop "1";  cropright "1";
%cropbottom "0";  tempfilename 'T0S57V01.bmp';tempfile-properties "XPR";}} }%
%BeginExpansion
\begin{figure}[ptb]%
\centering
\includegraphics[
%natheight=3.283100in,barrier
%natwidth=3.289500in,
%height=2in,
width=0.95\textwidth
]%
{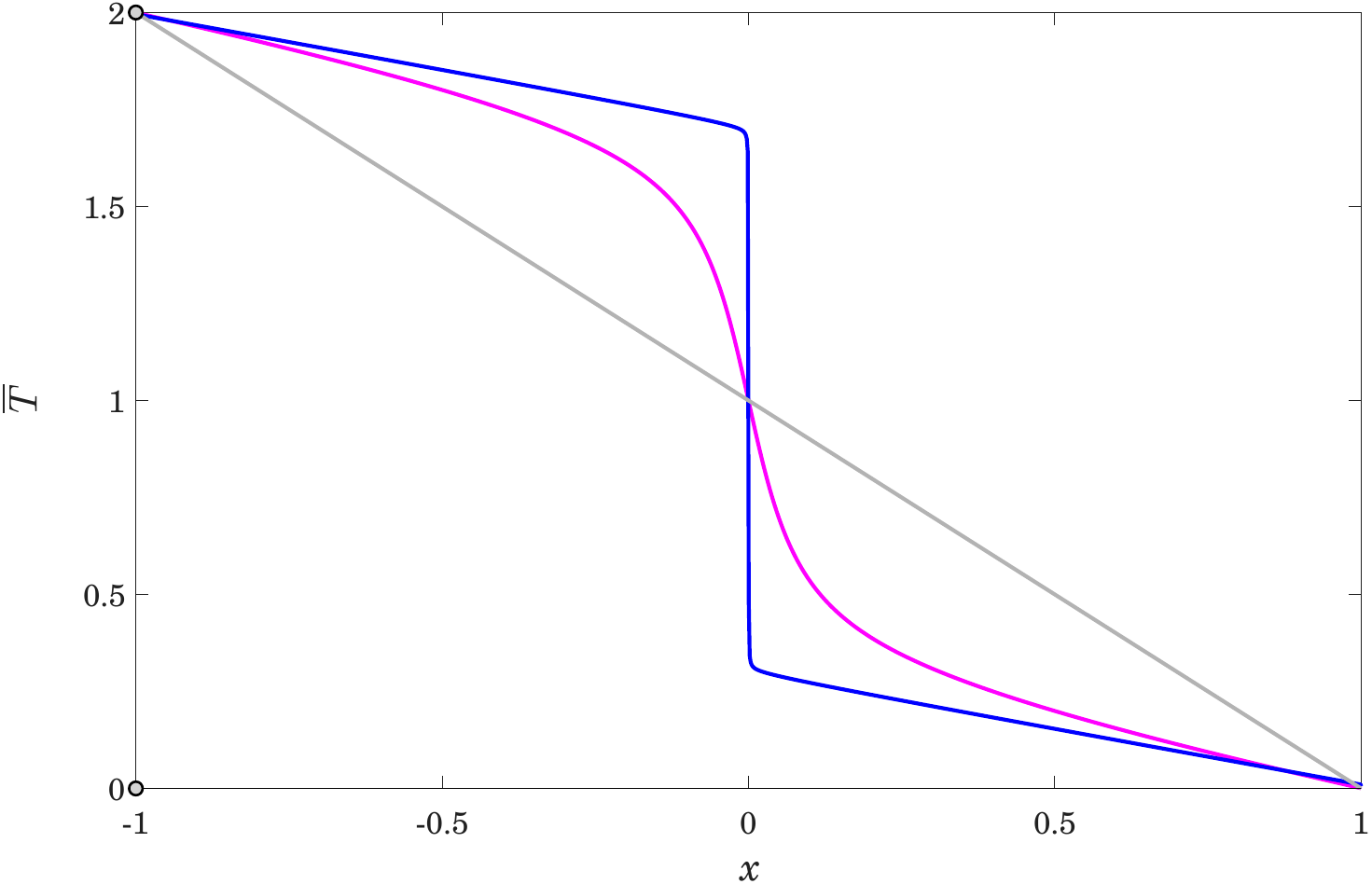}%
\caption{Temperature profiles $\overline{T}\left(  x\right)  $ in three cases:
$\epsilon=0.2$ (magenta), $\epsilon=0.01$ (blue), both based on equation
(\ref{example}), and the profile without barrier ($\overline\chi_\epsilon\equiv 1$, grey).}%
\end{figure}
%EndExpansion

As illustrated in Figure 1, we want to rescale the problem in such a way to
get a sharp profile (the blue one) corresponding to the soft one (the magenta
one) found in the example. Rigorously and in larger generality, this is done
in Theorem 10 below. Here we examine a simple case and argue only
heuristically: the case when, on a thin layer $x\in\left[  -\epsilon
,\epsilon\right]  $ the function $\overline{\chi}_{\epsilon}^{2}\left(
x\right)  $ behaves like $\left(  \frac{x}{\epsilon}\right)  ^{2}$.

We apply the general principle of \textit{equal heat fluxes between
compartments}, at equilibrium, principle that is easy understood by comparison
with queuing networks, or more rigorously from the fact seen above that the
heat flux $\left(  \kappa_{\epsilon}+\kappa_{T}^{\epsilon}\overline{\chi
}_{\epsilon}^{2}\left(  x\right)  \right)  \overline{T}_{\epsilon}^{\prime
}\left(  x\right)  $ is constant (the constant $\phi_{q}^{\epsilon}$ above).
We call $\phi_{q}^{\epsilon}$ this heat flux. We assume $\kappa_{\epsilon
}\ll\kappa_{T}^{\epsilon}$. On the interval $\left[  -1,-\epsilon\right]  $ the
heat flux $\phi_{q}^{\epsilon}$ is approximately equal to $\kappa
_{T}^{\epsilon}\left(  T_{+}-T_{\ast}\right)  $ where $T_{\ast}$ is the
temperature just on the left of $x=0$. Similarly on the interval $\left[
\epsilon,1\right]  $. But on the interval $\left[  -\epsilon,\epsilon\right]
$ the flux $\phi_{q}^{\epsilon}$ is approximately
\[
\phi_{q}^{\epsilon}\sim\frac{2T_{\ast}-T_{+}}{\int_{-\epsilon}^{\epsilon}%
\frac{1}{\kappa_{\epsilon}+\kappa_{T}^{\epsilon}\left(  \frac{x}{\epsilon
}\right)  ^{2}}dx}.
\]
Hence%
\[
\kappa_{T}^{\epsilon}\left(  T_{+}-T_{\ast}\right)  \sim\frac{2T_{\ast}-T_{+}%
}{\int_{-\epsilon}^{\epsilon}\frac{1}{\kappa_{\epsilon}+\kappa_{T}^{\epsilon
}\left(  \frac{x}{\epsilon}\right)  ^{2}}dx}.
\]
An easy calculation gives us%
\[
\Theta_{\epsilon}:=\int_{-\epsilon}^{\epsilon}\frac{\kappa_{T}^{\epsilon}%
}{\kappa_{\epsilon}+\kappa_{T}^{\epsilon}\left(  \frac{x}{\epsilon}\right)
^{2}}dx=\epsilon\frac{\sqrt{\kappa_{T}^{\epsilon}}}{\sqrt{\kappa_{\epsilon}}%
}2\arctan\left(  \sqrt{\frac{\kappa_{T}^{\epsilon}}{\kappa_{\epsilon}}%
}\right)  \sim\pi\epsilon\frac{\sqrt{\kappa_{T}^{\epsilon}}}{\sqrt
{\kappa_{\epsilon}}}%
\]
if $\kappa_{\epsilon}\ll\kappa_{T}^{\epsilon}$, and
\[
T_{\ast}\sim\frac{1+\Theta_{\epsilon}}{2+\Theta_{\epsilon}}T_{+}.
\]
We therefore see that in the case (with a free parameter $K>0$)%
\begin{equation}
\kappa_{\epsilon}=(K\epsilon)^{2}\kappa_{T}^{\epsilon}%
\label{scaling law}%
\end{equation}
we have a transport barrier as above, a ``partial'' barrier we could say, with
$\Theta_{\epsilon}=\frac\pi K$ and
\[
\frac{1}{2}<T_{\ast}\sim\frac{K+\pi }{2K+\pi }T_{+}<T_{+}.
\]
On the contrary, if $\frac{\kappa_{\epsilon}}{\epsilon^{2}\kappa_{T}%
^{\epsilon}}\rightarrow0$, we expect a full barrier: $T_{\ast}=T_{+}$, no heat
transport. While, if $\frac{\kappa_{\epsilon}}{\epsilon^{2}\kappa
_{T}^{\epsilon}}\rightarrow\infty$, there is no barrier, $T_{\ast}=\frac{1}%
{2}T_{+}$.

The scaling law is numerically confirmed: in the case of example
(\ref{example}) if we take%
\[
\epsilon=0.01,\qquad\kappa_{\epsilon}=\epsilon^{2}\kappa_{T}^{\epsilon},%
\]
we get the blue profile of Figure 2.

We therefore have a simple formula for the barrier height as a function of the
basic model parameters $\kappa_{\epsilon}$ and $\kappa_{T}^{\epsilon}$.
Moreover, we have a formula for the heat flux of the global system, which
indicates how much energy is ``saved'' by the barrier. The heat flux in the case
of turbulence without a barrier is equal to $\kappa_T^\epsilon\frac{T_{+}%
}{2}$. With a sharp barrier, it is equal to
\[
\phi_{q}^{\epsilon}\sim\kappa_{T}^{\epsilon}\left(  T_{+}-T_{\ast}\right)
\sim\kappa_{T}^{\epsilon}\frac{T_{+}K}{2K+\pi}<\kappa_{T}^{\epsilon}%
\frac{T_{+}}{2}.
\]
The barrier reduces heat transport, and the effect becomes stronger as 
$K$ decreases.

\section{Stochastic interpretation and the limit process\label{Section hard membrane}}\label{section:stoch interpret}

{Let us stress from the very beginning that we extend the results of this
section also to special cases where the function $\overline{\chi}_{\epsilon
}^{2}$ is not smooth, with a specific H\"{o}lder singularity. We think that
this illuminates more clearly the conditions for convergence as $\epsilon
\rightarrow0$.}

Consider the following SDE 

\begin{eqnarray}
\label{SDE comp 1}
dX_{\epsilon}\left(  t\right)  &=&2\kappa_{T}^{\epsilon}\overline{\chi
}_{\epsilon}\left(  X_{\epsilon}\left(  t\right)  \right)  \overline{\chi
}_{\epsilon}^{\prime}\left(  X_{\epsilon}\left(  t\right)  \right)  
dt+\sqrt{2\left(\kappa_{\epsilon}+\kappa_{T}^{\epsilon}\overline{\chi}_{\epsilon}^{2}\left(
X_{\epsilon}\left(  t\right)  \right) \right)}dw_{1}\left(  t\right), \ \ \\
dY_{\epsilon}\left(  t\right) &=& \sqrt{2\left(\kappa_{\epsilon}+\kappa_{T}^{\epsilon}\overline{\chi}_{\epsilon}^{2}\left(
X_{\epsilon}\left(  t\right)  \right) \right)}dw_{2}\left(  t\right),  \label{SDE comp 2}%
\end{eqnarray}
where $\left(  w_{1}\left(  t\right)  ,w_{2}\left(  t\right)  \right)
_{t\geq0}$ is a two-dimensional Wiener process. We consider 
$Y_{\epsilon}(t)$  and $Y_{\epsilon}(t)+2\pi n$ to be equivalent (or identified) for all 
$n\in\mathbb Z$.

It is well known that there is a close relationship between the PDE \eqref{eq:theor_1} and the SDE \eqref{SDE comp 1}.

We assume that the initial condition $\theta_0\in L^2(D)$ is smooth enough.
Since the function  $\overline{\chi}_{\epsilon}$ is smooth, there exists a unique solution to the problem \eqref{eq:theor_1}--\eqref{eq:theor_1_init}, which is smooth (e.g.  \cite{Ladyzhenskaya et al}).
 Then the solution to \eqref{eq:theor_1}--\eqref{eq:theor_1_init} can be expressed as  follows:
\begin{equation} \label{eq:sol_expect}
T_\epsilon(x,y,t)=\E_{x,y} \left[\theta_0(X_\epsilon(t), Y_\epsilon(t))\1_{\tau_\epsilon>t}\right]+ T_+\Pb_{x,y}(\tau_\epsilon\leq t, \ X_\epsilon(\tau_\epsilon)=-1)),
\end{equation}
where   $\tau_\epsilon:=\inf\{t\geq 0\colon |X_\epsilon(t)|=1\}.$
The stopping time $\tau_\epsilon$ is finite a.s. because the diffusion coefficient is non-degenerate. It is easy to see that    
\begin{equation}\label{eq:lim_T}
T_\epsilon(x,y):=\lim_{t\to\infty}T_\epsilon(x,y,t)= T_+\Pb_{x,y}( X_\epsilon(\tau_\epsilon)=-1).
\end{equation}
Moreover, it is well known that $T_{\epsilon}$ satisfies a stationary equation
\begin{equation}
\kappa_{\epsilon}\Delta T_{\epsilon}(x,y)+\kappa_{T}^{\epsilon}\operatorname{div}%
\left(  \overline{\chi}_{\epsilon}^{2}\left(  x\right)  \nabla T_{\epsilon
}(x,y)\right) =0. \label{stationary PDE}%
\end{equation}

\begin{remark}
    Formulas \eqref{eq:sol_expect}, \eqref{eq:lim_T}  remain valid if the function $\overline\chi_\epsilon$ is H\"older continuous.
\end{remark}

Note that the equation for 
$X_\epsilon$  does not depend on 
$Y_\epsilon$. Hence, to study the limiting behavior of 
$T_\epsilon$, it suffices to investigate the equation for $X_\epsilon$ alone. It is convenient to perform a  time transformation (see, for example, \cite{IkedaWatanabe1981}, Ch. III). Consider a continuous additive functional
$$
\beta_t:=\beta_t^\epsilon:=2\int_0^t \left(\kappa_{\epsilon}+\kappa_{T}^{\epsilon}\overline{\chi}_{\epsilon}^{2}\left(
X_{\epsilon}\left(  s\right)  \right) \right)ds.
$$
Since 
$\beta_t$
is a continuous, strictly increasing function, there exists an inverse function
$$
\alpha_t:=\alpha_t^\epsilon:=\inf\{s: \beta_s>t\}.
$$
We introduce the process $\left(\tilde X_\epsilon(t), \tilde Y_\epsilon(t)\right)_{t\geq0}:=\left( X_\epsilon(\alpha_t), Y_\epsilon(\alpha_t)\right)_{t\geq0}$. It can be shown (e.g. \cite{IkedaWatanabe1981}) that $\left(\tilde X_\epsilon(t), \tilde Y_\epsilon(t)\right)_{t\geq0}$  satisfies the SDE 
\begin{eqnarray}\label{eq:prelimit hard 1}
d\tilde X_{\epsilon}\left(  t\right)  &=&\frac{\kappa_{T}^{\epsilon}\overline{\chi
}_{\epsilon}\left(  \tilde X_{\epsilon}\left(  t\right)  \right)  \overline{\chi
}_{\epsilon}^{\prime}\left(  \tilde X_{\epsilon}\left(  t\right)  \right)  }%
{\kappa_{\epsilon}+\kappa_{T}^{\epsilon}\overline{\chi}_{\epsilon}^{2}\left(
\tilde X_{\epsilon}\left(  t\right)  \right)  }dt+d\tilde w_{1}\left(  t\right),
\\ \label{eq:prelimit hard 2}
d\tilde Y_{\epsilon}(t) &=& d\tilde w_{2}(t), %
\end{eqnarray}
where $\tilde w_i(t)=\int_0^{\alpha_t}\sqrt{2\left(\kappa_{\epsilon}+\kappa_{T}^{\epsilon}\overline{\chi}_{\epsilon}^{2}\left(
X_{\epsilon}\left(  s\right)  \right) \right)}dw_{i}(s)$, $i=1,2$, is a new Wiener process.
We see  that the process $(\tilde X_\epsilon(t))_{t\geq 0}$ behaves like a Brownian motion everywhere except in the neighborhoods $[-2\epsilon, 2\epsilon]$,  $[-1,-1+2\epsilon]$ and $[1-2\epsilon,1]$, where it receives local perturbations. 

Usually parameters $\kappa_\epsilon$ and $\kappa^\epsilon_T$ are small. We  assume that $\kappa^\epsilon_T/\kappa_\epsilon$ converges to $\infty$ as $\epsilon\to0.$ We will show the limit dynamics for $\tilde X_\epsilon$ exists and is non-trivial as $\epsilon\to0$, unlike  the limit of $X_\epsilon,$ which is a constant. 
The probabilities of hitting $-1$ before $1$ are the same for processes $X_\epsilon$ and $\tilde X_\epsilon$.   However, the investigation of the process  $\tilde X_\epsilon$ gives clear insight into the dynamics of plasma.

The main result of this section is the following. Under certain assumptions on the coefficients, we prove that the sequence $\tilde X_\epsilon$ converges to a diffusion in $[-1,1]$ with a hard membrane located at 0. This membrane does not allow a particle to penetrate immediately after the first hitting, but only after a certain number of visits. We introduce this process rigorously in Subsection \ref{sec:hard membrane}.    Moreover, we show that the limit of the corresponding probabilities $\Pb_{x,y}( \tilde X_\epsilon(\tilde \tau_\epsilon)=1)  $ as $\epsilon\to0$ coincides with the corresponding probability for Brownian motion with a hard membrane, which is explicitly calculated in the general case in Subsection \ref{sec:HM hitting probab}.

\subsection{A hard membrane}\label{sec:hard membrane}
In this section, we collect some facts about Brownian motion with a hard membrane (snapping out Brownian motion). Further information can be found in \cite{Lejay, ManPil}.

To introduce this process, let us first recall that  Brownian motion reflected at 0  into the positive   half-line is a     process   satisfying the following SDE
\begin{equation}\label{eq:reflection}
     w^+(t) = w(t) +  l^+(t),
\end{equation}
     where $w$ is a Brownian motion and  $l^+(t)=\lim_{\epsilon\to0+}\frac{1}{   2\varepsilon}\int_0^t\1_{|w^+(s)|\leq \varepsilon} ds$ is the symmetric local time of $w^+$ at 0. Recall that the distributions of the modulus of Brownian motion and of  reflected Brownian motion coincide. Existence, uniqueness, and other properties of reflected processes  can be found, for example, in \cite{Pilipenko2014} . The solution to \eqref{eq:reflection} is a strong Markov process whose increments coincide with those of $w$ when $w^+$ belongs to $(0,\infty)$. If the initial value  $w^+(0)$ is non-negative, then $w^+$ is a non-negative process and the pair $(w^+,l^+)$ solves the Skorokhod reflection problem. Moreover, the following formula holds: 
     $$
     l^+(t)=-\min_{s\in[0,t]}w(s)\wedge 0.
     $$

     Similarly, the equation for Brownian motion reflected into the negative half-line is
     \[
      d w^-(t) = dw(t) - d l^-(t),
     \]
where $l^-(t)=\lim_{\epsilon\to0+}\frac{1}{   2\varepsilon}\int_0^t\1_{|w^-(s)|\leq \varepsilon} ds$.

Let $X$ be a stochastic process that behaves like a  Brownian motion reflected  into  either the positive  or negative half-line, until its local time at $0$ reaches an exponentially distributed random variable. At that moment, it is killed and immediately reborn as a Brownian motion reflected into the opposite half-line, independently of the past. When the local time at 0 reaches another (independent) exponentially distributed random variable, the process is once more  killed and immediately  reborn as a Brownian motion reflected into  the opposite half-line, and so on. {In other words, $X$ behaves like an elastic Brownian motion on either the positive or negative side. At the moment of killing, it is immediately reborn at 0 and evolves, independently of the past, as an elastic Brownian motion on the opposite side of the real line until the next killing time, and so on. For the definition and further properties of elastic Brownian motion, see  \cite{ItoMcKean1965}.}

Let us formalize the construction.
Consider a continuous-time Markov process $N(t), t\geq 0,$ taking values in $\{-1,1\}$, which  is independent of a standard Wiener process  $(w(t), \mathcal{F}_t, t\geq 0)$, $w(0)=0$. Denote the transition intensities of $N$  by $\beta^+:=\lambda_{1,-1}, \beta^-:=-\lambda_{-1,1}$. It is well known that times between transitions from $1$ to $-1$ and from $-1$ to $1$ are independent exponentially distributed random variables: $(\xi_k^+)\sim \operatorname{Exp}(\beta^+), (\xi_k^-)\sim \operatorname{Exp}(\beta^-)$. 

Assume also that $\sigma(N(t), t\geq 0)\subset \mathcal{F}_0,$ and that the random variable $X(0)$ is  such that  $N(0)=1$  if $X(0)>0$   and $N(0)=-1$   if $X(0)<0$. Moreover, assume that $X(0)$ is independent of both the future jumps of $N$ and the Wiener process $w.$

We say that a continuous stochastic process $(X(t), \mathcal{F}_t, t\geq 0)$ is {\it a Brownian motion with a hard membrane} located at 0 and permeability parameters $\beta^\pm$, if $X$ is a solution to the SDE
\begin{equation}
    \label{eq:hard}
    X(t)=X(0)+w(t)+\int_0^t N(l(s)) d l(s),
\end{equation}
where $l(t):=\lim_{\varepsilon\to0+}(2\varepsilon)^{-1}\int_0^t\1_{|X(s)|\leq \varepsilon} ds$ is a local time of $X$ at 0.

It follows from the uniqueness of the solution to the equation for reflected Brownian motion that there exists a unique solution to \eqref{eq:hard}. Moreover, for all $t\geq 0$, it holds that 
\[
N(l(t)) \sgn(X(t))\geq 0.
\] 
It can be shown that the pair $(X(t), N(l(t))), t\geq 0,$ is a strong Markov process, whereas $X$ alone is, in general, is a Markov process, which does not possess the strong Markov property. The failure of the strong Markov property stems from the  fact that if $X(t)=0$, then in order to determine the future evolution of the process, one needs to know from which side of the origin the process arrived at 0 at time 
$t$.

\begin{remark} \label{rem:SNOB2lines}
    Instead of considering the strong Markov pair $(X(t), N(l(t))), t\geq 0,$ we can use an equivalent construction.  Recall that if $X(t)\neq 0,$ then $ N(l(t))=\sgn(X(t))$ and the pair $(X(t), N(l(t))$ is uniquely determined by its first coordinate. 
    We set 
    \[\tilde X(t):=\begin{cases}
        X(t),& \text{ if } X(t)\neq0,\\
        +0 ,& \text{ if } X(t)=0 \text{ and } N(X(t))=1,\\
        -0,& \text{ if } X(t)=0 \text{ and } N(X(t))=-1.
    \end{cases}
    \]
    Then $\tilde X$ is a strong Markov process on the disjoint union of two half-lines $(-\infty,-0]\cup[0+,\infty),$ where $\pm0$ are treated as two distinct points. Note also that
    \[X(t) =\begin{cases}
        \tilde X(t),& \text{if } \tilde X(t)\neq0,\\
        0 ,& \text{if } \tilde  X(t)=0 \pm.
    \end{cases}
    \]
    \end{remark}

\begin{remark}
    We introduce the filtration $(\mathcal{F}_t)$ primarily to ensure that standard results from the theory of SDEs can be applied {and to make the definition more concise}. However, the process 
$X$ can also be constructed explicitly by solving the Skorokhod reflection problem (see \cite{ManPil}) {and switching at the moments when the increments of its local time reach certain exponentially distributed random variables.}  %In this case,  the natural filtration generated by $X$ itself may be used instead. It can also be shown that the process $w$ is  a Brownian motion with respect to the filtration $(\mathcal{F}_t)$   as well.
\end{remark}

The following result shows that a Brownian motion process with a hard membrane can be obtained as the limit of Brownian motions that are  locally perturbed in a neighborhood of 0. 
Consider a sequence of  SDEs
\begin{eqnarray*}\label{eq_main}
dX_\epsilon(t)&=&a_\epsilon(X_{\epsilon}(t))dt +dw(t),\ t> 0,\\
X_{\epsilon}(0)&=&x,
\end{eqnarray*}
where $x\neq 0,$ $a_\epsilon$ is a continuous function with compact support $ [-\epsilon,\epsilon] $.  
Assume also that $\sgn(x) a_\epsilon(x)\geq 0$, i.e., the drift pushes $X_\varepsilon$ away from 0.

{Set $\sigma_{x}^{(\epsilon)}:=\inf\{t\geq 0\ : \ X_\epsilon(t)=x\}$.
\begin{theorem} \label{th: Pilip1}
\begin{enumerate}
    \item 
  Assume that $x_\epsilon\in[-\epsilon,\epsilon]$ are such that there exists the limit
\[
\lim_{\epsilon\to0}\epsilon^{-1}\Pb\Big\{\sigma_{x_\epsilon}^{(\epsilon)}< \sigma_{2\epsilon}^{(\epsilon)} \  | \ X_\epsilon(0)=\epsilon \Big\} =\beta^+\in(0,\infty).
\]
If $ X_\epsilon(0)=x>0,$ then
\[\Big(  X_\epsilon(\cdot\wedge \sigma_{x_\epsilon}^{(\epsilon)}), \sigma_{x_\epsilon}^{(\epsilon)}\Big) \Rightarrow \Big(|w|(\cdot\wedge \sigma_{+}),\sigma_+\Big), \epsilon\to0,\]
     where $w$ is a Brownian motion started from x, $\sigma_+=\inf\{t\geq 0 \colon l(t)\geq \operatorname{Exp}(\beta^+)\},$ $l$ is the symmetric local time of $w$ at 0, $\operatorname{Exp}(\beta^+)$ is an exponential random variable with parameter $\beta^+$, independent of $w.$
\item  Assume that $x_\epsilon\in[-\epsilon,\epsilon]$ are such that there exists the limit
\[
\lim_{\epsilon\to0}\epsilon^{-1}\Pb\Big\{\sigma_{x_\epsilon}^{(\epsilon)}< \sigma_{-2\epsilon}^{(\epsilon)} \  | \ X_\epsilon(0)=-\epsilon \Big\} =\beta^-\in(0,\infty).
\]
If $ X_\epsilon(0)=x<0,$ then
\[\Big(  X_\epsilon(\cdot\wedge \sigma_{x_\epsilon}^{(\epsilon)}), \sigma_{x_\epsilon}^{(\epsilon)}\Big) \Rightarrow \Big(-|w|(\cdot\wedge \sigma_{-}),\sigma_-\Big), \epsilon\to0,\]
     where $w$ is a Brownian motion started from x, $\sigma_-=\inf\{t\geq 0 \colon l(t)\geq \operatorname{Exp}(\beta^-)\},$ $l$ is the symmetric local time of $w$ at 0, and $\operatorname{Exp}(\beta^-)$ is an exponential random variable with parameter $\beta^-$, independent of $w.$
\item 
     If $ X_\epsilon(0)=x\neq 0,$ and there exist the limits 
\[
\lim_{\epsilon\to0}\epsilon^{-1}\Pb\Big\{\sigma_{\mp\epsilon}^{(\epsilon)}< \sigma_{\pm2\epsilon}^{(\epsilon)} \  | \ X_\epsilon(0)=\pm\epsilon \Big\} =\beta^\pm\in(0,\infty),
\]
then    $X_\epsilon$ converges in distribution to a Brownian motion with a hard membrane, started from $0$, with parameters $\beta^\pm.$
 \end{enumerate}   
 \end{theorem}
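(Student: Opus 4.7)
The three parts share a common structure; my plan is to establish Part 1 in detail, after which Part 2 follows by the symmetry $X_\epsilon\mapsto -X_\epsilon$ and Part 3 is obtained by combining the two sides. Since the drift $a_\epsilon$ is supported in $[-\epsilon,\epsilon]$, the process $X_\epsilon$ coincides with the driving Brownian motion on every excursion out of this shrinking interval. The natural approach is an excursion-based analysis: I decompose the trajectory into macroscopic excursions outside $[-\epsilon,\epsilon]$ and microscopic attempts inside $[-\epsilon,\epsilon]$, apply the strong Markov property at each entry into $\pm\epsilon$, and identify each microscopic attempt as an independent Bernoulli trial with success probability $p_\epsilon = \beta^+\epsilon + o(\epsilon)$.

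Specifically, I introduce the nested stopping times $\rho_0^\epsilon=\inf\{t\geq 0: X_\epsilon(t)=\epsilon\}$ and, for $k\geq 0$, $\eta_k^\epsilon=\inf\{t\geq\rho_k^\epsilon: X_\epsilon(t)\in\{x_\epsilon,2\epsilon\}\}$, with $\rho_{k+1}^\epsilon=\inf\{t\geq\eta_k^\epsilon:X_\epsilon(t)=\epsilon\}$ on the event $\{X_\epsilon(\eta_k^\epsilon)=2\epsilon\}$. By the strong Markov property together with the hypothesis on $\beta^+$, each descent below $\epsilon$ is an independent Bernoulli trial with success probability $p_\epsilon\sim\beta^+\epsilon$, so the number $N_\epsilon$ of attempts until first success is geometric with parameter $p_\epsilon$, and $p_\epsilon N_\epsilon\Rightarrow \operatorname{Exp}(1)$. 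I then couple $X_\epsilon$ with a reflected Brownian motion $|w|$ started at $x$: between $\rho_k^\epsilon$ and $\rho_{k+1}^\epsilon$, the above-$\epsilon$ skeleton of $X_\epsilon$ matches an excursion of $|w|$ from $\epsilon$ down to $0$ and back to $\epsilon$, and since the sojourn of $X_\epsilon$ inside $[-\epsilon,\epsilon]$ during an unsuccessful attempt is $O(\epsilon^2)$ in probability uniformly in $k$, the coupling yields uniform convergence of $X_\epsilon$ to $|w|$ on compact time intervals up to the success time.

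The crucial calibration step is to match the geometric count $N_\epsilon$ with the local time of $|w|$. By It\^o's excursion theory, the Poisson intensity (relative to symmetric local time $l$) of excursions of $|w|$ from $0$ whose maximum exceeds $\epsilon$ is of order $\epsilon^{-1}$, so the number of such excursions before $l$ reaches a level $L$ grows like $L/\epsilon$ times a fixed constant determined by the normalization. Combined with $N_\epsilon\sim(\beta^+\epsilon)^{-1}$, this yields $l(\sigma_{x_\epsilon}^{(\epsilon)})\Rightarrow\operatorname{Exp}(\beta^+)$, independently of $w$; the asymptotic independence is inherited from the conditional independence of trial outcomes from the future skeleton, secured by the strong Markov property at each $\rho_k^\epsilon$. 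Parts 2 and 3 then follow: Part 2 by the reflection symmetry, and Part 3 by concatenating one-sided convergences along the sequence of successful crossings, since after each success the process restarts afresh on the opposite side, which is precisely the hard membrane construction of Subsection \ref{sec:hard membrane} with switching rates $\beta^\pm$.

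The main obstacle is twofold: first, ensuring asymptotic independence between $N_\epsilon$ and the reflected Brownian trajectory uniformly in $\epsilon$, and second, controlling the time-change that translates the count of attempts into the symmetric local time with the correct constant. The strong Markov property at each $\rho_k^\epsilon$ handles the independence of individual trials from the past, but justifying that the microscopic sojourns in $[-\epsilon,\epsilon]$ are negligible in the Skorokhod topology, and that the random local time increments per macroscopic excursion average out to the claimed exponential limit, requires uniform tail estimates on excursion durations and a functional law of large numbers for the excursion point process of $|w|$.
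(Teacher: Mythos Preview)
The paper does not give a self-contained proof of this theorem: it simply states that ``the proof follows from Theorem 1 and the arguments of Theorem 2 in \cite{ManPil}'', noting that those results were stated for $x_\epsilon=\pm\epsilon$ but extend to the present hypotheses. So there is no detailed argument in the paper to compare against.

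Your excursion-based strategy is the correct one and is, in outline, precisely the argument in \cite{ManPil}: decompose the path at successive visits to $\pm\epsilon$, use the strong Markov property to identify independent Bernoulli trials with success probability $p_\epsilon\sim\beta^+\epsilon$, pass from the geometric count to an exponential via $\epsilon N_\epsilon\Rightarrow\operatorname{Exp}(\beta^+)$, and match the count with the symmetric local time of $|w|$ through the downcrossing characterization of local time. Your identification of the two technical obstacles (asymptotic independence of $N_\epsilon$ from the macroscopic skeleton, and the calibration of attempt count to local time with the right constant) is accurate; both are handled in \cite{ManPil} by the strong Markov property together with the classical limit $\epsilon\cdot\#\{\text{downcrossings of }[0,\epsilon]\text{ by }|w|\text{ before }t\}\to l(t)$.

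Two small points of precision. First, your description of the coupling (``the above-$\epsilon$ skeleton of $X_\epsilon$ matches an excursion of $|w|$ from $\epsilon$ down to $0$ and back to $\epsilon$'') has the geometry slightly inverted: on $\{X_\epsilon(\eta_k^\epsilon)=2\epsilon\}$ the process is a genuine Brownian motion above $\epsilon$, and what you want to match is the excursion of $|w|$ \emph{above} $0$ (equivalently, of $w$ away from $0$), not from $\epsilon$ downward. Second, your proposal remains a sketch: the uniform control on the $O(\epsilon^2)$ sojourns inside $[-\epsilon,\epsilon]$ and the functional LLN for the excursion point process are stated as requirements rather than verified. Since the paper itself outsources these details to \cite{ManPil}, your sketch is at the same level of completeness as the paper, and points in the right direction.
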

 The proof follows from Theorem 1 and the arguments of Theorem  2  in \cite{ManPil}.  
In this theorem, the case $x_\epsilon=\pm \epsilon$ was considered,    but  the proof remains valid for $x_\epsilon$ satisfying the assumptions above. 
}

Set
\begin{eqnarray*}
% \nonumber % Remove numbering (before each equation)
  p_\epsilon^+ &:=& \Pb\Big\{\inf\{t\geq 0:X_\epsilon(t)=-\epsilon\}<\inf\{t\geq 0:\ X_\epsilon(t)=2\epsilon\} \  | \ X_\epsilon(0)=\epsilon \Big\}, \\
   p_\epsilon^- &:=& \Pb\Big\{\inf\{t\geq 0:X_\epsilon(t)=\epsilon\}<\inf\{t\geq 0:\ X_\epsilon(t)=-2\epsilon\} \  | \ X_\epsilon(0)=-\epsilon \Big\}.
\end{eqnarray*}
\begin{theorem} \label{th: Pilip}
Assume that
$$ 
\lim_{\epsilon\to 0+}\epsilon^{-1}   p_\epsilon^\pm= \beta^\pm\in(0,\infty).
$$
Then  $X_\epsilon$ converges in distribution to a Brownian motion with a hard membrane and parameters $\beta^\pm.$
\end{theorem}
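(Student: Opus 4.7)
The plan is to observe that Theorem \ref{th: Pilip} is essentially a rephrasing of the third assertion of Theorem \ref{th: Pilip1}. Unpacking the definitions, $p_\epsilon^+ = \Pb(\sigma_{-\epsilon}^{(\epsilon)} < \sigma_{2\epsilon}^{(\epsilon)} \mid X_\epsilon(0) = \epsilon)$ and $p_\epsilon^- = \Pb(\sigma_{\epsilon}^{(\epsilon)} < \sigma_{-2\epsilon}^{(\epsilon)} \mid X_\epsilon(0) = -\epsilon)$, which are exactly the conditional hitting probabilities appearing in Theorem \ref{th: Pilip1}(3), namely the special case $x_\epsilon = \mp\epsilon$. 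Hence the hypotheses coincide and the conclusion is immediate; the only task is to match notation and invoke the previous theorem.

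Beyond this formal reduction, the conceptual content worth sketching is the following. Since $a_\epsilon$ is supported in $[-\epsilon,\epsilon]$ and pushes away from $0$, the process $X_\epsilon$ agrees with ordinary Brownian motion outside the strip $[-\epsilon,\epsilon]$. By the strong Markov property applied at successive visits of $X_\epsilon$ to $\pm\epsilon$, each visit to $+\epsilon$ is an independent Bernoulli trial: it either exits the strip at $+2\epsilon$ (failed crossing, probability $1-p_\epsilon^+$) or at $-\epsilon$ (successful crossing, probability $p_\epsilon^+ \sim \beta^+ \epsilon$). The number of failed trials before the first successful crossing is therefore geometric with success parameter $p_\epsilon^+$, of order $\epsilon^{-1}$ on average.

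The decisive quantitative step is to couple this microscopic counting with the local time at $0$ of the associated reflected Brownian motion on $[0,\infty)$. Using the explicit Green's function of BM on $[0,2\epsilon]$, or equivalently the Ray--Knight description of excursions, one shows that the local time accumulated during one failed trial is of order $\epsilon$ with an explicit constant. Summing over $\mathrm{Geom}(p_\epsilon^+)$ failed trials and using $\epsilon^{-1} p_\epsilon^+ \to \beta^+$, the aggregate local time at $0$ converges in law to $\mathrm{Exp}(\beta^+)$, which is precisely the permeability clock triggering a SNOB side-switch. The analogous argument on the negative half-line produces the $\mathrm{Exp}(\beta^-)$ clock. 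Combined with tightness of $\{X_\epsilon\}$ in $C([0,T])$ (straightforward since outside the strip the process is Brownian motion, and the drift contribution to the modulus of continuity is controlled by the bounded number of visits to $\pm\epsilon$) one concludes convergence in distribution to Brownian motion with a hard membrane and parameters $\beta^\pm$.

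The main obstacle is the quantitative coupling linking the microscopic probabilities $\epsilon^{-1} p_\epsilon^\pm$ to the macroscopic permeability rates $\beta^\pm$ through local time at $0$. This excursion-theoretic bookkeeping is carried out in Theorems 1 and 2 of \cite{ManPil}; the extension to arbitrary $x_\epsilon$ (in particular $x_\epsilon = \mp\epsilon$, as needed here) is straightforward from the arguments there.
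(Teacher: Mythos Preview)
Your proposal is correct and matches the paper's approach: the paper gives no separate proof of Theorem \ref{th: Pilip}, leaving it implicit that the statement is precisely the case $x_\epsilon = \mp\epsilon$ of Theorem \ref{th: Pilip1}(3), whose proof in turn is referred to \cite{ManPil}. Your additional heuristic sketch of the excursion-theoretic mechanism is not in the paper but is consistent with the arguments in \cite{ManPil}.
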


This theorem will be used in Subsection \ref{sec:limit hard}, where we prove that the process $\tilde X_\epsilon$ converges to a Brownian motion with a hard membrane as $\epsilon\to0.$

To calculate hitting probabilities for a Brownian motion with a hard membrane, we need another approximating result. Let us construct a process $\tilde X_\epsilon$,  $\epsilon>0$ as follows. Assume that $\tilde X_\epsilon(0)>0 $ (or $\tilde X_\epsilon(0)<0$).   Then $\tilde X_\epsilon$ behaves as a Brownian motion until the first hitting 0. At this instant the process jumps to $-\epsilon $ with probability $p_{\epsilon}^+$ and jumps to $\epsilon$ with probability $1-p_{\epsilon}^+ $ independently of the past (and correspondingly jumps to $\epsilon $ with probability $p_{\epsilon}^-$ and to $-\epsilon$ with probability $1-p_{\epsilon}^-$ if approaching from below). After this, the process $\tilde X_\epsilon$ continues to follow the described algorithm independently of the past.
Denote by $\zeta_\epsilon$ the first instant, when $\tilde X_\epsilon$ changes the sign.

\begin{theorem}\label{th:conv_bar_X}
    Let $ \tilde X_\epsilon(0)=x\neq 0 $. Suppose there exist  limits $\beta^\pm:=\lim_{\epsilon\to0}\epsilon^{-1}p_\pm^\epsilon\in(0,\infty).$ 
    \begin{enumerate}
        \item 
    If $ x>0$, then the stopped processes converge in distribution
     \[(\tilde X_\epsilon(\cdot\wedge \zeta_\epsilon), \zeta_\epsilon) \Rightarrow (|w|(\cdot\wedge \zeta_+),\zeta_+), \epsilon\to0,\]
     where $w$ is a Brownian motion started from 0, $\zeta_+=\inf\{t\geq 0 \colon l(t)\geq \operatorname{Exp}(\beta^+)\},$ $l$ is the symmetric local time of $w$ at 0, $\operatorname{Exp}(\beta^+)$ is an exponential random variable with parameter $\beta^+$, independent of $w.$
     \item If $ x<0$, then  
     \[(\tilde X_\epsilon(\cdot\wedge \zeta_\epsilon), \zeta_\epsilon) \Rightarrow (-|w|(\cdot\wedge \zeta_-),\zeta_-), \epsilon\to0,\]
     where $\zeta_-=\inf\{t\geq 0 \colon l(t)\geq \operatorname{Exp}(\beta^-)\}.$
     \item 
 $\tilde X_\epsilon$ converges in distribution to a Brownian motion with a hard membrane with parameters $\beta^\pm$, starting from $x$.
    \end{enumerate} 
        The same conclusions hold for the process $  X_\epsilon$ in place of $\tilde X_\epsilon.$
\end{theorem}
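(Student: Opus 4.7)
The plan is to treat part (1) in detail; part (2) then follows by the symmetry $X\mapsto -X$ (with $\beta^+$ and $\beta^-$ swapped), and part (3) is obtained by concatenating (1) and (2) via the strong Markov property of $\tilde X_\epsilon$ at its successive sign-change times. The final clause about $X_\epsilon$ in place of $\tilde X_\epsilon$ is Theorem~\ref{th: Pilip1}, so no separate argument is needed. I therefore focus on part (1) with $x>0$.

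The main idea is to \emph{couple} $\tilde X_\epsilon$ with a reflected Brownian motion $|w|$, where $w$ is a standard Brownian motion started at $x$, so that the cycles of $\tilde X_\epsilon$ between consecutive hits of $0$ correspond bijectively to the excursions of $|w|$ reaching height $\epsilon$. Concretely, on a common probability space, set $\tilde X_\epsilon(t)=w(t)$ on $[0,\eta_0]$ where $\eta_0=\inf\{t\colon w(t)=0\}$; then define recursively $\eta_k^{\uparrow}=\inf\{t>\eta_{k-1}^{\downarrow}\colon |w(t)|=\epsilon\}$ and $\eta_k^{\downarrow}=\inf\{t>\eta_k^{\uparrow}\colon w(t)=0\}$. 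By the strong Markov property, the piece of $|w|$ on $[\eta_k^{\uparrow},\eta_k^{\downarrow}]$ is a Brownian motion from $\epsilon$ stopped at $0$, matching in law a single cycle of $\tilde X_\epsilon$ started from $\epsilon$. Attaching independent Bernoulli$(p_\epsilon^+)$ variables $U_k$ to each such cycle, $\tilde X_\epsilon$ is realized as the time-compression of $|w|$ that deletes the gaps $[\eta_{k-1}^{\downarrow},\eta_k^{\uparrow}]$, stopped at the first index $K_\epsilon$ with $U_{K_\epsilon}=1$; by construction $K_\epsilon\sim\operatorname{Geom}(p_\epsilon^+)$.

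With this coupling, the convergence reduces to three ingredients: (a) $\epsilon K_\epsilon\Rightarrow\operatorname{Exp}(\beta^+)$, a standard geometric-to-exponential limit using $p_\epsilon^+\sim\beta^+\epsilon$; (b) the total missed time $\sum_{k=1}^{K_\epsilon}(\eta_k^{\uparrow}-\eta_{k-1}^{\downarrow})$ tends to $0$ in probability, via the bound $\sum_{k=1}^{K_\epsilon}(\eta_k^{\uparrow}-\eta_{k-1}^{\downarrow})\leq \int_0^{\eta_{K_\epsilon}^{\downarrow}}\mathbf{1}_{|w(s)|\leq\epsilon}\,ds\sim 2\epsilon\, l(\eta_{K_\epsilon}^{\downarrow})=O_P(\epsilon)$ combined with tightness of $l(\eta_{K_\epsilon}^{\downarrow})$; and (c) $l(\eta_{K_\epsilon}^{\downarrow})\Rightarrow \operatorname{Exp}(\beta^+)$, which then yields $\eta_{K_\epsilon}^{\downarrow}\Rightarrow \zeta_+$. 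Combining (a)--(c) with Skorokhod's representation and the uniform continuity of $|w|$ on compact intervals delivers the joint convergence $(\tilde X_\epsilon(\cdot\wedge\zeta_\epsilon),\zeta_\epsilon)\Rightarrow(|w|(\cdot\wedge\zeta_+),\zeta_+)$.

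The main obstacle is (c): converting the geometric count $K_\epsilon$ of cycles into symmetric local time. Here I would invoke It\^o's excursion theory: the number of excursions of $|w|$ of height at least $\epsilon$ accumulated by local time $l$ is, up to a normalisation constant, Poisson with mean proportional to $l/\epsilon$, so $l(\eta_n^{\downarrow})\sim c\epsilon n$ in probability with $c$ determined by the symmetric-local-time convention. Together with (a) this identifies the limit of $l(\eta_{K_\epsilon}^{\downarrow})$ as an exponential whose rate is forced to be exactly $\beta^+$ by the defining relation $\beta^+=\lim\epsilon^{-1}p_\epsilon^+$ in conjunction with the local-time normalisation chosen in the statement. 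The careful bookkeeping of this constant, together with the strong-Markov iteration that extends (1) and (2) to all of $[0,\infty)$ in part (3), is the delicate portion of the argument; the remaining ingredients reduce to routine facts about reflected Brownian motion and L\'evy's one-sided stable distribution of first-hitting times.
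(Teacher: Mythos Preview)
The paper's own ``proof'' of this theorem is a one-line citation: parts (1) and (2) are said to follow from Theorem~1 and the reasoning of Theorem~2 in \cite{ManPil}, and part (3) is declared a consequence of the first two. There is no argument to compare against beyond that reference.

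Your sketch is correct and almost certainly reconstructs the argument behind the cited theorems. The coupling you describe---realising the pre-switch path of $\tilde X_\epsilon$ as $|w|$ with the sub-$\epsilon$ excursion intervals excised, and attaching an independent geometric clock to the sequence of excursions reaching level~$\epsilon$---is exactly the mechanism that drives the Mandrekar--Pilipenko result. Two small remarks. First, ingredient~(b) can be handled more directly than via occupation times: each deleted gap $\eta_k^{\uparrow}-\eta_{k-1}^{\downarrow}$ is, in law, $\epsilon^2$ times the hitting time of~$1$ by $|w|$ from~$0$, which has finite mean; by Wald's identity the total deleted time has expectation $\epsilon^2\, \mathbb{E}[K_\epsilon]\sim \epsilon/\beta^+\to 0$, avoiding any circularity with~(c). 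Second, your hedging about the constant~$c$ in~(c) is unnecessary: with the symmetric local time normalisation used throughout the paper, the It\^o excursion measure of $|w|$ assigns mass $1/\epsilon$ to $\{\max e>\epsilon\}$, so the local-time gaps between successive $\epsilon$-excursions are i.i.d.\ $\operatorname{Exp}(1/\epsilon)$, giving $l(\eta_{K_\epsilon}^{\downarrow})\sim \epsilon K_\epsilon\Rightarrow\operatorname{Exp}(\beta^+)$ with no stray constant. Your treatment of parts~(2), (3), and the final clause via symmetry, strong-Markov concatenation, and Theorem~\ref{th: Pilip1} matches the paper's stated logic exactly.
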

The first two statements follow from Theorem 1 and the reasoning in Theorem  2  of \cite{ManPil}; the third is a consequence of the first two.  
\begin{remark}
 If $\beta^+=0 $  in the previous theorem,  the result still holds with the following modification. If $x>0$, then the limit process exists and coincides with a  Brownian motion reflected at 0 into the positive half-line.  In this case, we say that the membrane at 0  is non-permeable from the right. If  $x<0$, then $X$ behaves as (negative) elastic Brownian motion with parameter $\beta^-$; upon killing, it switches to the opposite half-line and continues as a reflected Brownian motion.
\end{remark}
\begin{remark}\label{rem:limitSkew}
The case $\beta_\pm=\infty$ is critical and requires additional considerations. Assume that the limits $\lim_{\epsilon\to0}p^\epsilon_\pm=:p_\pm\in(0,1)$  exist. Then, similarly to Theorem 3 in \cite{PilipenkoPrykhodko2015} it can be shown that $X_\epsilon\Rightarrow X$, as $\epsilon \to 0$, where $X$ is a skew Brownian motion with permeability parameter $\gamma=\frac{p_--p_+}{p_-+p_+}\in(-1,1)$. For the definition and other  properties of a skew Brownian motion see \cite{BarlowPitmanYor, Iksanov2025, Lejay2006}.
\end{remark}

\subsection{Hitting probabilities}\label{sec:HM hitting probab}
In this subsection, we calculate the hitting probabilities for a Brownian motion with a hard membrane.

In the next theorem, it is more convenient to consider a strong Markov `modification' $\tilde X$ defined in Remark \ref{rem:SNOB2lines}. We will also refer to it as a Brownian motion with a hard membrane.
\begin{theorem} \label{thm:probab_hard}
    Let $\tilde X$ be a Brownian motion with a hard membrane at 0 with permeability parameters $\beta^\pm$.
    Then for any $a<0<b$ and any $x\in[a,b]\setminus \{0\}$, we have
    \begin{equation}\label{eq:P_tau_a_b}
    \Pb_x(\tau_a<\tau_b)=\begin{cases}
        \frac{x}{a}+\frac{a-x}{a} \Pb_{0-}(\tau_a<\tau_b),& x\in[a,0),\\
         \frac{b-x}{b} \Pb_{0+}(\tau_a<\tau_b),& x\in(0,b],
    \end{cases}
  \end{equation}
  \begin{equation}\label{eq:P_tau_b_a}
    \Pb_x(\tau_a>\tau_b)=\begin{cases}
         \frac{a-x}{a} \Pb_{0-}(\tau_a>\tau_b),& x\in[a,0),\\
       \frac{x}{b}+  \frac{b-x}{b} \Pb_{0+}(\tau_a>\tau_b),& x\in(0,b],
    \end{cases}
    \end{equation}
    where 
    \[
    \tau_y:=\inf\{t\geq 0\colon \tilde X(t)=y\},
    \]
    and
    \begin{eqnarray}
    \Pb_{0-}(\tau_a<\tau_b)&=&1-\Pb_{0-}(\tau_a>\tau_b)=1-\frac{-a\beta^-}{-a\beta^-+b\beta^++1}=\frac{b\beta^++1}{-a\beta^-+b\beta^++1}, \nonumber\\
    \Pb_{0+}(\tau_a<\tau_b)&=&1-\Pb_{0+}(\tau_a>\tau_b)= 1-\frac{-a\beta^- +1}{-a\beta^-+b\beta^++1}=
    \frac{b\beta^+}{-a\beta^-+b\beta^++1}.\ \quad \quad \label{eq:tau_1_tau_b}
    \end{eqnarray}
\end{theorem}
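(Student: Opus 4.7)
The plan is to reduce \eqref{eq:P_tau_a_b}--\eqref{eq:P_tau_b_a} to the two ``base'' probabilities $p^{\pm} := \Pb_{0\pm}(\tau_a < \tau_b)$ by a first-entry decomposition at $0$, and then to compute $p^{\pm}$ by solving a $2\times 2$ linear system whose coefficients come from the classical elastic-Brownian-motion absorption probability. The one subtlety throughout is that $X$ itself is not strong Markov at $0$, so every first-passage decomposition must be carried out for the modification $\tilde X$ on the augmented state space $(-\infty, -0]\cup[0+, \infty)$ of Remark~\ref{rem:SNOB2lines}, on which the strong Markov property does hold.

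For $x\in(0,b]$, $\tilde X$ coincides with a standard Brownian motion until it first hits $0$, at which moment it is located at the state $0+$. Since reaching $a$ requires a prior visit to $0$ but reaching $b$ does not, the strong Markov property of $\tilde X$ at $0+$ combined with the classical gambler's-ruin formula gives
\begin{equation*}
\Pb_x(\tau_a<\tau_b)=\Pb_x(\tau_0<\tau_b)\,p^+ = \frac{b-x}{b}\,p^+.
\end{equation*}
The analogous decomposition for $x\in[a,0)$, in which $\tilde X$ either exits $[a,0]$ at $a$ (so $\tau_a<\tau_b$) or at $0-$ (and then continues with probability $p^-$), gives the first line of \eqref{eq:P_tau_a_b}. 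Since $\Pb_x(\tau_a=\tau_b)=0$, the complementary formulas \eqref{eq:P_tau_b_a} follow at once.

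To compute $p^{\pm}$, I note that starting from $0+$, the process is a Brownian motion reflected at $0$ on $[0,\infty)$, killed the instant its local time at $0$ reaches an independent $\operatorname{Exp}(\beta^+)$-threshold, at which moment it is reborn at $0-$ and proceeds independently. Restricted to $[0,b]$ with $b$ absorbing, let $q^+$ denote the probability that killing precedes $\tau_b$: if $b$ is hit first then $\tau_a>\tau_b$; if killing occurs first the problem restarts from $0-$. A symmetric analysis from $0-$ gives
\begin{equation*}
p^+ = q^+\,p^-,\qquad p^- = (1-q^-) + q^-\,p^+.
\end{equation*}
The constant $q^+$ is the standard elastic-BM absorption probability: applying It\^o's formula to $V_t := u(w^+_t)\,e^{-\beta^+ l_t}$ shows that $V$ is a bounded martingale iff $u''=0$ on $(0,b)$, $u(b)=1$, and the elastic boundary condition $u'(0)=\beta^+ u(0)$ holds. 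The unique affine solution is $u(x)=(\beta^+ x + 1)/(b\beta^+ + 1)$, whence $q^+ = 1-u(0) = b\beta^+/(b\beta^+ + 1)$; symmetrically $q^- = -a\beta^-/(-a\beta^- + 1)$. Substituting into the linear system and doing the routine algebra yields
\begin{equation*}
p^- = \frac{b\beta^+ + 1}{-a\beta^- + b\beta^+ + 1},\qquad p^+ = \frac{b\beta^+}{-a\beta^- + b\beta^+ + 1},
\end{equation*}
which is \eqref{eq:tau_1_tau_b}.

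The main obstacle is conceptual rather than computational: the whole decomposition argument implicitly uses the strong Markov property at $0$, which fails for $X$ but holds for $\tilde X$, so one has to carefully distinguish $0+$ from $0-$ (equivalently, carry the sign indicator $N(l(\cdot))$) throughout. Once this is set up, the problem reduces to a standard elastic-BM computation on each side of the membrane plus a $2\times 2$ linear algebra.
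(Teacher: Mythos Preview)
Your proof is correct and follows essentially the same route as the paper's: a strong-Markov first-entry decomposition at $0\pm$ to reduce to $p^\pm$, followed by the same $2\times 2$ linear system $p^+ = q^+ p^-$, $p^- = (1-q^-) + q^- p^+$. The only cosmetic difference is in how the elastic-BM absorption constant is obtained: you use an It\^o/martingale argument with the elastic boundary condition, whereas the paper cites the exponential law of the local time $l(\tau_b^{|w|})\sim\operatorname{Exp}(b^{-1})$ and compares two independent exponentials to get $q^+ = b\beta^+/(1+b\beta^+)$ directly.
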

\begin{proof}
Let $w$ be a Brownian motion and $l$ its symmetric local time at 0. Define $\tau^{|w|}_b:=\inf\{t\geq 0\colon |w(t)|=b\})$,  $b>0$. It is known, see \cite[formula (4.19) at p. 429]{KaratzasShreve1991} or  \cite[formula (3.18.1)]{BorodinSalminen}, that if $w(0)=0,$ then the accumulated local time $l(\tau^{|w|}_b)$  has an exponential distribution $\operatorname{Exp}(b^{-1})$ with mean $b$.
Let $Y$ be a (positive) elastic Brownian motion with parameter $\beta>0$ defined as \[
Y(t)=\begin{cases}
    |w(t)|, & l(t)<\operatorname{Exp}(\beta),\\
    \infty, & l(t)\geq \operatorname{Exp}(\beta),
\end{cases}\]
where  $\operatorname{Exp}(\beta)$ is an exponential random variable with parameter $\beta$, independent of $w$. Then the probability that $Y$ hits $b$, given that $Y(0)=0$, is equal to
\begin{align}
    \Pb_0(\exists t\geq 0 \colon Y(t)=b)&=\Pb_0(l(\tau^{|w|}_b)<\operatorname{Exp}(\beta)) \nonumber
    \\  =&\Pb(\operatorname{Exp}(b^{-1})<\operatorname{Exp}(\beta))=\frac{b^{-1}}{b^{-1}+\beta}=\frac{1}{1+b\beta}. \label{eq:probab_elastic}
\end{align}
Recall that the Brownian motion with a hard membrane behaves as a standard Brownian motion up to hitting 0. Thus,
for any $x_1x_2>0$ and $x\in[x_1,x_2]$,    the probability
\begin{equation}\label{eq:hit_prob_BM}
\Pb_x(\tau_{x_1}<\tau_{x_2})=\frac{x_2-x}{x_2-x_1}    
\end{equation}
coincides with the corresponding probability for standard Brownian motion. Applying the strong Markov property for $\tilde X$ and taking into account that the distributions of the modulus of Brownian motion and of reflected Brownian motion coincide,  we obtain \eqref{eq:P_tau_a_b}, \eqref{eq:P_tau_b_a}.
To calculate the probabilities $\Pb_{0\pm}(\tau_a<\tau_b)$, we apply the strong Markov property once again together with \eqref{eq:probab_elastic}:
\begin{align*}
    \Pb_{0-}(\tau_a<\tau_b)=\Pb_{0-}(\tau_a<\tau_{0+})+ \Pb_{0-}(\tau_a>\tau_{0+})\Pb_{0+}(\tau_a<\tau_b) \\
    =\frac{1}{1-a\beta_-}+\frac{-a\beta_-}{1-a\beta_-}\Pb_{0+}(\tau_a<\tau_b),
\end{align*}
\begin{align*}
    \Pb_{0+}(\tau_a<\tau_b)=  \Pb_{0+}(\tau_{0-}<\tau_b)\Pb_{0-}(\tau_a<\tau_b) \\
   = \frac{b\beta_+}{1+b\beta_+}\Pb_{0-}(\tau_a<\tau_b).
\end{align*}
Solving this system of linear equations yields the explicit expressions for $ \Pb_{0\pm}(\tau_a<\tau_b)$, and thus completes the proof of the theorem.
\end{proof}

\subsection{Convergence to a hard membrane process}\label{sec:limit hard}
In this section, we study the stationary problem given by equations  \eqref{stationary PDE},  \eqref{bound_cond_left}, and \eqref{bound_cond_right}, which is associated with the initial-boundary value problem \eqref{eq:theor_1}--\eqref{eq:theor_1_init}.

The central question is whether this process converges as $\epsilon\rightarrow0$, under suitable assumptions on the parameters $\kappa_{\epsilon},\kappa_{T}^{\epsilon}$, and the functions
$\chi_{\epsilon,ver}\left(  x\right)$, as well as%
\[
\overline{\chi}_{\epsilon}\left(  x\right)  =\chi_{\epsilon,bdr}\left(
x-1\right)  \chi_{\epsilon,bdr}\left(  x+1\right)  \chi_{\epsilon}\left(
x\right)  .
\]

Recall that the solution to problem (9) is represented in the form (see \eqref{eq:lim_T})
\[
T_\epsilon(x,y)= T_+\Pb_{x,y}( X_\epsilon(\tau_\epsilon)=-1),
\]
where $X_\epsilon$ is the solution to equation \eqref{SDE comp 1}, and $\tau_\epsilon:=\inf\{t\geq 0\colon |X_\epsilon(t)|=1\}.$ 

{This probability can be calculated explicitly.  We recall a well-known result from the theory of one-dimensional diffusions \cite{Gikhman_Skorokhod_1968}. Assume that $X$ is a solution to the SDE
\[
dX(t)=\alpha(X(t)) dt+\beta(X(t)) d w (t),
\]
where $\alpha, \sigma $ are Lipschitz continuous functions such that $\sigma(y)\neq 0, y\in\mbR.$ Let $\tau_y:=\inf\{t\geq 0\colon X(t)=y\}$ be the first hitting time of level $y$. Then, for any $x_1\leq x\leq x_2$,  
\[
\Pb_x(\tau_{x_1}<\tau_{x_2})=1-\Pb_x(\tau_{x_2}<\tau_{x_1})=\frac{s(x_2)-s(x)}{s(x_2)-s(x_1)},
\]
where $s$ is any non-constant solution of the differentioal equation \[\alpha(y)s'(y)+1/2 \beta^2(y)s''(y)=0, y\in\mbR. \] In particular, we can take 
\[
s(y):= \int_{y_0}^y\exp\left(-\int_{z_0}^z\frac{2\alpha(u)}{\beta^2(u)}du\right) dz, 
\]
for arbitrary  $y_0, z_0\in\mathbb{R}$. The function $s$ is called the scale function.}

%It follows from this relation that
Hence, in order to study the limit of stationary solutions as $\epsilon\to 0$, it is sufficient to analyze the scale functions of the processes $X_\epsilon$ alone. However, we will establish a more general result concerning the convergence of  processes $\{\tilde X_\epsilon\}$, obtained from the processes $\{X_\epsilon\}$ by a time change (see Section \ref{section:stoch interpret}), to a Brownian motion with  a hard membrane as $\epsilon \to 0$, under certain assumptions on the relations between $\kappa_{\epsilon}$ and $\kappa_T^{\epsilon}$. 

It is clear  that the exit probability $\Pb_{x,y}( X_\epsilon(\tau_\epsilon)=-1) $ coincides with that of the process $\tilde X_\epsilon$. 
From this, by applying Lemma \ref{lem:cont_tau}, we obtain the desired conclusion.

We first obtain the result for the process  $\bar X_\epsilon$, which is identical to $\tilde X_\epsilon$ except that it has no barriers at $\pm 1$, so that it can pass freely through the lateral boundaries.
\begin{theorem}\label{th:concergence without boundaties}
    Let $\bar X_\epsilon$ be a solution to the SDE
\begin{equation}\label{eq:conv without boundaries}
    d\bar X_{\epsilon}\left(  t\right)  =\frac{\kappa_{T}^{\epsilon} {\chi
}_{\epsilon}\left(  \bar X_{\epsilon}\left(  t\right)  \right)   {\chi
}_{\epsilon}^{\prime}\left(  \bar X_{\epsilon}\left(  t\right)  \right)  }%
{\kappa_{\epsilon}+\kappa_{T}^{\epsilon} {\chi}_{\epsilon}^{2}\left(
\bar X_{\epsilon}\left(  t\right)  \right)  }dt+d  w \left(  t\right),
\end{equation}
      where   $ {\chi}_{\epsilon}\left(  x\right)$ is an even function such that
   \begin{equation}
       \label{eq:chi_e}
   \chi_{\epsilon}\left(  x\right)  =\left\{
\begin{array}
[c]{ccc}%
\frac{1}{2}\left(  \frac{x}{\epsilon}\right)  ^{\alpha} & \text{for} &
x\in\left[  0,\epsilon\right), \\
-\frac{1}{2}\frac{\left\vert x-2\epsilon\right\vert ^{\alpha}}{\epsilon
^{\alpha}}+1 & \text{for} & x\in\left[  \epsilon,2\epsilon\right),  \\
1 & \text{for} & x\geq2\epsilon,
\end{array}
\right.
   \end{equation}
and $\alpha>0$ is a fixed parameter. Suppose that $\bar X_{\epsilon}\left(  0\right)\neq 0$.

Assume that $\frac{\kappa_T^\epsilon}{\kappa_\epsilon}\to \infty$ as $\epsilon \to 0$ and let $K$ be a constant.

\begin{enumerate}
    \item  If $\alpha>1/2$ and
 \begin{enumerate}
 \item 
    $\frac{\kappa_T^\epsilon}{\kappa_\epsilon}\sim (K\epsilon)^{{2\alpha}/{(1-2\alpha)}}$, as $\epsilon\to 0$, then $\bar X_\epsilon$ converges in distribution as $\epsilon \to 0$  to a Brownian motion with a hard membrane   
 and parameters $\beta^+=\beta^-  =K\left(\int_0^\infty\frac{dv}{1+\frac14 v^{2\alpha}}\right)^{-1} $;
 %and parameters $\beta^+=\beta^-=\beta$.
 \item  $\frac{\kappa_T^\epsilon}{\kappa_\epsilon}= o( \epsilon^{{2\alpha}/{(1-2\alpha)}})$, as $\epsilon\to 0$, then $\bar X_\epsilon$ converges in distribution as $\epsilon \to 0$ to a Brownian motion;
\item   $\epsilon^{{2\alpha}/{(1-2\alpha)}}= o\left(\frac{\kappa_T^\epsilon}{\kappa_\epsilon}\right) $, as $\epsilon\to 0$, then $\bar X_\epsilon$ converges in distribution as $\epsilon \to 0$  to a    Brownian motion reflected at 0 with nonpermeable membrane at 0.
\end{enumerate}
\item
 If  $\alpha=1/2$ and 
 \begin{enumerate}
     \item 
 $\ln\frac{\kappa_T^\epsilon}{\kappa_\epsilon}\sim (K\epsilon)^{-1}$, as $\epsilon\to 0$, then  $\bar X_\epsilon$ also converges  in distribution  as $\epsilon \to 0$ to a Brownian motion with a hard membrane and parameters $\beta^+=\beta^-  =\frac{K}{4}$;
  \item $\ln\frac{\kappa_T^\epsilon}{\kappa_\epsilon}=o(\epsilon^{-1})$, as $\epsilon\to 0$,  then $\bar X_\epsilon$ converges in distribution as $\epsilon \to 0$  to a Brownian motion;
\item  
$\epsilon^{-1}=o\left(\ln\frac{\kappa_T^\epsilon}{\kappa_\epsilon}\right)$, as $\epsilon\to 0$, then $\bar X_\epsilon$ converges  in distribution as $\epsilon \to 0$ to a    Brownian motion reflected at 0 with nonpermeable membrane at 0.
 \end{enumerate}
 \item
If {$\alpha\in(0,1/2)$}  %, and $\frac{\kappa_T^\epsilon}{\kappa_\epsilon}\to \infty$
 as $\epsilon \to 0$, then  $\bar X_\epsilon$ converges  in distribution  as $\epsilon \to 0$ to a Brownian motion.
\end{enumerate}

% there exists a limit 
% $$
% \lim_{\epsilon\to 0} p_\epsilon^{\pm}=\lim_{\epsilon \to 0}\left(\left(\frac{\kappa_T^\epsilon}{\kappa_\epsilon}\right)^{2\alpha-1}\int_0^{{\kappa_T^\epsilon}/{\kappa_\epsilon}}\frac{dv}{1+\frac14v^{2\alpha}}+3\right)^{-1}>0.
% $$ 
  \label{thm:barX_hardMembrane}
\end{theorem}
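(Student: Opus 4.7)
The strategy is to reduce the convergence statement to Theorem~\ref{th: Pilip} and the surrounding remarks. Outside $[-2\epsilon,2\epsilon]$ the drift of $\bar X_\epsilon$ vanishes (since $\chi_\epsilon\equiv 1$ there), so the process is a Brownian motion on that set and the setting of Theorem~\ref{th: Pilip} applies after the harmless relabeling $\epsilon\mapsto 2\epsilon$. Depending on whether $p_\epsilon^\pm/\epsilon$ has a finite positive limit, vanishes, or diverges, the limit of $\bar X_\epsilon$ will be a hard-membrane Brownian motion (Theorem~\ref{th: Pilip}), a reflected Brownian motion (the remark after Theorem~\ref{th:conv_bar_X}), or a standard Brownian motion (Remark~\ref{rem:limitSkew}, with symmetry forcing the skew parameter to $0$). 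Since $\chi_\epsilon$ is even, $p_\epsilon^+=p_\epsilon^-$ and so $\beta^+=\beta^-$.

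The drift is $a_\epsilon(x)=\tfrac12(\ln(\kappa_\epsilon+\kappa_T^\epsilon\chi_\epsilon^2(x)))'$, so the scale density of $\bar X_\epsilon$ is proportional to $(\kappa_\epsilon+\kappa_T^\epsilon\chi_\epsilon^2(x))^{-1}$, and the hitting-probability formula yields
\[
p_\epsilon^+\;=\;\frac{J_+(\epsilon)}{J_-(\epsilon)+J_+(\epsilon)},\qquad J_\pm(\epsilon):=\int_{I_\pm}\frac{dz}{\kappa_\epsilon+\kappa_T^\epsilon\chi_\epsilon^2(z)},
\]
with $I_+=[\epsilon,2\epsilon]$, $I_-=[-\epsilon,\epsilon]$. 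Writing $R_\epsilon:=\kappa_T^\epsilon/\kappa_\epsilon\to\infty$, the change of variables $z=\epsilon u$ followed by $v=uR_\epsilon^{1/(2\alpha)}$ on $I_-$ yields
\[
J_-(\epsilon)\;=\;\frac{2\epsilon}{\kappa_\epsilon R_\epsilon^{1/(2\alpha)}}\int_0^{R_\epsilon^{1/(2\alpha)}}\frac{dv}{1+v^{2\alpha}/4},
\]
while a direct bound gives $J_+(\epsilon)=O(\epsilon/\kappa_T^\epsilon)$. For $\alpha>1/2$ the integral in $J_-$ converges to $\tilde C:=\int_0^\infty dv/(1+v^{2\alpha}/4)<\infty$, and combining these estimates shows that $p_\epsilon^+/\epsilon$ is of order $R_\epsilon^{1/(2\alpha)-1}/\epsilon$. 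Under the critical scaling of item~1(a), $R_\epsilon^{1/(2\alpha)-1}=K\epsilon$, giving the stated finite positive limit and, via Theorem~\ref{th: Pilip}, the hard-membrane process. In item~1(b), $R_\epsilon$ grows slower than critical, so $p_\epsilon^+/\epsilon\to\infty$ (while $p_\epsilon^+\to 0$), and Remark~\ref{rem:limitSkew} combined with symmetry yields a Brownian motion. In item~1(c), $R_\epsilon$ grows faster than critical, so $p_\epsilon^+/\epsilon\to 0$ and the membrane becomes impermeable, yielding a reflected Brownian motion.

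Item~2 ($\alpha=1/2$) is handled analogously, except that $\tilde C=\infty$: the integral becomes $4\ln(1+R_\epsilon/4)\sim 4\ln R_\epsilon$, which shifts the critical scaling to $\ln R_\epsilon\sim(K\epsilon)^{-1}$. Item~3 ($0<\alpha<1/2$) is qualitatively different because the integral defining $J_-$ is now dominated by its polynomial tail, giving $J_-(\epsilon)=\Theta(\epsilon/\kappa_T^\epsilon)$ of the same order as $J_+(\epsilon)$, so $p_\epsilon^\pm$ tends to a limit in $(0,1)$; again, Remark~\ref{rem:limitSkew} with $p_+=p_-$ yields a Brownian motion.

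The \textbf{main obstacle} is the precise determination of the constants in the asymptotic expansion of $J_-(\epsilon)$ in items~1(a) and~2(a), matching them with the stated expressions for $\beta^\pm$, together with a careful treatment of the boundary-layer integral $J_+(\epsilon)$ over $[\epsilon,2\epsilon]$ where $\chi_\epsilon$ is not a pure power law. The application of Theorem~\ref{th: Pilip} and the associated remarks, once the asymptotics of $p_\epsilon^\pm$ are established, is routine.
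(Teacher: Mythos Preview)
Your approach is essentially the same as the paper's: compute the scale function, evaluate the relevant crossing probability via the ratio of scale integrals, extract its asymptotics in the three regimes, and invoke the abstract convergence results (Theorems~\ref{th: Pilip1}/\ref{th: Pilip} and the remarks following Theorem~\ref{th:conv_bar_X} and Remark~\ref{rem:limitSkew}).

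The one practical difference is the choice of endpoints for the hitting probability. Since the drift is supported on $[-2\epsilon,2\epsilon]$, after the relabeling $\epsilon\mapsto 2\epsilon$ the quantity needed by Theorem~\ref{th: Pilip} is the probability of reaching $-2\epsilon$ before $4\epsilon$ from $2\epsilon$; your integrals $I_-=[-\epsilon,\epsilon]$, $I_+=[\epsilon,2\epsilon]$ correspond instead to reaching $-\epsilon$ before $2\epsilon$ from $\epsilon$, which does not match the hypotheses. The paper avoids exactly the obstacle you flag by invoking Theorem~\ref{th: Pilip1} with target $x_\epsilon=0$ and working with the triple $0,\,2\epsilon,\,4\epsilon$: the numerator integral is then over $[2\epsilon,4\epsilon]$, where $\chi_\epsilon\equiv 1$, and equals $2\epsilon/(\kappa_\epsilon+\kappa_T^\epsilon)$ exactly, while the middle piece $\int_\epsilon^{2\epsilon}$ sits in the denominator and is harmlessly bounded by the mean-value theorem. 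With that adjustment your computation of $J_-$ (the paper's $\int_0^\epsilon$ after the substitution $v=u\eta$, $\eta^{2\alpha}=R_\epsilon$) goes through verbatim and yields the stated constants; no separate ``careful treatment'' of the transition layer is needed.

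For the degenerate subcases your reasoning is correct in substance but note that Remark~\ref{rem:limitSkew} as stated requires $p_\pm\in(0,1)$, so for item~1(b) (where $p_\epsilon^\pm\to 0$ with $p_\epsilon^\pm/\epsilon\to\infty$) you should either argue directly that the vanishing drift yields Brownian motion, or view it as the $\beta^\pm\to\infty$ limit of the hard-membrane process. The paper is equally brief on subcases (b) and~(c).
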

{\begin{remark}
The drift function $\frac{\kappa_{T}^{\epsilon} {\chi
}_{\epsilon}\left(  x  \right)   {\chi
}_{\epsilon}^{\prime}\left( x\right)  }%
{\kappa_{\epsilon}+\kappa_{T}^{\epsilon} {\chi}_{\epsilon}^{2}\left(
x \right)  }$ in equation \eqref{eq:conv without boundaries} is non-differentiable at the point 0. 
However, for $\alpha>0$ it is locally integrable at $0$.
Therefore, for 
$\alpha>0$ there exists a  unique  solution of equation (1) (see, e.g., \cite{ChernyEngelbert2005}).
\end{remark}
}
\begin{proof}[Proof of Theorem \ref{thm:barX_hardMembrane}]
The scale function of the process $\left(  \bar X_{\epsilon}\left(
t\right)  \right)  _{t\geq0}$ is given by
\[
s_{\epsilon}\left(  x\right)  =\int_{0}^{x}\exp\left\{  -2\int_{0}^{y}%
\frac{\kappa_{T}^{\epsilon}\overline{\chi}_{\epsilon}\left(  z\right)
\overline{\chi}_{\epsilon}^{\prime}\left(  z\right)  }{\kappa_{\epsilon
}+\kappa_{T}^{\epsilon}\overline{\chi}_{\epsilon}^{2}\left(  z\right)
}dz\right\}  dy.
\]
The exponent can be rewritten as:
\begin{align*}
-2\int_{0}^{y}\frac{\kappa_{T}^{\epsilon}{\chi}_{\epsilon}\left(
z\right) {\chi}_{\epsilon}^{\prime}\left(  z\right)  }%
{\kappa_{\epsilon}+\kappa_{T}^{\epsilon}{\chi}_{\epsilon}^{2}\left(
z\right)  }dz  & =-\int_{0}^{y}\frac{d\left(  \kappa_{\epsilon}+\kappa
_{T}^{\epsilon}{\chi}_{\epsilon}^{2}\left(  z\right)  \right)
}{\kappa_{\epsilon}+\kappa_{T}^{\epsilon}{\chi}_{\epsilon}^{2}\left(
z\right)  }dz=\left.  -\log\left(  \kappa_{\epsilon}+\kappa_{T}^{\epsilon
}{\chi}_{\epsilon}^{2}\left(  z\right)  \right)  \right\vert
_{z=0}^{z=y}\\
& =\log\frac{\kappa_{\epsilon}}{\kappa_{\epsilon}+\kappa_{T}^{\epsilon
}{\chi}_{\epsilon}^{2}\left(  y\right)  }.
\end{align*}
Thus,
\[
s_{\epsilon}\left(  x\right)  =\int_{0}^{x}\frac{\kappa_{\epsilon}}%
{\kappa_{\epsilon}+\kappa_{T}^{\epsilon}{\chi}_{\epsilon}^{2}\left(
y\right)  }dy.
\]

Let us compute%
\[
\bar p_{\epsilon}^{+}=\Pb\Big( \inf\big\{t\geq 0:\bar X_\epsilon(t)=0\}<\inf\{t\geq 0:\ \bar X_\epsilon(t)=4\epsilon\big\} \ \Big|\bar X_{\epsilon}\left(
0\right)  =2\epsilon\Big).
\]
It is given by%
\[
\bar p_{\epsilon}^{+}=\frac{s_{\epsilon}\left(  4\epsilon\right)  -s_{\epsilon
}\left(  2\epsilon\right)  }{s_{\epsilon}\left(  4\epsilon\right)
-s_{\epsilon}\left(  0\right)  }=\frac{\int_{2\epsilon}^{4\epsilon}}{\int%
_{0}^{\epsilon}+\int_{\epsilon}^{2\epsilon}+\int_{2\epsilon}^{4\epsilon}},%
\]
where inside the integrals we have the integrand $\frac{\kappa_{\epsilon}%
}{\kappa_{\epsilon}+\kappa_{T}^{\epsilon}{\chi}_{\epsilon}^{2}\left(
y\right)  }$. Put $\eta^{2\alpha}=\frac{\kappa_{T}^{\epsilon}}{\kappa
_{\epsilon}}$. Assume $\epsilon$ is small enough so that the boundary
layer and the transport barrier do not intersect, i.e. $-1+4\epsilon<-4\epsilon$.
Then%
\[
\int_{0}^{\epsilon}\frac{\kappa_{\epsilon}}{\kappa_{\epsilon}+\kappa
_{T}^{\epsilon}{\chi}_{\epsilon}^{2}\left(  y\right)  }dy=\int%
_{0}^{\epsilon}\frac{dy}{1+\frac{\kappa_{T}^{\epsilon}}{4\kappa_{\epsilon}%
}\left(  \frac{y}{\epsilon}\right)  ^{2\alpha}}=\epsilon\int_{0}^{1}\frac
{du}{1+\frac{\kappa_{T}^{\epsilon}}{4\kappa_{\epsilon}}u^{2\alpha}}%
=\frac{\epsilon}{\eta}\int_{0}^{\eta}\frac{dv}{1+\frac{1}{4}v^{2\alpha}}.
\]
There exists $c\in\left[  \epsilon,2\epsilon\right]  $ such that%
\[
\int_{\epsilon}^{2\epsilon}\frac{\kappa_{\epsilon}}{\kappa_{\epsilon}%
+\kappa_{T}^{\epsilon}{\chi}_{\epsilon}^{2}\left(  y\right)
}dy=\epsilon\frac{1}{1+\eta^{2\alpha}{\chi}_{\epsilon}^{2}\left(
c\right)  },
\]
where $\frac{1}{4}\leq{\chi}_{\epsilon}^{2}\left(  c\right)  \leq1$,
and%
\[
\int_{2\epsilon}^{4\epsilon}\frac{\kappa_{\epsilon}}{\kappa_{\epsilon}%
+\kappa_{T}^{\epsilon}{\chi}_{\epsilon}^{2}\left(  y\right)
}dy=2\epsilon\frac{1}{1+\eta^{2\alpha}}.
\]
Hence,%
\begin{equation}
\bar p_{\epsilon}^{+}\epsilon^{-1}={\epsilon}^{-1}\frac{\frac{1}{1+\eta^{2\alpha}}}{\frac{1}{\eta
}\int_{0}^{\eta}\frac{dv}{1+\frac{1}{4}v^{2\alpha}}+\frac{1}{1+\eta^{2\alpha
}{\chi}_{\epsilon}^{2}\left(  c\right)  }+\frac{2}{1+\eta^{2\alpha}}%
}.\label{key identity}%
\end{equation}

Let $\alpha>\frac{1}{2}$.
By the assumptions of the theorem, $\eta\rightarrow\infty$ as $\epsilon \to 0$. Then  
$$
 {\bar p^+_\epsilon}{\epsilon^{-1}}\sim \frac{1}{\epsilon\left(\frac{1}{\chi^2(c)}+2\right)+\epsilon\eta^{2\alpha-1}\int_0^\infty\frac{dv}{1+\frac14 v^{2\alpha}}}\sim \frac{1}{\epsilon\eta^{2\alpha-1}\int_0^\infty\frac{dv}{1+\frac14 v^{2\alpha}}}, \ \epsilon \to 0. %\eta\rightarrow\infty. 
$$
If % $\epsilon\sim \eta^{1-2\alpha}$,
$\eta^{2\alpha}=\frac{\kappa_T^\epsilon}{\kappa_\epsilon}\sim (K\epsilon)^{{2\alpha}/{(1-2\alpha)}}$, i. e. $\eta\sim (K\epsilon)^{1/(1-2\alpha)}$, there exists 
a limit
\[
\beta^+:=\lim_{\epsilon\to 0}{\bar p^+_\epsilon}{\epsilon^{-1}}=\left(\frac1K\int_0^\infty\frac{dv}{1+\frac14 v^{2\alpha}}\right)^{-1}.
\] 

Since $\chi_\epsilon$ is even, the same applies to
$$
\bar p^-_\epsilon =\Pb\Big( \inf\big\{t\geq 0:\bar X_\epsilon(t)=0\}<\inf\{t\geq 0:\ \bar X_\epsilon(t)=-4\epsilon\big\} \ \Big|\bar X_{\epsilon}\left(
0\right)  =-2\epsilon\Big),
$$
and
$\beta^-:=\lim_{\epsilon\to 0}\bar p^-_\epsilon \epsilon^{-1}=\beta^+$.
 By {Theorem \ref{th: Pilip1}}, $\bar X_\epsilon$ converges in distribution to a Brownian motion with a hard membrane with parameters $\beta^\pm=\beta$.
 
For $\alpha=1/2$, 
$$
 {\bar p^+_\epsilon}{\epsilon^{-1}}\sim \frac{1}{4\epsilon\frac{\eta}{1+\eta}\int_0^\eta\frac{dv}{4+v}} \sim\left(4\epsilon(\log(\eta+4)-\log 4)\right)^{-1}, \ \epsilon\to 0.
$$  
If  $\log\frac{\kappa_T^\epsilon}{\kappa_\epsilon}\sim (K\epsilon)^{-1}$, then there exists a limit
\[
\beta^+=\beta^-=\lim_{\epsilon\to 0}\bar p^+_\epsilon \epsilon^{-1}=\lim_{\epsilon\to 0} \left(4\epsilon\log\frac{\kappa_T^\epsilon}{\kappa_\epsilon}\right)^{-1}=\frac{K}{4},
\]
and $\bar X_\epsilon$ converges in distribution to a Brownian motion with a hard membrane with parameter $\beta^{\pm}=\beta$.

If $\alpha<1/2$, then 
 \[
\lim_{\epsilon \to 0}p_\epsilon^+=\lim_{\epsilon \to 0}\left(\frac{\int_0^\eta\frac{dv}{1+\frac14v^{2\alpha}}}{\eta^{2\alpha-1}}+3\right)^{-1}=\left(\frac{4}{1-2\alpha}+3\right)^{-1}=\frac{1-2\alpha}{7-6\alpha},
\]
which follows easily from l'H\^opital's rule. The rest of the proof follows from Remark \ref{rem:limitSkew}.
\end{proof}

\begin{corollary}
Let  $\bar\tau_{\epsilon}=\inf\{t\geq0:|\bar X_\epsilon (t)|=1\}$.    Then $\lim_{\epsilon\to 0 } \Pb_{x }(\bar X_\epsilon(\bar\tau_\epsilon)=-1)$ coincides with $\Pb_{x,y}( X(\tau)=-1)$, where $X$ is a Brownian motion with a hard membrane, $\tau=\inf\{t\geq 0: |X(t)|=1\}$. In particular,
\begin{equation} \label{eq:prob_1}
\lim_{\epsilon\to 0 } \Pb_{x }(\bar X_\epsilon(\bar\tau_\epsilon)=-1)=\begin{cases} \frac{1+(1-x)\beta}{2\beta+1}, & x\in [-1,0),\\
\frac{(1-x)\beta}{2\beta+1},& x\in (0,1].\end{cases}
\end{equation}
\end{corollary}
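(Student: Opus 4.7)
The plan is to combine the weak-convergence statement of Theorem \ref{thm:barX_hardMembrane} with a continuity argument for the exit-from-$[-1,1]$ functional, and then read off the explicit probability from Theorem \ref{thm:probab_hard}. Concretely, in the regime in which the limit is the hard membrane (cases 1(a) and 2(a), so that $\beta^+=\beta^-=\beta$), Theorem \ref{thm:barX_hardMembrane} gives $\bar X_\epsilon \Rightarrow X$, where $X$ is a Brownian motion with hard membrane at $0$ of permeability $\beta$ started at $x$. To propagate this to convergence of the exit distributions, I would invoke the previously referenced Lemma \ref{lem:cont_tau}, which establishes that the exit-point functional $\omega\mapsto\omega(\tau(\omega))$, with $\tau(\omega):=\inf\{t\geq 0:|\omega(t)|=1\}$, is continuous at $\Pb_x$-almost every path of the limit. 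The continuous mapping theorem then yields
\[
\Pb_x(\bar X_\epsilon(\bar\tau_\epsilon)=-1) \;\longrightarrow\; \Pb_x(X(\tau)=-1),\qquad \epsilon\to 0.
\]

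For the explicit value of the limit, I would apply Theorem \ref{thm:probab_hard} with $a=-1$, $b=1$ and $\beta^+=\beta^-=\beta$. Formulas \eqref{eq:tau_1_tau_b} simplify to $\Pb_{0-}(\tau_{-1}<\tau_1)=(\beta+1)/(2\beta+1)$ and $\Pb_{0+}(\tau_{-1}<\tau_1)=\beta/(2\beta+1)$. Inserting these into \eqref{eq:P_tau_a_b} and simplifying gives, for $x\in[-1,0)$,
\[
\Pb_x(\tau_{-1}<\tau_1) \;=\; -x + (1+x)\,\frac{\beta+1}{2\beta+1} \;=\; \frac{1+(1-x)\beta}{2\beta+1},
\]
and for $x\in(0,1]$,
\[
\Pb_x(\tau_{-1}<\tau_1) \;=\; (1-x)\,\frac{\beta}{2\beta+1} \;=\; \frac{(1-x)\beta}{2\beta+1},
\]
which is exactly \eqref{eq:prob_1}.

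The main obstacle is the continuity-of-exit step, since weak convergence alone does not imply convergence of hitting-point distributions: the map $\omega\mapsto\omega(\tau(\omega))$ is not continuous on the whole path space, because limit paths that touch $\pm 1$ without crossing create discontinuities. For the SNOB $X$, however, this bad set has probability zero, because outside any neighbourhood of the membrane $X$ evolves as a genuine Brownian motion and hence a.s.\ crosses every level transversally. This is precisely the content of Lemma \ref{lem:cont_tau}, which packages the technical step and allows the argument above to go through.
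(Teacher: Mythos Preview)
Your proposal is correct and follows essentially the same route as the paper: invoke Theorem~\ref{thm:barX_hardMembrane} for weak convergence to the hard-membrane process, apply Lemma~\ref{lem:cont_tau} (whose hypotheses \eqref{eq:star}--\eqref{eq:twostar} hold because $X$ behaves as Brownian motion away from the membrane) to pass to the exit-point distribution, and then substitute $a=-1$, $b=1$, $\beta^\pm=\beta$ into Theorem~\ref{thm:probab_hard} to obtain \eqref{eq:prob_1}. Your explicit arithmetic verifying the two cases of \eqref{eq:P_tau_a_b} is accurate and slightly more detailed than what the paper records.
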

The proof of the ccorollary follows from Lemma \ref{lem:cont_tau}. Formula \eqref{eq:prob_1} is obtained by substituting $a=-1$, $b=1$, $\beta^+=\beta^-=\beta$ into \eqref{eq:P_tau_a_b}.

\begin{lemma}\label{lem:cont_tau}
    Assume that a sequence of c\`adl\`ag processes $(Z_\epsilon)$ converges in distribution to a process $Z_0$ in the space $D([0,\infty))$ of c\` adl\`ag functions with Skorokhod's topology, where $Z_0$ is a continuous process. Let 
    \[
    \lim_{\epsilon\to0} a_\epsilon=a_0<b_0=\lim_{\epsilon\to0} b_\epsilon
    \]
    Define the stopping times
    \[
    \tau_{\epsilon, a }:=\inf\{t\geq 0 :\ Z_\epsilon(t)\leq a_\epsilon\}, \tau_{\epsilon,b}:=\inf\{t\geq 0 :\ Z_\epsilon(t)\geq b_\epsilon\}. 
    \]
    Assume that 
    \begin{equation}\label{eq:star} 
       \Pb(\tau_{0,a_0}=+\infty \ \text{or } \ \forall \delta>0 \ \exists t_\pm\in(\tau_{0,a_0}, \tau_{0,a}+\delta) :\ Z_0(t_+)>a_0, \ Z_0(t_-)<a_0)=1, 
    \end{equation}
    and 
    \begin{equation}\label{eq:twostar} 
    \Pb(\tau_{0,b_0}=+\infty \ \text{or } \ \forall \delta>0 \ \exists t_\pm\in(\tau_{0,b_0}, \tau_{0,b_0}+\delta) :\ Z_0(t_+)>b_0, \ Z_0(t_-)<b_0)=1.
    \end{equation}
    Then we have the convergence in distribution 
    \[    Z_\epsilon(\cdot\wedge\tau_{\epsilon,a_\epsilon}\wedge\tau_{\epsilon,b_\epsilon}), \tau_{\epsilon,a_\epsilon}, \tau_{\epsilon,b_\epsilon})\rightarrow (Z_0(\cdot\wedge\tau_{0,a_0}\wedge\tau_{0,b_0}), \tau_{0,a_0},  \tau_{0,b_0})
    \]
    in the space $D([0,\infty))\times(\mbR\cup\{+\infty\})^2.$
\end{lemma}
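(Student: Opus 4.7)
The plan is to invoke Skorokhod's representation theorem to upgrade convergence in distribution to almost-sure convergence and then work pathwise. Since $Z_0$ is continuous, convergence to $Z_0$ in the Skorokhod topology of $D([0,\infty))$ is equivalent to uniform convergence on every compact time interval, so we may assume that on a common probability space $Z_\epsilon \to Z_0$ almost surely, uniformly on compacts. We would then restrict attention to the full-measure event on which this convergence holds and on which the hypotheses \eqref{eq:star} and \eqref{eq:twostar} are satisfied, and prove the convergence pathwise there; convergence in distribution of the triple in $D([0,\infty)) \times (\mathbb{R} \cup \{+\infty\})^2$ will then follow from almost-sure convergence in the product space.

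The core of the argument is establishing the almost-sure convergences $\tau_{\epsilon, a_\epsilon} \to \tau_{0, a_0}$ and $\tau_{\epsilon, b_\epsilon} \to \tau_{0, b_0}$. For $\tau_{\epsilon, a_\epsilon}$ I would split into a lower and an upper bound. For the lower bound, for any $s < \tau_{0, a_0}$ the quantity $\inf_{t \in [0,s]} (Z_0(t) - a_0)$ is a strictly positive minimum over a compact interval, so uniform convergence together with $a_\epsilon \to a_0$ forces $Z_\epsilon(t) > a_\epsilon$ throughout $[0, s]$ for $\epsilon$ small, giving $\liminf_\epsilon \tau_{\epsilon, a_\epsilon} \geq s$; letting $s \uparrow \tau_{0, a_0}$ closes this direction. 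For the upper bound, if $\tau_{0, a_0} = +\infty$ there is nothing to show; otherwise, for each $\delta > 0$ condition \eqref{eq:star} supplies a point $t_- \in (\tau_{0, a_0}, \tau_{0, a_0} + \delta)$ with $Z_0(t_-) < a_0$, and uniform convergence then yields $Z_\epsilon(t_-) < a_\epsilon$ for $\epsilon$ small, so $\tau_{\epsilon, a_\epsilon} \leq t_- < \tau_{0, a_0} + \delta$. The argument for $\tau_{\epsilon, b_\epsilon}$ is symmetric, using \eqref{eq:twostar}.

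Once both hitting times converge, their minimum $\tau_\epsilon \to \tau_0$ as well. To pass to the stopped process, I would fix $T > 0$ and bound
\[
|Z_\epsilon(t \wedge \tau_\epsilon) - Z_0(t \wedge \tau_0)| \leq \sup_{s \leq T} |Z_\epsilon(s) - Z_0(s)| + |Z_0(t \wedge \tau_\epsilon) - Z_0(t \wedge \tau_0)|
\]
uniformly in $t \in [0,T]$; the first term goes to zero by uniform convergence on compacts, and the second by uniform continuity of $Z_0$ on bounded intervals combined with $\tau_\epsilon \to \tau_0$ (treating $\tau_0 = +\infty$ separately, using that then $\tau_\epsilon > T$ for $\epsilon$ small, so both time-changed arguments coincide with $t$ on $[0,T]$). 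This produces uniform convergence of the stopped processes on compacts, which is equivalent to Skorokhod convergence since the limit is continuous, and the joint almost-sure convergence of the triple gives the claimed convergence in distribution.

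The main obstacle is the upper semicontinuity of the hitting times: this is precisely where the non-tangential hitting hypotheses \eqref{eq:star} and \eqref{eq:twostar} are indispensable, because without them $Z_0$ could merely graze the level $a_0$ from above without strictly crossing it, and in that situation $\tau_{\epsilon, a_\epsilon}$ can blow up (even become infinite) while $\tau_{0, a_0}$ is finite, destroying the convergence. The lower semicontinuity, by contrast, is a soft consequence of continuity of $Z_0$ and does not need any additional hypothesis.
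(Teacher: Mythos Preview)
Your proposal is correct and follows exactly the approach the paper indicates: the paper's entire proof reads ``The proof follows easily from Skorokhod's representation theorem,'' and you have simply filled in the pathwise details that this sentence summarizes. One small refinement: in your bound for the stopped-process convergence, the supremum should be taken over an interval slightly larger than $[0,T]$ (say $[0,T+\tau_0+1]$ when $\tau_0<\infty$) since $t\wedge\tau_\epsilon$ may exceed $T$; this is harmless because uniform convergence holds on every compact.
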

The proof follows easily from Skorokhod's representation theorem.

Note that the Brownian motion with the hard membrane  satisfies the conditions \eqref{eq:star} and \eqref{eq:twostar} of the Lemma for any $a_0,b_0\neq 0$, since the Brownian motion itself satisfies these conditions.

Let us introduce a Brownian motion with a hard membrane at 0 and reflection at $\pm1$ as a process taking values in $[-1,1]$ and having the following properties. Initially, it behaves as a Brownian motion with a hard membrane until it exits the interval $(-\frac{2}{3}, \frac{2}{3}).$ Then it continues, independently of the past, as a Brownian motion with reflection at $-1$ (reflection upward) or 1 (reflection downward) until it enters the interval $[-\frac{1}{3}, \frac{1}{3}]$. After that, it again behaves as a Brownian motion with a hard membrane until it exits   $(-\frac{2}{3}, \frac{2}{3}),$ and so on. The following result is a consequence of Lemma \ref{lem:cont_tau} and Theorem \ref{th: Pilip1}.
\begin{theorem}
      Let $\tilde X_\epsilon$ be a solution to the SDE \eqref{eq:prelimit hard 1},
where
\[
\overline{\chi}_{\epsilon}\left(  x\right)  =\chi_{\epsilon,bdr}\left(
x-1\right)  \chi_{\epsilon,bdr}\left(  x+1\right)  \chi_{\epsilon}\left(
x\right),
\]  
$ {\chi}_{\epsilon}\left(  x\right)$ is an even function defined by \eqref{eq:chi_e}, 
% such that
%     \[
%     \chi_{\epsilon}\left(  x\right)  =\left\{
% \begin{array}
% [c]{ccc}%
% \frac{1}{2}\left(  \frac{x}{\epsilon}\right)  ^{\alpha} & \text{for} &
% x\in\left[  0,\epsilon\right), \\
% -\frac{1}{2}\frac{\left\vert x-2\epsilon\right\vert ^{\alpha}}{\epsilon
% ^{\alpha}}+1 & \text{for} & x\in\left[  \epsilon,2\epsilon\right),  \\
% 1 & \text{for} & x\geq2\epsilon,
% \end{array}
% \right.
%     \]
 $\chi_{\epsilon,bdr}$ {is equal to $\chi_\epsilon$}.
 
 If $\kappa_T^\epsilon$, $\kappa_\epsilon$, and $\alpha>0$ satisfy  
 the conditions of Theorem \ref{th:concergence without boundaties}, either in case of 1(a) or 2(b), then 
 \begin{equation}\label{eq:main_conv}
 (\tilde X_\epsilon(\cdot\wedge\tau_{\epsilon,-1}\wedge\tau_{\epsilon,1}), \tau_{\epsilon,-1}, \tau_{\epsilon,1})\Rightarrow
 ( X(\cdot\wedge\tau_{-1}\wedge\tau_{ 1}), \sigma_{ -1}, \sigma_{ 1}),
 \end{equation}
 where $X$ is a Brownian motion with a hard membrane and permeability parameters defined in 1(a) or 2(b), respectively, with refection at the points $- 1$ and $1$ (up and down, respectively),
\[\tau_{\epsilon,\pm1}=\inf\{t\geq 0\colon \tilde X_\epsilon(t)=\pm1\},\]
\[\sigma_{ \pm1}=\inf\{t\geq 0\colon l_{\pm1}(t)=\operatorname{Exp}_\pm(\beta)\},\]
$l_\pm$ is the symmetric local time of $X$ at $\pm1, $ respectively, and $\operatorname{Exp}_\pm(\beta)$ are independent exponentially distributed random variables with parameter $\beta$ that are also independent of $X$. 

If $\kappa_T^\epsilon$, $\kappa_\epsilon$, and $\alpha>0$ satisfy  
 the conditions of 1(b), 2(b), or 3 in Theorem \ref{th:concergence without boundaties}, then the convergence \eqref{eq:main_conv} holds with $X$ being a Brownian motion with refection at the points $-1$ and $1$.

In the case when $\kappa_T^\epsilon$, $\kappa_\epsilon$, and $\alpha>0$ satisfy  
 the conditions of 1(c) or 2(c), the process $X$ in \eqref{eq:main_conv} is a Brownian motion reflected at $0, -1$ and $1$.
\end{theorem}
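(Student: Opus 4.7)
The strategy is to combine the hard-membrane convergence at $x=0$ already established for $\bar X_\epsilon$ in Theorem~\ref{th:concergence without boundaties} with analogous local analyses at $x=\pm 1$, and then to glue the three regimes together via the strong Markov property and Lemma~\ref{lem:cont_tau}. A key simplification comes from the hypothesis $\chi_{\epsilon,bdr}=\chi_\epsilon$ and the fact that, for $\epsilon<1/4$, the three perturbation layers $[-1,-1+2\epsilon]$, $[-2\epsilon,2\epsilon]$, $[1-2\epsilon,1]$ are pairwise disjoint. Consequently the drift of \eqref{eq:prelimit hard 1} splits into three localized pieces: in the central layer it coincides with the drift of $\bar X_\epsilon$; in the layers at $\pm 1$ it is obtained from that same drift by the affine change of variable $x\mapsto 1\mp x$; and outside these three layers the drift vanishes, so $\tilde X_\epsilon$ is an ordinary Brownian motion there.

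I would first extend the scale-function computation from the proof of Theorem~\ref{th:concergence without boundaties} to the two boundary layers, defining the exit probabilities $p_{\epsilon,\pm 1}:=\Pb(\sigma^{(\epsilon)}_{\pm 1}<\sigma^{(\epsilon)}_{\pm(1-4\epsilon)}\mid \tilde X_\epsilon(0)=\pm(1-2\epsilon))$. Because $\chi_{\epsilon,bdr}=\chi_\epsilon$, the substitution $y\mapsto 1\mp y$ converts the integrals appearing in the scale function into exactly those already treated for $\bar p_\epsilon^\pm$, so the same asymptotic analysis yields $\epsilon^{-1}p_{\epsilon,\pm 1}\to\beta$ with the same constant $\beta$ as at the central barrier. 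I would then invoke Theorem~\ref{th: Pilip1}: case~(3) at $0$ produces the hard-membrane behaviour on excursions through the central layer; case~(1), applied to the translated process $Y_\epsilon:=1-\tilde X_\epsilon$, yields joint convergence of the stopped process and of $\tau_{\epsilon,+1}$ to a reflected Brownian motion at $+1$ together with the first time an independent $\operatorname{Exp}(\beta)$ clock is exceeded by its symmetric local time at $+1$; case~(2), applied to $Y_\epsilon:=1+\tilde X_\epsilon$, gives the symmetric statement at $-1$.

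To assemble the three local convergences into the global statement \eqref{eq:main_conv}, I would use the strong Markov property at the intermediate levels $\pm 2\epsilon$ and $\pm(1-2\epsilon)$ (where, between layers, $\tilde X_\epsilon$ is an ordinary Brownian motion) together with Skorokhod's representation theorem and Lemma~\ref{lem:cont_tau}; the oscillation conditions \eqref{eq:star}--\eqref{eq:twostar} at these intermediate levels are immediate from the classical crossing properties of Brownian motion. The remaining regimes stated in the theorem are handled by the same argument, replacing case~(3) of Theorem~\ref{th: Pilip1} at $0$ by the degenerate limits corresponding to cases 1(b), 2(b), 3 (no central membrane) or to cases 1(c), 2(c) (fully reflecting central membrane) of Theorem~\ref{th:concergence without boundaties}. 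The main obstacle, in my view, is precisely the patching step: one must verify that the excursion-by-excursion convergences provided by Theorem~\ref{th: Pilip1} assemble into joint convergence of the triple $(\tilde X_\epsilon(\cdot\wedge\tau_{\epsilon,-1}\wedge\tau_{\epsilon,1}),\tau_{\epsilon,-1},\tau_{\epsilon,1})$, and that in the limit the two exponential clocks at $\pm 1$ and the hard-membrane switchings at $0$ are mutually independent---a property which should be inherited from the mutual independence of disjoint excursions of the driving Wiener process via the strong Markov property.
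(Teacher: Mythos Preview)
Your proposal is correct and follows the paper's own route: the paper states only that the result ``is a consequence of Lemma~\ref{lem:cont_tau} and Theorem~\ref{th: Pilip1}'' and gives no further argument, and your outline is precisely an expansion of how those two ingredients combine---local convergence at each of the three layers via Theorem~\ref{th: Pilip1} (reusing the scale-function asymptotics from the proof of Theorem~\ref{th:concergence without boundaties}, transported to $\pm1$ by the affine substitution enabled by $\chi_{\epsilon,bdr}=\chi_\epsilon$), followed by patching through Lemma~\ref{lem:cont_tau} and the strong Markov property. One small remark: for the gluing levels the paper uses fixed thresholds $\pm\tfrac13,\pm\tfrac23$ (see the definition of Brownian motion with hard membrane at $0$ and reflection at $\pm1$ just above the theorem) rather than $\epsilon$-dependent ones, which slightly simplifies the application of Lemma~\ref{lem:cont_tau}.
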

%The proof follows from \ap{Theorem \ref{th: Pilip1}}. 
\vskip 5 pt

\begin{remark} Note that
\[ 
\lim_{\epsilon\to0}\Pb_x(\tau_{-1,\epsilon}<\tau_{\epsilon,1})=\Pb_x(\sigma_{-1}<\sigma_1).
\]
The probabilities $\Pb_x(\sigma_{-1} < \sigma_1)$ can be derived analogously to the proof of Theorem~\ref{thm:probab_hard}; we therefore state the corresponding result without proof.
         \begin{equation}\label{eq:P_tau_a_b1}
    \Pb_x(\sigma_{-1}<\sigma_1)=\begin{cases}
        -x\Pb_{(-1)+}(\sigma_{-1}<\sigma_1)+(1+x) \Pb_{0-}(\sigma_{-1}<\sigma_1),& x\in (-1,0),\\
         (1-x)\Pb_{0+}(\sigma_{-1}<\sigma_1)+ x\Pb_{1-}(\sigma_{-1}<\sigma_1),& x\in(0,1).
    \end{cases}
  \end{equation} 
    where 
     \begin{eqnarray}
     \Pb_{(-1)+}(\sigma_{-1}<\sigma_1)&=&  \Pb_{(-1)+}(\sigma_{-1}<\tau_{0-})+   \Pb_{(-1)+}(\sigma_{-1}>\tau_{0-})\Pb_{0-}(\sigma_{-1}<\sigma_1), \nonumber\\
    \Pb_{0-}(\sigma_{-1}<\sigma_1)&=&  \Pb_{0-}(\tau_{(-1)+}<\tau_{0+}) \Pb_{(-1)+}(\sigma_{-1}<\sigma_1)\\ & & \quad\quad+\Pb_{0-}(\tau_{(-1)+}>\tau_{0+}) \Pb_{0+}(\sigma_{-1}<\sigma_1), \nonumber\\
   \Pb_{0+}(\sigma_{-1}<\sigma_1)&=&    \Pb_{0+}(\tau_{0-}<\tau_{1-}) \Pb_{0-}(\sigma_{-1}<\sigma_1)\\ && \quad\quad+\Pb_{0+}(\tau_{0-}>\tau_1-) \Pb_{1-}(\sigma_{-1}<\sigma_1), \nonumber\\
  \Pb_{1-}(\sigma_{-1}<\sigma_1)&=& \Pb_{1-}(\tau_{0+}<\sigma_1) \Pb_{0+}(\sigma_{-1}<\sigma_1). \label{eq:systemLE}
    \end{eqnarray}
    Here $\tau_a=\inf\{t\geq 0: \tilde X_t=a\}$, the points $(-1)+, 1-$ are defined analogously to $0+, 0-$.

 It follows from  \eqref{eq:tau_1_tau_b}, \eqref{eq:probab_elastic} that
  \begin{eqnarray*}
     \Pb_{(-1)+}(\sigma_{-1}<\tau_0-)&=& 1- \Pb_{(-1)+}(\sigma_{-1}>\tau_0-)=1-\frac{1}{1+\beta}=\frac{\beta}{1+\beta}\\
   &=&1-\Pb_{0-}(\tau_{(-1)+}<\tau_{0+})=\Pb_{0-}(\tau_{(-1)+}>\tau_{0+})\\
   &=& 1-\Pb_{0+}(\tau_{0-}<\tau_{1-}) = \Pb_{0+}(\tau_{0-}>\tau_{1-})\\
     &=&\Pb_{1-}(\tau_{0+}>\sigma_1) =1-\Pb_{1-}(\tau_{0+}<\sigma_1).
    \end{eqnarray*}
Substituting this into the system of linear equations \eqref{eq:systemLE} yields the desired probabilities.
\end{remark}

\subsection{Link with Subsection 2.1}

In Section 2.1 we have introduced the heat flux $\phi_{q}^{\epsilon}$ in the
stationary regime and, in the particular case $\alpha=1$ and
\[
\kappa_{\epsilon}=\left(  K\epsilon\right)  ^{2}\kappa_{T}^{\epsilon}%
\]
we have proved heuristically the (approximate) formula%
\[
\phi_{q}^{\epsilon}=\kappa_{T}^{\epsilon}K\frac{2}{2K+\pi}\frac{T^{+}}{2}.
\]
The coefficient
\[
c_{\epsilon}:=\kappa_{T}^{\epsilon}K\frac{2}{2K+\pi}%
\]
is like an \textit{effective thermal conductivity coefficient}: it is the
thermal conductivity of an homogeneous system which has the same heat flow as
our system with a barrier. 

In this section we have introduced the permeability parameters $\beta
^{+}=\beta^{-}$ of the symmetric hard membrane process. Recall, moreover, that
the hard membrane process is the limit of the process  $\widetilde{X}%
_{\epsilon}$ obtained from the original process $X_{\epsilon}$ by means of a
time change. Our aim in this final subsection is to recover heuristically the
result of Subsection 2.1 from the formulae proved in this Section.

We have proved above, for the hard membrane process, that
\[
\Pb_{0-}\left(  \tau_{1}<\tau_{-1}\right)  =\frac{\beta^{-}}{2\beta^{-}+1}.
\]
This is a Markov chain probability of heat flow from the left of the barrier
to the right hand, complementary to the probability of heat flow from the left
of the barrier to the right hand, given by $1-\frac{\beta^{-}}{2\beta^{-}+1}$.
This probability, multiplied by the speed of the process, is equal to
$c_{\epsilon}$, since it represents the percentage of heat flowing to the
right in the unit of time. Namely, if $v_{\epsilon}$ is the speed of the
process, we have%

\begin{equation}
c_{\epsilon}=v_{\epsilon}\frac{\beta^{-}}{2\beta^{-}+1}.\label{link}%
\end{equation}

Recall from Theorem 10 that (by a simple computation of the integral)
\[
\beta^{+}=\frac{K}{\pi}.
\]
Then%
\[
\frac{\beta^{-}}{2\beta^{-}+1}=\frac{K}{2K^{-}+\pi}.
\]
Moreover, recall that the time change is%
\begin{align*}
\beta_{t}^{\epsilon}  & =2\kappa_{T}^{\epsilon}\int_{0}^{t}\left(
\frac{\kappa_{\epsilon}}{\kappa_{T}^{\epsilon}}+\overline{\chi}_{\epsilon}%
^{2}\left(  X_{\epsilon}\left(  s\right)  \right)  \right)  ds\\
& \sim2\kappa_{T}^{\epsilon}t.
\end{align*}
hence, in the limit as $\epsilon\rightarrow0$, it is naturally approximated
by
\[
\beta_{t}^{\epsilon}\sim2\kappa_{T}^{\epsilon}t.
\]
In other words, we have $v_{\epsilon}=2\kappa_{T}^{\epsilon}$. Hence
$v_{\epsilon}\frac{\beta^{-}}{2\beta^{-}+1}$ is equal to $c_{\epsilon}$.

\section{Appendix} \label{Append}

In this appendix we give the proof of Theorem \ref{Thm diffusion limit}. We divide it into several
steps. 

\textbf{Step 0}. Consider the Hilbert space $H=L^{2}\left(  D\right)  $.
Denote by $W^{k,p}\left(  D\right)  $ the Sobolev spaces with positive integer
differentiability degree $k$ and integrability power $p\geq1$ ($D^{\alpha}f\in
L^{p}\left(  D\right)  $ for every multi-index $\alpha$ such that $\left\vert
\alpha\right\vert =k$). Denote by $\mathcal{D}\left(  A_{\epsilon}\right)  $ the
subspaces of $L^{2}\left(  D\right)  $ defined as
\[
\mathcal{D}\left(  A_{\epsilon}\right)  =\left\{  f\in W^{2,2}\left(  D\right)
:f\left(  -1,y\right)  =f\left(  1,y\right)  =0\text{, }f\left(  x,0\right)
=f\left(  x,2\pi\right)  \right\}  .
\]
Assume that the coefficients $\overline{\chi}_{\epsilon}^{2}$ are smooth. Then
it is known (see \cite{AgrestiLuongo} sections A1-A2) that the operator
$A_{\epsilon}:\mathcal{D}\left( A_{\epsilon}\right)  \rightarrow L^{2}\left(  D\right)
$ defined as%
\[
A_{\epsilon}f=\kappa_{\epsilon}\Delta f+\kappa_{T}^{\epsilon}%
\operatorname{div}\left(  \overline{\chi}_{\epsilon}^{2}\nabla f\right)
\]
is the infinitesimal generator of a selfadjoint strongly continuous semigroup
(in fact an analytic semigroup; see \cite{Pazy} for general details),
$e^{tA_{\epsilon}}:H\rightarrow H$, $t\geq0$. If $\phi\in \mathcal{D}\left(
A_{\epsilon}\right)  $, then $u\left(  t\right)  =e^{tA_{\epsilon}}\phi$
belongs to $C\left(  0,T;\mathcal{D}\left(  A_{\epsilon}\right)  \right)  $ and to
$C^{1}\left(  0,T;H\right)  $, for every $T>0$,\ and solves uniquely the
Cauchy problem%
\[
\frac{du\left(  t\right)  }{dt}=A_\epsilon u\left(  t\right)  ,\qquad t\geq0,\qquad
u\left(  0\right)  =\phi.
\]
Moreover, if%
\[
\phi\in \mathcal{D}\left(  A_{\epsilon}^{2}\right)  =\left\{  f\in \mathcal{D}\left(  A_{\epsilon
}\right)  :A_{\epsilon}f\in \mathcal{D}\left(  A_{\epsilon}\right)  \right\}
\]%
\[
=\left\{  f\in W^{4,2}\left(  D\right)  \cap \mathcal{D}\left(  A_{\epsilon}\right)
:\left(  A_{\epsilon}f\right)  \left(  -1,y\right)  =\left(  A_{\epsilon
}f\right)  \left(  1,y\right)  \text{, }\left(  A_{\epsilon}f\right)  \left(
x,0\right)  =\left(  A_{\epsilon}f\right)  \left(  x,2\pi\right)  \right\}
\]
then $u\left(  t\right)  =e^{tA_{\epsilon}}\phi$ has the additional regularity
$C\left(  0,T;\mathcal{D}\left(  A_{\epsilon}^{2}\right)  \right)  $ and to
$C^{1}\left(  0,T;\mathcal{D}\left(  A_{\epsilon}\right)  \right)  $. Finally, by
Sobolev embedding in 2D we have $W^{4,2}\left(  D\right)  \subset C^{1}\left(
D\right)  $, hence also $\mathcal{D}\left(  A_{\epsilon}^{2}\right)  \subset
C^{1}\left(  D\right)  $, with continuous injection.

From these facts it follows that, if $\phi\in \mathcal{D}\left(  A_{\epsilon}%
^{2}\right)  $, then
\begin{equation}
\sup_{t\in\left[  0,T\right]  }\left\Vert \nabla e^{tA_{\epsilon}}%
\phi\right\Vert _{\infty}\leq C\left(  T,\phi\right)
\label{regularity inequality},%
\end{equation}
where the constant $C\left(  T,\phi\right)  $ depends on $T$ and $\left\Vert
A_{\epsilon}^{2}\phi\right\Vert _{H}+\left\Vert \phi\right\Vert _{H}$. The
inequality (\ref{regularity inequality}) certainly holds true in a larger
generality than the condition $\phi\in \mathcal{D}\left(  A_{\epsilon}^{2}\right)  $,
using interpolation spaces, but the arguments are longer and we do not need
them for the purpose of this work. A theory of fractional powers and
interpolation spaces is also developed in \cite{AgrestiLuongo}, see Theorem 2.1.

\textbf{Step 1}: precise form of the covariance function and the additional
elliptic operator. 

Above, we have introduced the covariance function
$Q_{N,\epsilon}\left(  \mathbf{x},\mathbf{x}^\prime\right)  $. It decomposes into the
addends $Q_{N,\epsilon}^{\left(  1\right)  }\left(  \mathbf{x},\mathbf{x}^\prime%
\right)  $, ..., $Q_{N,\epsilon}^{\left(  8\right)  }\left(  \mathbf{x}%
,\mathbf{x}^\prime\right)  $, where%
\begin{align}
Q_{N,\epsilon}^{\left(  1\right)  }\left(  \mathbf{x},\mathbf{x}^\prime\right)    \label{eq:Q1}&
=\frac{1}{2}\frac{1}{c_{N}^{2}}\sum_{\mathbf{k}\in K_{N}\cap\mathbb{Z}_{+}%
^{2}}\frac{\mathbf{k}^{\perp}\otimes\mathbf{k}^{\perp}}{\left\vert
\mathbf{k}\right\vert ^{2}}\overline{\chi}_{\epsilon}\left(  x\right)
\overline{\chi}_{\epsilon}\left(  x^{\prime}\right)  \cos\left(
\mathbf{k}\cdot\mathbf{x}\right)  \cos\left(  \mathbf{k}\cdot\mathbf{x}%
^{\prime}\right),  \\
Q_{N,\epsilon}^{\left(  2\right)  }\left(  \mathbf{x},\mathbf{x}^\prime\right)    \label{eq:Q2}&
=\frac{1}{2}\frac{1}{c_{N}^{2}}\sum_{\mathbf{k}\in K_{N}\cap\mathbb{Z}_{+}%
^{2}}\frac{\mathbf{k}^{\perp}\otimes\nabla^{\perp}\overline{\chi}_{\epsilon
}\left(  x^{\prime}\right)  }{\left\vert \mathbf{k}\right\vert ^{2}}%
\cos\left(  \mathbf{k}\cdot\mathbf{x}\right)  \sin\left(  \mathbf{k}%
\cdot\mathbf{x}^{\prime}\right)  \overline{\chi}_{\epsilon}\left(  x\right),
\\
Q_{N,\epsilon}^{\left(  3\right)  }\left(  \mathbf{x},\mathbf{x}^\prime\right)   &
=\frac{1}{2}\frac{1}{c_{N}^{2}}\sum_{\mathbf{k}\in K_{N}\cap\mathbb{Z}_{+}%
^{2}}\frac{\nabla^{\perp}\overline{\chi}_{\epsilon}\left(  x\right)
\otimes\mathbf{k}^{\perp}}{\left\vert \mathbf{k}\right\vert ^{2}}\sin\left(
\mathbf{k}\cdot\mathbf{x}\right)  \cos\left(  \mathbf{k}\cdot\mathbf{x}%
^{\prime}\right)  \overline{\chi}_{\epsilon}\left(  x^{\prime}\right),   \\
Q_{N,\epsilon}^{\left(  4\right)  }\left(  \mathbf{x},\mathbf{x}^\prime\right)    &
=\frac{1}{2}\frac{1}{c_{N}^{2}}\sum_{\mathbf{k}\in K_{N}\cap\mathbb{Z}_{+}%
^{2}}\frac{\nabla^{\perp}\overline{\chi}_{\epsilon}\left(  x\right)
\otimes\nabla^{\perp}\overline{\chi}_{\epsilon}\left(  x^{\prime}\right)
}{\left\vert \mathbf{k}\right\vert ^{2}}\sin\left(  \mathbf{k}\cdot
\mathbf{x}\right)  \sin\left(  \mathbf{k}\cdot\mathbf{x}^{\prime}\right), \label{eq:Q3}
\end{align}
and%
\begin{align}
Q_{N,\epsilon}^{\left(  5\right)  }\left(  \mathbf{x},\mathbf{x}^\prime\right)    &
=\frac{1}{2}\frac{1}{c_{N}^{2}}\sum_{\mathbf{k}\in K_{N}\cap\mathbb{Z}_{-}%
^{2}}\frac{\mathbf{k}^{\perp}\otimes\mathbf{k}^{\perp}}{\left\vert
\mathbf{k}\right\vert ^{2}}\overline{\chi}_{\epsilon}\left(  x\right)
\overline{\chi}_{\epsilon}\left(  x^{\prime}\right)  \sin\left(
\mathbf{k}\cdot\mathbf{x}\right)  \sin\left(  \mathbf{k}\cdot\mathbf{x}%
^{\prime}\right), \label{eq:Q5} \\
Q_{N,\epsilon}^{\left(  6\right)  }\left(  \mathbf{x},\mathbf{x}^\prime\right)    &
=-\frac{1}{2}\frac{1}{c_{N}^{2}}\sum_{\mathbf{k}\in K_{N}\cap\mathbb{Z}%
_{-}^{2}}\frac{\mathbf{k}^{\perp}\otimes\nabla^{\perp}\overline{\chi
}_{\epsilon}\left(  x^{\prime}\right)  }{\left\vert \mathbf{k}\right\vert
^{2}}\sin\left(  \mathbf{k}\cdot\mathbf{x}\right)  \overline{\chi}_{\epsilon
}\left(  x\right)  \cos\left(  \mathbf{k}\cdot\mathbf{x}^{\prime}\right), \label{eq:Q6} \\
Q_{N,\epsilon}^{\left(  7\right)  }\left(  \mathbf{x},\mathbf{x}^\prime\right)    &
=-\frac{1}{2}\frac{1}{c_{N}^{2}}\sum_{\mathbf{k}\in K_{N}\cap\mathbb{Z}%
_{-}^{2}}\frac{\nabla^{\perp}\overline{\chi}_{\epsilon}\left(  x\right)
\otimes\mathbf{k}^{\perp}}{\left\vert \mathbf{k}\right\vert ^{2}}\cos\left(
\mathbf{k}\cdot\mathbf{x}\right)  \sin\left(  \mathbf{k}\cdot\mathbf{x}%
^{\prime}\right)  \overline{\chi}_{\epsilon}\left(  x^{\prime}\right),  \label{eq:Q7} \\
Q_{N,\epsilon}^{\left(  8\right)  }\left(  \mathbf{x},\mathbf{x}^\prime\right)    &
=\frac{1}{2}\frac{1}{c_{N}^{2}}\sum_{\mathbf{k}\in K_{N}\cap\mathbb{Z}_{-}%
^{2}}\frac{\nabla^{\perp}\overline{\chi}_{\epsilon}\left(  x\right)
\otimes\nabla^{\perp}\overline{\chi}_{\epsilon}\left(  x^{\prime}\right)
}{\left\vert \mathbf{k}\right\vert ^{2}}\cos\left(  \mathbf{k}\cdot
\mathbf{x}\right)  \cos\left(  \mathbf{k}\cdot\mathbf{x}^{\prime}\right)  \label{eq:Q8}.
\end{align}
On the diagonal, we may summarize these terms as%
\[
Q_{N,\epsilon}\left(  \mathbf{x},\mathbf{x}\right)  =\overline{Q}_{N,\epsilon
}\left(  \mathbf{x},\mathbf{x}\right)  +R_{N,\epsilon}\left(  \mathbf{x}%
,\mathbf{x}\right),
\]%
where
\[
\overline{Q}_{N,\epsilon}\left(  \mathbf{x},\mathbf{x}\right)  =Q_{N,\epsilon
}^{\left(  1\right)  }\left(  \mathbf{x},\mathbf{x}\right)  +Q_{N,\epsilon
}^{\left(  5\right)  }\left(  \mathbf{x},\mathbf{x}\right)  =2\overline{\chi
}_{\epsilon}^{2}\left(  x\right)  I_{2},%
\]%
$I_{2}$ is the identity matrix in two dimensions,
\begin{align*}
R_{N,\epsilon}\left(  \mathbf{x},\mathbf{x}\right)    & =Q_{N,\epsilon
}^{\left(  2\right)  }\left(  \mathbf{x},\mathbf{x}\right)  +Q_{N,\epsilon
}^{\left(  3\right)  }\left(  \mathbf{x},\mathbf{x}\right)  +Q_{N,\epsilon
}^{\left(  4\right)  }\left(  \mathbf{x},\mathbf{x}\right)  \\
& +Q_{N,\epsilon}^{\left(  6\right)  }\left(  \mathbf{x},\mathbf{x}\right)
+Q_{N,\epsilon}^{\left(  7\right)  }\left(  \mathbf{x},\mathbf{x}\right)
+Q_{N,\epsilon}^{\left(  8\right)  }\left(  \mathbf{x},\mathbf{x}\right)  .
\end{align*}
Important is the fact that $\|Q_{N,\varepsilon}\|_{\infty}$ is bounded above by a constant independent of $N$ (and also of~$\varepsilon$). Indeed, this property holds for $\overline{Q}_{N,\epsilon}\left(  \mathbf{x},\mathbf{x}\right)  $ and, even more
easily, for $Q_{N,\epsilon}\left(  \mathbf{x},\mathbf{x}\right)  $.

\textbf{Step 2}: reduction to homogeneous boundary conditions and mild
formulation. 

The reduction to homogeneous boundary conditions, necessary for
the implementation of the semigroup approach, is simply based on the
subtraction of an interpolation of the boundary conditions. Called
$\gamma\left(  x,y\right)  $ the function in $D$ defined as
\[
\gamma\left(  x,y\right)  =T_{+}\frac{1-x}{2}%
\]
which is equal to $T_{+}$ at $x=-1$ and to zero at $x=1$, we set
\[
\widetilde{T}_{N,\epsilon}\left(  x,y,t\right)  =T_{N,\epsilon}\left(
x,y,t\right)  -\gamma\left(  x,y\right)
\]
so that%
\[
\widetilde{T}_{N,\epsilon}\left(  -1,y,t\right)  =\widetilde{T}_{N,\epsilon
}\left(  1,y,t\right)  =0
\]
and $\widetilde{T}_{N,\epsilon}$ is periodic in the vertical direction. We now
have%
\begin{align*}
& d\widetilde{T}_{N,\epsilon}+\sqrt{\kappa_{T}^{\epsilon}}\sum_{\mathbf{k}%
\in\mathbb{Z}_{0}^{2}}C\left(  \mathbf{k},N\right)  \mathbf{\sigma
}_{\mathbf{k}}^{\epsilon}\left(  x,y\right)  \cdot\nabla\left(  \widetilde{T}%
_{N,\epsilon}+\gamma\right)  dW_{t}^{\mathbf{k}}\\
& =\kappa_{\epsilon}\Delta\widetilde{T}_{N,\epsilon}dt+\kappa_{T}^{\epsilon
}\operatorname{div}\left(  Q_{N,\epsilon}\left(  \mathbf{x},\mathbf{x}\right)
\nabla\left(  \widetilde{T}_{N,\epsilon}+\gamma\right)  \right)  dt.
\end{align*}
Similarly, the limit equation, for the new variable %
\[
\widetilde{T}_{\epsilon}\left(  x,y,t\right)  =T_{\epsilon}\left(
x,y,t\right)  -\gamma\left(  x,y\right)
\]
reads%
\begin{align*}
\partial_{t}\widetilde{T}_{\epsilon}  & =\kappa_{\epsilon}\Delta
\widetilde{T}_{\epsilon}+\kappa_{T}^{\epsilon}\operatorname{div}\left(
\overline{\chi}_{\epsilon}^{2}\nabla\left(  \widetilde{T}_{\epsilon}%
+\gamma\right)  \right)  \\
& =A_{\epsilon}\widetilde{T}_{\epsilon}-\frac{T_{+}\kappa_{T}^{\epsilon}}%
{2}\partial_{x}\overline{\chi}_{\epsilon}^{2},%
\end{align*}
where the operator $A_\epsilon$ has been introduced in Step 0 above.

Setting
\begin{align*}
B_{N,\epsilon}f  & =\kappa_{T}^{\epsilon}\mathcal{L}_{N,\epsilon}f-\kappa
_{T}^{\epsilon}\operatorname{div}\left(  \overline{\chi}_{\epsilon}^{2}\nabla
f\right)  \\
& =\kappa_{T}^{\epsilon}\operatorname{div}\left(  R_{N,\epsilon}\left(
\mathbf{x},\mathbf{x}\right)  \nabla f\right),
\end{align*}
where $\mathcal{L}_{N,\epsilon}$ is defined in \eqref{eq:L},
we may also write the stochastic equation in the form
\begin{align*}
& d\widetilde{T}_{N,\epsilon}\\
& =\left(  A_{\epsilon}\widetilde{T}_{N,\epsilon}-\frac{T_{+}\kappa
_{T}^{\epsilon}}{2}\partial_{x}\overline{\chi}_{\epsilon}^{2}\right)  dt\\
& -\sqrt{\kappa_{T}^{\epsilon}}\sum_{\mathbf{k}\in\mathbb{Z}_{0}^{2}}C\left(
\mathbf{k},N\right)  \mathbf{\sigma}_{\mathbf{k}}^{\epsilon}\left(
x,y\right)  \cdot\nabla\left(  \widetilde{T}_{N,\epsilon}+\gamma\right)
dW_{t}^{\mathbf{k}}+B_{N,\epsilon}\left(  \widetilde{T}_{N,\epsilon}%
+\gamma\right)  dt
\end{align*}
which emphasises the similarities and differences with respect to the limit
equation. Denoting by $e^{tA_{\epsilon}}$ the analytic semigroup generated by
$A_{\epsilon}$, we may write the mild formulation of both equations %
\begin{align*}
\widetilde{T}_{N,\epsilon}\left(  t\right)    & =e^{tA_{\epsilon}%
}\widetilde{T}_{N,\epsilon}\left(  0\right)  -\frac{T_{+}\kappa_{T}^{\epsilon}%
}{2}\int_{0}^{t}e^{\left(  t-s\right)  A_{\epsilon}}\partial_{x}\overline
{\chi}_{\epsilon}^{2}ds\\
& -\sqrt{\kappa_{T}^{\epsilon}}\sum_{\mathbf{k}\in\mathbb{Z}_{0}^{2}}C\left(
\mathbf{k},N\right)  \int_{0}^{t}e^{\left(  t-s\right)  A_{\epsilon}%
}\mathbf{\sigma}_{\mathbf{k}}^{\epsilon}\cdot\nabla\left(  \widetilde{T}%
_{N,\epsilon}\left(  s\right)  +\gamma\right)  dW_{s}^{\mathbf{k}}\\
& +\int_{0}^{t}e^{\left(  t-s\right)  A_{\epsilon}}B_{N,\epsilon}\left(
\widetilde{T}_{N,\epsilon}\left(  s\right)  +\gamma\right)  ds
\end{align*}%
\[
\widetilde{T}_{\epsilon}\left(  t\right)  =e^{tA_{\epsilon}}\widetilde{T}%
_{\epsilon}\left(  0\right)  -\frac{T_{+}\kappa_{T}^{\epsilon}}{2}\int_{0}%
^{t}e^{\left(  t-s\right)  A_{\epsilon}}\partial_{x}\overline{\chi}_{\epsilon
}^{2}ds.
\]
Therefore, for the new variable %
\[
\widetilde{\theta}_{N,\epsilon}\left(  t\right)  :=\widetilde{T}_{N,\epsilon
}\left(  t\right)  -\widetilde{T}_{\epsilon}\left(  t\right)
\]
we have%
\begin{align*}
\widetilde{\theta}_{N,\epsilon}\left(  t\right)    & =-\sqrt{\kappa
_{T}^{\epsilon}}\sum_{\mathbf{k}\in\mathbb{Z}_{0}^{2}}C\left(  \mathbf{k}%
,N\right)  \int_{0}^{t}e^{\left(  t-s\right)  A_{\epsilon}}\mathbf{\sigma
}_{\mathbf{k}}^{\epsilon}\cdot\nabla\left(  \widetilde{T}_{N,\epsilon}\left(
s\right)  +\gamma\right)  dW_{s}^{\mathbf{k}}\\
& +\int_{0}^{t}e^{\left(  t-s\right)  A_{\epsilon}}B_{N,\epsilon}\left(
\widetilde{T}_{N,\epsilon}\left(  s\right)  +\gamma\right)  ds.
\end{align*}
We have to prove that, in the weak sense, this function goes to zero as
$N\rightarrow\infty$, in mean square sense.

\textbf{Step 3}: energy type estimate. From It\^{o} formula we get %
\begin{align*}
d\left\Vert \widetilde{T}_{N,\epsilon}\left(  t\right)  \right\Vert _{L^{2}%
}^{2}  & =-2\sqrt{\kappa_{T}^{\epsilon}}\sum_{\mathbf{k}\in\mathbb{Z}_{0}^{2}%
}C\left(  \mathbf{k},N\right)  \left\langle \mathbf{\sigma}_{\mathbf{k}%
}^{\epsilon}\cdot\nabla\left(  \widetilde{T}_{N,\epsilon}+\gamma\right)
,\widetilde{T}_{N,\epsilon}\right\rangle dW_{t}^{\mathbf{k}}\\
& +2\kappa_{\epsilon}\left\langle \Delta\widetilde{T}_{N,\epsilon
},\widetilde{T}_{N,\epsilon}\right\rangle dt+2\kappa_{T}^{\epsilon
}\left\langle \operatorname{div}\left(  Q_{N,\epsilon}\nabla\left(
\widetilde{T}_{N,\epsilon}+\gamma\right)  \right)  ,\widetilde{T}_{N,\epsilon
}\right\rangle dt\\
& +\kappa_{T}^{\epsilon}\sum_{\mathbf{k}\in\mathbb{Z}_{0}^{2}}C^{2}\left(
\mathbf{k},N\right)  \left\langle \mathbf{\sigma}_{\mathbf{k}}^{\epsilon}%
\cdot\nabla\left(  \widetilde{T}_{N,\epsilon}+\gamma\right)  ,\mathbf{\sigma
}_{\mathbf{k}}^{\epsilon} \cdot\nabla\left(  \widetilde{T}%
_{N,\epsilon}+\gamma\right)  \right\rangle dt.
\end{align*}
Thanks to $\operatorname{div}\mathbf{\sigma}_{\mathbf{k}}^{\epsilon}=0$, we have
\[
\left\langle \mathbf{\sigma}_{\mathbf{k}}^{\epsilon}\cdot\nabla\widetilde{T}%
_{N,\epsilon},\widetilde{T}_{N,\epsilon}\right\rangle =\frac{1}{2}\int%
_{D}\mathbf{\sigma}_{\mathbf{k}}^{\epsilon}\cdot\nabla\widetilde{T}%
_{N,\epsilon}^{2}d\mathbf{x}=-\frac{1}{2}\int_{D}\widetilde{T}_{N,\epsilon}%
^{2}\operatorname{div}\mathbf{\sigma}_{\mathbf{k}}^{\epsilon}d\mathbf{x}=0.
\]
From the
definition of $Q_{N,\epsilon}$  and its
positivity, we get %
\begin{align*}
& d\left\Vert \widetilde{T}_{N,\epsilon}\left(  t\right)  \right\Vert _{L^{2}%
}^{2}+2\kappa_{\epsilon}\left\Vert \nabla\widetilde{T}_{N,\epsilon}\right\Vert
_{L^{2}}^{2}dt\\
& =-2\sqrt{\kappa_{T}^{\epsilon}}\sum_{\mathbf{k}\in\mathbb{Z}_{0}^{2}%
}C\left(  \mathbf{k},N\right)  \left\langle \mathbf{\sigma}_{\mathbf{k}%
}^{\epsilon}\cdot\nabla\gamma,\widetilde{T}_{N,\epsilon}\right\rangle
dW_{t}^{\mathbf{k}}-2\kappa_{T}^{\epsilon}\left\langle Q_{N,\epsilon}\nabla\left(
\widetilde{T}_{N,\epsilon}+\gamma\right)  ,\nabla\widetilde{T}_{N,\epsilon
}  \right\rangle dt\\
& +2\kappa_{T}^{\epsilon}\left\langle Q_{N,\epsilon}\nabla\left(
\widetilde{T}_{N,\epsilon}+\gamma\right)  ,\nabla\left(  \widetilde{T}_{N,\epsilon}+\gamma\right)  \right\rangle dt
\\
& {=}-2\sqrt{\kappa_{T}^{\epsilon}}\sum_{\mathbf{k}\in\mathbb{Z}_{0}^{2}%
}C\left(  \mathbf{k},N\right)  \left\langle \mathbf{\sigma}_{\mathbf{k}%
}^{\epsilon}\cdot\nabla\gamma,\widetilde{T}_{N,\epsilon}\right\rangle
dW_{t}^{\mathbf{k}}+2\kappa_{T}^{\epsilon}\left\langle Q_{N,\epsilon}\nabla\left(
\widetilde{T}_{N,\epsilon}+\gamma\right)  ,\nabla\gamma\right\rangle dt\\ 
& {=}-2\sqrt{\kappa_{T}^{\epsilon}}\sum_{\mathbf{k}\in\mathbb{Z}_{0}^{2}%
}C\left(  \mathbf{k},N\right)  \left\langle \mathbf{\sigma}_{\mathbf{k}%
}^{\epsilon}\cdot\nabla\gamma,\widetilde{T}_{N,\epsilon}\right\rangle
dW_{t}^{\mathbf{k}}-2\kappa_{T}^{\epsilon}\left\langle \operatorname{div}\left(Q_{N,\epsilon}%
\nabla\left(  \widetilde{T}_{N,\epsilon}+\gamma\right) \right), \gamma\right\rangle dt.
\end{align*}
Since%
\[
-2\kappa_{T}^{\epsilon}\left\langle \operatorname{div}\left(  Q_{N,\epsilon
}\nabla\left(  \widetilde{T}_{N,\epsilon}+\gamma\right)  \right)
,\gamma\right\rangle =2\kappa_{T}^{\epsilon}\left\langle Q_{N,\epsilon}%
\nabla\left(  \widetilde{T}_{N,\epsilon}+\gamma\right)  ,\nabla\gamma
\right\rangle
\]%
\begin{align*}
&  \leq2\kappa_{T}^{\epsilon}\left\Vert Q_{N,\epsilon}\right\Vert _{\infty
}\frac{\sqrt{\kappa_{\epsilon}}}{\sqrt{\kappa_{\epsilon}}}\left\Vert
\nabla\widetilde{T}_{N,\epsilon}\right\Vert _{L^{2}}\left\Vert \nabla
\gamma\right\Vert _{L^{2}}+2\kappa_{T}^{\epsilon}\left\Vert Q_{N,\epsilon
}\right\Vert _{\infty}\left\Vert \nabla\gamma\right\Vert _{L^{2}}^{2}\\
&  \leq\kappa_{\epsilon}\left\Vert \nabla\widetilde{T}_{N,\epsilon}\right\Vert
_{L^{2}}^{2}+\frac{1}{\kappa_{\epsilon}}\left(  \kappa_{T}^{\epsilon
}\left\Vert Q_{N,\epsilon}\right\Vert _{\infty}\left\Vert \nabla
\gamma\right\Vert _{L^{2}}\right)  ^{2}+2\kappa_{T}^{\epsilon}\left\Vert
Q_{N,\epsilon}\right\Vert _{\infty}\left\Vert \nabla\gamma\right\Vert _{L^{2}%
}^{2}%
\end{align*}
we get
\begin{align*}
& d\left\Vert \widetilde{T}_{N,\epsilon}\left(  t\right)  \right\Vert _{L^{2}%
}^{2}+\kappa_{\epsilon}\left\Vert \nabla\widetilde{T}_{N,\epsilon}\right\Vert
_{L^{2}}^{2}dt\\
& \leq-2\sqrt{\kappa_{T}^{\epsilon}}\sum_{\mathbf{k}\in\mathbb{Z}_{0}^{2}%
}C\left(  \mathbf{k},N\right)  \left\langle \mathbf{\sigma}_{\mathbf{k}%
}^{\epsilon}\cdot\nabla\gamma,\widetilde{T}_{N,\epsilon}\right\rangle
dW_{t}^{\mathbf{k}}\\
& +\left(  \frac{1}{\kappa_{\epsilon}}\left(  \kappa_{T}^{\epsilon}\left\Vert
Q_{N,\epsilon}\right\Vert _{\infty}\left\Vert \nabla\gamma\right\Vert _{L^{2}%
}\right)  ^{2}+2\kappa_{T}^{\epsilon}\left\Vert Q_{N,\epsilon}\right\Vert
_{\infty}\left\Vert \nabla\gamma
\right\Vert _{L^{2}}^2\right)  dt.
\end{align*}
Recall that $\epsilon>0$ is given, in this theorem; hence $\frac{1}%
{\kappa_{\epsilon}}$ is a given constant (it will diverge to infinity as
$\epsilon\rightarrow0$ in Section \ref{sec:limit hard} only). Therefore, from the boundedness of $\|Q_{N, \epsilon}\|_{\infty}$ we obtain %
\[
\mathbb{E}\left[  \left\Vert \widetilde{T}_{N,\epsilon}\left(  t\right)
\right\Vert _{L^{2}}^{2}\right]  +\kappa_{\epsilon}\mathbb{E}\int_{0}^{t}\left\Vert
\nabla\widetilde{T}_{N,\epsilon}\left(  s\right)  \right\Vert _{L^{2}}%
^{2}ds\leq\mathbb{E}\left[  \left\Vert \widetilde{T}_{\epsilon}\left(
0\right)  \right\Vert _{L^{2}}^{2}\right]  +C_{\epsilon}t
\]
for a suitable constant $C_{\epsilon}>0$. The precise classical strategy of
proof, that we omit, requires first to introduce a stopping time in order to
localize the It\^{o} integral reducing it to a martingale, make the estimate
in average on the stopped processes and finally use Fatou's lemma to remove the localization.

\textbf{Step 4}: convergence of the first element of $\widetilde{\theta
}_{N,\epsilon}$. The statements of the theorem are
equivalent to proving that
\[
\lim_{N\rightarrow\infty}\mathbb{E}\left[  \left\langle \widetilde{\theta
}_{N,\epsilon}\left(  t\right)  ,\phi\right\rangle ^{2}\right]  =0
\]
{for every $\phi\in \mathcal{D}\left( A_{\epsilon}^{2}\right)  $.}

\begin{proof}
First, recalling that $B_{N,\epsilon}f=\kappa_{T}^{\epsilon}\operatorname{div}%
\left(  R_{N,\epsilon}\nabla f\right)  $, we have%
\[
\left\langle \int_{0}^{t}e^{\left(  t-s\right)  A_{\epsilon}}B_{N, \epsilon
}\left(  \widetilde{T}_{N,\epsilon}\left(  s\right)  +\gamma\right)
ds,\phi\right\rangle =-\kappa_{T}^{\epsilon}\int_{0}^{t}\left\langle
R_{N,\epsilon}\nabla\left(  \widetilde{T}_{N,\epsilon}\left(  s\right)
+\gamma\right)  ,\nabla e^{\left(  t-s\right)  A_{\epsilon}}\phi\right\rangle
ds
\]%
\[
\leq\left\Vert R_{N,\epsilon}\right\Vert _{\infty}\sup_{s\in\left[
0,t\right]  }\left\Vert \nabla e^{\left(  t-s\right)  A_{\epsilon}}%
\phi\right\Vert _{L^{2}}\left(  \int_{0}^{t}\left\Vert \nabla\widetilde{T}%
_{N,\epsilon}\left(  s\right)  \right\Vert _{L^{2}}ds+t\left\Vert \nabla
\gamma\right\Vert _{L^{2}}\right)
\]
hence (using the estimate of Step 3)%
\begin{align*}
& \mathbb{E}\left[  \left\langle \int_{0}^{t}e^{\left(  t-s\right)
A_{\epsilon}}B_{N, \epsilon}\left(  \widetilde{T}_{N,\epsilon}\left(  s\right)
+\gamma\right)  ds,\phi\right\rangle ^{2}\right]  \\
& \leq 2\left\Vert R_{N,\epsilon}\right\Vert _{\infty}^{2}\sup_{s\in\left[
0,t\right]  }\left\Vert \nabla e^{\left(  t-s\right)  A_{\epsilon}}%
\phi\right\Vert _{L^{2}}^{2}t\left(  \frac{1}{\kappa_{\epsilon}}%
\mathbb{E}\left[  \left\Vert \widetilde{T}_{\epsilon}\left(  0\right)
\right\Vert _{L^{2}}^{2}\right]  +t\left\Vert \nabla\gamma\right\Vert
_{L^{2}}^{2}+ \frac{C_\epsilon}{\kappa_\epsilon}\right)  .
\end{align*}
{If $\phi\in \mathcal{D}\left( A_{\epsilon}^{2}\right)  $,}\ then $\sup_{s\in\left[  0,t\right]
}\left\Vert e^{\left(  t-s\right)  A_{\epsilon}}\phi\right\Vert _{W^{1,2}}%
^{2}$ is finite {by
property (\ref{regularity inequality}) (which is stronger than what is
required here).} Therefore, it is sufficient to prove that%
\[
\lim_{N\rightarrow\infty}\left\Vert R_{N,\epsilon}\right\Vert _{\infty}^{2}=0.
\]
Inspection into the definition of $R_{N,\epsilon}$\ shows that the terms can be estimated from above as follows%
\[
\frac{1}{c_{N}^{2}}\sum_{\mathbf{k}\in K_{N}\cap\mathbb{Z}_{+}^{2}}\left(
\frac{\mathbf{1}}{\left\vert \mathbf{k}\right\vert }+\frac{\mathbf{1}%
}{\left\vert \mathbf{k}\right\vert ^{2}}\right)  \leq\frac{1}{c_{N}^{2}%
}Card\left(  K_{N}\cap\mathbb{Z}_{+}^{2}\right)  \left(  \frac{\mathbf{1}}%
{N}+\frac{\mathbf{1}}{N^{2}}\right)
\]
which is infinitesimal as $N\rightarrow\infty$, by the definition of $c_{N}$.
This proves the convergence to zero of the first term.
\end{proof}

\textbf{Step 5}: convergence; second element. It remains to prove that
\begin{equation}\label{eq:expect_stoch_integral}
  \lim_{N\rightarrow\infty}\mathbb{E}\left[  \left(  \sum_{\mathbf{k}%
\in\mathbb{Z}_{0}^{2}}C\left(  \mathbf{k},N\right)  \int_{0}^{t}\left\langle
e^{\left(  t-s\right)  A_{\epsilon}}\mathbf{\sigma}_{\mathbf{k}}^{\epsilon
}\cdot\nabla\left(  \widetilde{T}_{N,\epsilon}\left(  s\right)  +\gamma
\right)  ,\phi\right\rangle dW_{s}^{\mathbf{k}}\right)  ^{2}\right]  =0.
\end{equation}

We have
\[
\left\langle e^{\left(  t-s\right)  A_{\epsilon}}\varphi,\phi\right\rangle
=\left\langle \varphi,e^{\left(  t-s\right)  A_{\epsilon}^{\ast}}%
\phi\right\rangle =\left\langle \varphi,e^{\left(  t-s\right)  A_{\epsilon}%
}\phi\right\rangle.
\]
since $A_{\epsilon}$ is selfadjoint. Hence, by the isometry formula, the expected value on the right-hand side of \eqref{eq:expect_stoch_integral} is equal to
\begin{align*}
&  \sum_{\mathbf{k}\in\mathbb{Z}_{0}^{2}}C^{2}\left(  \mathbf{k},N\right)
\int_{0}^{t}\mathbb{E}\left[  \left\langle e^{\left(  t-s\right)  A_{\epsilon
}}\mathbf{\sigma}_{\mathbf{k}}^{\epsilon}\cdot\nabla\left(  \widetilde{T}%
_{N,\epsilon}\left(  s\right)  +\gamma\right)  ,\phi\right\rangle ^{2}\right]
ds\\
&  =\sum_{\mathbf{k}\in\mathbb{Z}_{0}^{2}}C^{2}\left(  \mathbf{k},N\right)
\int_{0}^{t}\mathbb{E}\left[  \left\langle \mathbf{\sigma}_{\mathbf{k}%
}^{\epsilon}\cdot\nabla\left(  \widetilde{T}_{N,\epsilon}\left(  s\right)
+\gamma\right)  ,e^{\left(  t-s\right)  A_{\epsilon}}\phi\right\rangle
^{2}\right]  ds\\
& =\sum_{\mathbf{k}\in\mathbb{Z}_{0}^{2}}C^{2}\left(  \mathbf{k},N\right)
\int_{0}^{t}\mathbb{E}\left[  \left\langle \widetilde{T}_{N,\epsilon}\left(
s\right)  +\gamma,\mathbf{\sigma}_{\mathbf{k}}^{\epsilon}\cdot\nabla
e^{\left(  t-s\right)  A_{\epsilon}}\phi\right\rangle ^{2}\right]  ds.
\end{align*}
Here we use the general identity%
\[
\left\langle \mathbf{\sigma}_{\mathbf{k}}^{\epsilon}\cdot\nabla
f,g\right\rangle =-\left\langle f,\mathbf{\sigma}_{\mathbf{k}}^{\epsilon}%
\cdot\nabla g\right\rangle
\]
which easily follows from $\operatorname{div}\mathbf{\sigma}_{\mathbf{k}%
}^{\epsilon}=0$. 
Let us define  $\mathbf{v}%
_{t}\left(  \mathbf{x},s\right)  =\left(  \nabla e^{\left(  t-s\right)
A_{\epsilon}}\phi\right)  \left(  \mathbf{x}\right)  $ and recall that ${T}_{N,\epsilon}\left(
\mathbf{x},s\right)  =\widetilde{T}_{N,\epsilon}\left(
\mathbf{x},s\right)  +\gamma\left(  \mathbf{x}\right)  $. Then %
\begin{align*}
& \sum_{\mathbf{k}\in\mathbb{Z}_{0}^{2}}C^{2}\left(  \mathbf{k},N\right)
\left\langle \widetilde{T}_{N,\epsilon}\left(  s\right)  +\gamma
,\mathbf{\sigma}_{\mathbf{k}}^{\epsilon}\cdot\nabla e^{\left(  t-s\right)
A_{\epsilon}}\phi\right\rangle ^{2}\\
& =\sum_{\mathbf{k}\in\mathbb{Z}_{0}^{2}}C^{2}\left(  \mathbf{k},N\right)
\int\int {T}_{N,\epsilon}\left( \mathbf{x},s\right)  {T}_{N,\epsilon}\left( \mathbf{x}^{\prime},s\right)
\mathbf{\sigma}_{\mathbf{k}}^{\epsilon}\left(  \mathbf{x}\right)
\cdot\mathbf{v}_{t}\left(  \mathbf{x},s\right)  \quad\mathbf{\sigma
}_{\mathbf{k}}^{\epsilon}\left(  \mathbf{x}^{\prime}\right)  \cdot
\mathbf{v}_{t}\left(  \mathbf{x}^{\prime},s\right)  dxdx^{\prime}\\
& =2\int\int {T}_{N,\epsilon}\left( \mathbf{x},s\right)  {T}_{N,\epsilon}\left( \mathbf{x}^{\prime
},s\right)  \mathbf{v}_{t}\left(  \mathbf{x},s\right)  ^{T}Q_{N,\epsilon
}\left(  \mathbf{x},\mathbf{x}^{\prime}\right)  \mathbf{v}_{t}\left(
\mathbf{x}^{\prime},s\right)  dxdx^{\prime}.
\end{align*}
Introducing the linear bounded operator on $L^{2}\left(  D,\mathbb{R}%
^{2}\right)  $ given by %
\[
\left(  \mathbb{Q}_{N,\epsilon}\mathbf{v}\right)  \left(  \mathbf{x}\right)
=\int Q_{N,\epsilon}\left(  \mathbf{x},\mathbf{x}^{\prime}\right)
\mathbf{v}\left(  \mathbf{x}^{\prime}\right)  d\mathbf{x^{\prime}}%
\]
and the best constant $\delta\left(  N,\epsilon\right)  $ such that
\[
\left\langle \mathbb{Q}_{N,\epsilon}\mathbf{v},\mathbf{v}\right\rangle
_{L^{2}\left(  D,\mathbb{R}^{2}\right)  }\leq\delta\left(  N,\epsilon\right)
\left\Vert \mathbf{v}\right\Vert _{L^{2}\left(  D,\mathbb{R}^{2}\right)  }^{2}%
\]
we have proved until now that%
\begin{align*}
& \mathbb{E}\left[  \left(  \sum_{\mathbf{k}\in\mathbb{Z}_{0}^{2}}C\left(
\mathbf{k},N\right)  \int_{0}^{t}\left\langle e^{\left(  t-s\right)
A_{\epsilon}}\mathbf{\sigma}_{\mathbf{k}}^{\epsilon}\cdot\nabla\left(
\widetilde{T}_{N,\epsilon}\left(  s\right)  +\gamma\right)  ,\phi\right\rangle
dW_{s}^{\mathbf{k}}\right)  ^{2}\right]  \\
& =2\int_{0}^{t}\mathbb{E}\left\langle \mathbb{Q}_{N,\epsilon}\mathbf{v}%
_{t}\left(  s\right)  {T}_{N,\epsilon}\left( s\right)  ,\mathbf{v}_{t}\left(  s\right)
{T}_{N,\epsilon}\left( s\right)  \right\rangle ds\\
& \leq2\delta\left(  N,\epsilon\right)  \int_{0}^{t}\mathbb{E}\left[
\left\Vert \mathbf{v}_{t}\left(  s\right)  {T}_{N,\epsilon}\left( s\right)  \right\Vert
_{L^{2}\left(  D,\mathbb{R}^{2}\right)  }^{2}\right]  ds\\
& \leq2\delta\left(  N,\epsilon\right)  \sup_{s\in\left[  0,t\right]
}\left\Vert \nabla e^{\left(  t-s\right)  A_{\epsilon}}\phi\right\Vert
_{\infty}^{2}\int_{0}^{t}\mathbb{E}\left[  2\left\Vert \widetilde{T}%
_{N,\epsilon}\left(  s\right)  \right\Vert _{L^{2}}^{2}+2\left\Vert
\gamma\right\Vert _{L^{2}}^{2}\right]  ds.
\end{align*}
Since, by (\ref{regularity inequality}), $\sup_{s\in\left[  0,t\right]  }\left\Vert \nabla
e^{\left(  t-s\right)  A_{\epsilon}}\phi\right\Vert _{\infty}^{2}$ is bounded,
the result will be proved if we show that
\begin{equation}
\lim_{N\rightarrow\infty}\delta\left(  N,\epsilon\right)
=0.\label{final item}%
\end{equation}

\textbf{Step 6}: proof of (\ref{final item}). Recall that
\begin{align*}
\mathbf{\sigma}_{\mathbf{k}}^{\epsilon}\left(  \mathbf{x}\right)   &
=\mathbf{\sigma}_{\mathbf{k}}^{\epsilon,0}\left(  \mathbf{x}\right)
+\frac{\nabla^{\perp}\overline{\chi}_{\epsilon}\left(  x\right)  }{\left\vert
\mathbf{k}\right\vert }\sin\left(  \mathbf{k}\cdot\mathbf{x}\right)
\qquad\text{for }\mathbf{k}\in\mathbb{Z}_{+}^{2}\\
\mathbf{\sigma}_{\mathbf{k}}^{\epsilon}\left(  \mathbf{x}\right)   &
=\mathbf{\sigma}_{\mathbf{k}}^{\epsilon,0}\left(  \mathbf{x}\right)
+\frac{\nabla^{\perp}\overline{\chi}_{\epsilon}\left(  x\right)  }{\left\vert
\mathbf{k}\right\vert }\cos\left(  \mathbf{k}\cdot\mathbf{x}\right)
\qquad\text{for }\mathbf{k}\in\mathbb{Z}_{-}^{2}%
\end{align*}
where%
\begin{align*}
\mathbf{\sigma}_{\mathbf{k}}^{\epsilon,0}\left(  \mathbf{x}\right)   &
=\overline{\chi}_{\epsilon}\left(  x\right)  \frac{\mathbf{k}^{\perp}%
}{\left\vert \mathbf{k}\right\vert }\cos\left(  \mathbf{k}\cdot\mathbf{x}%
\right)  \qquad\text{for }\mathbf{k}\in\mathbb{Z}_{+}^{2}\\
\mathbf{\sigma}_{\mathbf{k}}^{\epsilon,0}\left(  \mathbf{x}\right)   &
=-\overline{\chi}_{\epsilon}\left(  x\right)  \frac{\mathbf{k}^{\perp}%
}{\left\vert \mathbf{k}\right\vert }\sin\left(  \mathbf{k}\cdot\mathbf{x}%
\right)  \qquad\text{for }\mathbf{k}\in\mathbb{Z}_{-}^{2}.
\end{align*}
Therefore, we may introduce the covariance function
\[
Q_{N,\epsilon}^{0}\left(  \mathbf{x},\mathbf{x}^{\prime}\right)  =\frac
{1}{2c_{N}^{2}}\sum_{\mathbf{k}\in K_{N}}\mathbf{\sigma}_{\mathbf{k}%
}^{\epsilon,0}\left(  \mathbf{x}\right)  \otimes\mathbf{\sigma}_{\mathbf{k}%
}^{\epsilon,0}\left(  \mathbf{x}^{\prime}\right)
\]
and the associated operator
\[
\left(  \mathbb{Q}_{N,\epsilon}^{0}\mathbf{v}\right)  \left(  \mathbf{x}%
\right)  =\int Q_{N,\epsilon}^{0}\left(  \mathbf{x},\mathbf{x}^{\prime
}\right)  \mathbf{v}\left(  \mathbf{x}^{\prime}\right)  d\mathbf{x^{\prime}}%
\]
and write%
\[
\left\langle \mathbb{Q}_{N,\epsilon}\mathbf{v},\mathbf{v}\right\rangle
_{L^{2}\left(  D,\mathbb{R}^{2}\right)  }=\left\langle \mathbb{Q}_{N,\epsilon
}^{0}\mathbf{v},\mathbf{v}\right\rangle _{L^{2}\left(  D,\mathbb{R}%
^{2}\right)  }+\left\langle \mathbb{R}_{N,\epsilon}^{0}\mathbf{v}%
,\mathbf{v}\right\rangle _{L^{2}\left(  D,\mathbb{R}^{2}\right)  }%
\]
where the operator $\mathbb{R}_{N,\epsilon}^{0}$ is made of a few terms;\ let
us write only one of them as an example: one term of $\left\langle
\mathbb{R}_{N,\epsilon}^{0}\mathbf{v},\mathbf{v}\right\rangle _{L^{2}\left(
D,\mathbb{R}^{2}\right)  }$ is, for instance, \
\[
\int\int\frac{1}{2c_{N}^{2}}\sum_{\mathbf{k}\in K_{N}\cap\mathbb{Z}_{+}^{2}%
}\mathbf{v}\left(  \mathbf{x}\right)  ^{T}\left[  \mathbf{\sigma}_{\mathbf{k}%
}^{\epsilon,0}\left(  \mathbf{x}\right)  \otimes\frac{\nabla^{\perp}%
\overline{\chi}_{\epsilon}\left(  x\right)  }{\left\vert \mathbf{k}\right\vert
}\sin\left(  \mathbf{k}\cdot\mathbf{x}\right)  \right]  \mathbf{v}\left(
\mathbf{x}^{\prime}\right)  d\mathbf{x}d\mathbf{x^{\prime}}.
\]
This term is bounded above, up to a constant, by%
\[
\left(  \frac{1}{c_{N}^{2}}\sum_{\mathbf{k}\in K_{N}\cap\mathbb{Z}_{+}^{2}%
}\frac{1}{\left\vert \mathbf{k}\right\vert }\right)  \left\Vert \mathbf{v}%
\right\Vert _{L^{2}\left(  D,\mathbb{R}^{2}\right)  }^{2}\leq\frac{1}%
{c_{N}^{2}}Card\left(  K_{N}\cap\mathbb{Z}_{+}^{2}\right)  \frac{1}%
{N}\left\Vert \mathbf{v}\right\Vert _{L^{2}\left(  D,\mathbb{R}^{2}\right)
}^{2}%
\]
which is infinitesimal as $N\rightarrow\infty$. Therefore the contribution of
$\left\langle \mathbb{R}_{N,\epsilon}^{0}\mathbf{v},\mathbf{v}\right\rangle
_{L^{2}\left(  D,\mathbb{R}^{2}\right)  }$ to the constant $\delta\left(
N,\epsilon\right)  $ is infinitesimal. It remains to estimate $\left\langle
\mathbb{Q}_{N,\epsilon}^{0}\mathbf{v},\mathbf{v}\right\rangle _{L^{2}\left(
D,\mathbb{R}^{2}\right)  }$. Because the vector fields $\frac{\mathbf{k}^{\perp}%
}{\left\vert \mathbf{k}\right\vert }\cos\left(  \mathbf{k}\cdot\mathbf{x}%
\right)  $, $\frac{\mathbf{k}^{\perp}}{\left\vert \mathbf{k}\right\vert }%
\sin\left(  \mathbf{k}\cdot\mathbf{x}\right)  $ are a subset of a complete
orthonormal system $\left(  \mathbf{e}_{n}\right)  _{n\in\mathbb{N}}$ in
$L^{2}\left(  D,\mathbb{R}^{2}\right)  $, we get that
\begin{align*}
\left\langle \mathbb{Q}_{N,\epsilon}^{0}\mathbf{v},\mathbf{v}\right\rangle
_{L^{2}\left(  D,\mathbb{R}^{2}\right)  }  & =\sum_{\mathbf{k}\in
\mathbb{Z}_{0}^{2}}C^{2}\left(  \mathbf{k},N\right)  \left(  \int\sigma
_{k}^{\epsilon,0}\left(  \mathbf{x}\right)  \cdot\mathbf{v}\left(
\mathbf{x}\right)  dx\right)  ^{2}\\
& \leq\frac{1}{c_{N}^{2}}\sum_{n\in\mathbb{N}}\left\langle \overline{\chi
}_{\epsilon}\mathbf{v},\mathbf{e}_{n}\right\rangle ^{2}=\frac{1}{c_{N}^{2}%
}\left\Vert \overline{\chi}_{\epsilon}\mathbf{v}\right\Vert _{L^{2}\left(
D,\mathbb{R}^{2}\right)  }^{2}\leq\frac{1}{c_{N}^{2}}\left\Vert \overline
{\chi}_{\epsilon}\right\Vert _{\infty}\left\Vert \mathbf{v}\right\Vert
_{L^{2}\left(  D,\mathbb{R}^{2}\right)  }^{2},%
\end{align*}
where again $\frac{1}{c_{N}^{2}}\left\Vert \overline{\chi}_{\epsilon
}\right\Vert _{\infty}$ is infinitesimal as $N\rightarrow\infty$. The proof is complete.

\section{Acknowledgments}
{We thank Eliseo Luongo for useful comments about the regularity results of the Appendix.
}

The research has been funded by the European Union (ERC, NoisyFluid, No.
101053472). Views and opinions expressed are however those of the authors only
and do not necessarily reflect those of the European Union or the European
Research Council. Neither the European Union nor the granting authority can be
held responsible for them.

A.~P. thanks  the Swiss National Science Foundation for partial support  of the paper (grants No IZRIZ0\_226875, No 200020\_200400, No. 200020\_192129).

O.~A. was partially supported by Deutsche Forschungsgemeinschaft (Project No. 548113512).


\begin{thebibliography}{99}

\bibitem {AgrestiLuongo}A. Agresti, E. Luongo, Global well-posedness and
interior regularity of 2D Navier--Stokes equations with stochastic boundary
conditions, Mathematische Annalen 390 (2024), 2727--2766.




\bibitem{ANDREWS}
S. S. Andrews, Accurate particle-based simulation of adsorption, desorption and partial transmission, \textit{Phys. Biol.} \textbf{6} (2010), 046015.

\bibitem{BarlowPitmanYor}
M. T. Barlow, J. W. Pitman, and M. Yor, On Walsh's Brownian motions, \textit{Sém. de Probab. XXIII}, Lecture Notes in Mathematics, vol. 1372, Springer, Berlin, 1989, pp. 275--293.

\bibitem{BobrowskiSNOB1}
A. Bobrowski, From diffusions on graphs to Markov chains via asymptotic state lumping, \textit{Ann. Henri Poincar\'e} \textbf{13} (2012), 1501--1510.

\bibitem{BobrowskiSNOB2}
A. Bobrowski, K. Morawska, From a PDE model to an ODE model of dynamics of synaptic depression, \textit{Discrete \& Continuous Dynamical Systems - Series B} \textbf{17}(7) (2012), 2313--2327.

\bibitem{BorodinSalminen}
A. N. Borodin and P. Salminen, \textit{Handbook of Brownian Motion – Facts and Formulae}, 2nd ed., Birkhäuser, Basel, 2002.

\bibitem{Bressloff}
P. C. Bressloff, Two-dimensional interfacial diffusion model of inhibitory synaptic receptor dynamics, \textit{Proc. R. Soc. A: Math. Phys. Eng. Sci.} \textbf{479}(2274) (2023), 20220831.

\bibitem{ChernyEngelbert2005}
A.~S.~Cherny and H.-J.~Engelbert.
\newblock \emph{Singular Stochastic Differential Equations}.
\newblock Lecture Notes in Mathematics, vol.~1858. Springer, Berlin, Heidelberg, 2005.

\bibitem{Diamond}
P. H. Diamond, S.-I. Itoh, K. Itoh, and T. S. Hahm, Zonal flows in plasma—a review, \textit{Plasma Phys. Control. Fusion} \textbf{47} (2005), R35--R161.

\bibitem{Ding}
S. Ding \textit{et al.}, A high-density and high-confinement tokamak plasma regime for fusion energy, \textit{Nature} \textbf{629} (2024), 555--560.

\bibitem{EngelbertSchmidt}
H.-J. Engelbert and W. Schmidt, Strong Markov continuous local martingales and solutions of one-dimensional stochastic differential equations, III, \textit{Math. Nachr.} \textbf{151} (1991), 149--197.

\bibitem{Fieremans}
E. Fieremans, D. S. Novikov, J. H. Jensen, and J. A. Helpern, Monte Carlo study of a two-compartment exchange model of diffusion, \textit{NMR in Biomedicine} \textbf{23} (2010), 711--724.

\bibitem{FlaLuongo}
F. Flandoli and E. Luongo, \textit{Stochastic Partial Differential Equations in Fluid Mechanics}, Lecture Notes in Mathematics, vol. 2330, Springer, Singapore, 2023.

\bibitem{Galeati}
L. Galeati, On the convergence of stochastic transport equations to a deterministic parabolic one, \textit{Stoch. Partial Differ. Equ.: Anal. Comput.} \textbf{8}(4) (2020), 833--868.

\bibitem{Garbet}
X. Garbet, Y. Idomura, L. Villard, and T. H. Watanabe, Gyrokinetic simulations of turbulent transport, \textit{Nucl. Fusion} \textbf{50} (2010), 043002.

\bibitem{Gikhman_Skorokhod_1968}
I. I. Gikhman and A. V. Skorokhod, \textit{Stochastic Differential Equations}, Springer, 1968.


\bibitem{IkedaWatanabe1981}
N. Ikeda and S. Watanabe,
\textit{Stochastic Differential Equations and Diffusion Processes},
North-Holland Mathematical Library, vol.~24,
North-Holland Publishing Co., Amsterdam; Kodansha Ltd., Tokyo, 1981.

\bibitem{Iksanov2025}
A. Iksanov, A. Marynych, A. Pilipenko, and I. Samoilenko, 
\textit{Locally Perturbed Random Walks}, 
Frontiers in Mathematics, Birkhäuser, Cham, 2025.


\bibitem{ItoMcKean1965}
K. Itô and H. P. McKean Jr.,
\textit{Diffusion Processes and Their Sample Paths},
Grundlehren der mathematischen Wissenschaften, vol.~125,
Academic Press, New York; Springer-Verlag, Berlin (reprinted in 1996 as part of “Classics in Mathematics”), 1965.

\bibitem{KaratzasShreve1991}
I.~Karatzas and S.~E.~Shreve,
\textit{Brownian Motion and Stochastic Calculus}, 2nd~ed.,
Graduate Texts in Mathematics, vol.~113,
Springer, New York, 1991.


\bibitem{Ladyzhenskaya et al}
O.\,A. Ladyzhenskaya, V.\,A. Solonnikov, and N.\,N. Ural’tseva,
\textit{Linear and Quasilinear Equations of Parabolic Type},
Translations of Mathematical Monographs, Vol.~23,
American Mathematical Society, Providence, R.I., 1968.
Translated from the Russian by S. Smith. xi+648 pp.


\bibitem{Lejay2006}
A.~Lejay,
On the constructions of the skew Brownian motion,
\textit{Probability Surveys} \textbf{3} (2006), 413--466.


\bibitem{Lejay}
A. Lejay, The snapping out Brownian motion, \textit{Ann. Appl. Probab.} \textbf{26}(3) (2016), 1727--1742.

\bibitem{ManPil}
V. Mandrekar and A. Pilipenko, On a Brownian motion with a hard membrane, \textit{Stat. Probab. Lett.} \textbf{113} (2016), 62--70.

\bibitem {Pazy}A. Pazy, Semigroups of Linear Operators and Applications to
Partial Differential Equations, Springer, Berlin 1983.

\bibitem{Pilipenko2014}
A. Yu. Pilipenko, \textit{An Introduction to Stochastic Differential Equations with Reflection}, Universitätsverlag Potsdam, Potsdam, 2018.

\bibitem{PilipenkoPrykhodko2015}
A. Pilipenko and Yu. Prykhodko, 
On a limit behavior of a sequence of Markov processes perturbed in a neighborhood of a singular point, 
\textit{Ukrainian Mathematical Journal}, vol. 67, no. 4, 564--583, 2015.


\bibitem{Tanner}
J. E. Tanner, Transient diffusion in a system partitioned by permeable barriers. Application to NMR measurements with a pulsed field gradient, \textit{J. Chem. Phys.} \textbf{69}(4) (1978), 1748--1754.

\end{thebibliography}
\end{document}